\numberwithin{equation}{section}
\newtheorem{theorem}{Theorem}[section]
\newtheorem{lemma}[theorem]{Lemma}
\newtheorem{proposition}[theorem]{Proposition}
\newtheorem{corollary}[theorem]{Corollary}
\theoremstyle{definition}
\newtheorem{remark}[theorem]{Remark}
\newtheorem{definition}[theorem]{Definition}
\DeclareMathOperator{\Lip}{Lip}
\DeclareMathOperator{\ran}{ran}
\DeclareMathOperator{\limit}{limit}
\DeclareMathOperator{\Ann}{Ann}
\DeclareMathOperator{\Real}{Re}
\DeclareMathOperator{\interior}{int}
\newcommand{\n}[1]{ \left\|#1\right\| }
\newcommand{\bign}[1]{ \big\|#1\big\| }
\newcommand{\N}{{\mathbb{N}}}
\newcommand{\R}{{\mathbb{R}}}
\newcommand{\pair}[2]{{\langle #1, #2 \rangle}}
\newcommand{\fbs}{{\mathscr{F}}}
\newcommand{\eps}{{\varepsilon}}
\newcommand{\st}{{\; : \; }}
\renewcommand*{\L}{{\mathcal{L}}} 
\newcommand{\Lrs}[4]{{\L( #1, #2 ; #3, #4)}}
\newcommand{\Lr}[2]{{\Lrs{ #1_1}{#1_2}{#2_1}{#2_2}}}
\newcommand{\lex}{ \Lr{E}{X} }
\newcommand{\lexss}{ \Lr{E}{ X^{**} } }
\newcommand{\X}{{(X_1,X_2)}}
\newcommand{\Y}{{(Y_1,Y_2)}}
\newcommand{\J}{{(J_1,J_2)}}
\newcommand{\E}{{(E_1,E_2)}}
\newcommand{\Xss}{{(X^{**}_1,X^{**}_2)}}
\newcommand\restr[2]{{% we make the whole thing an ordinary symbol
  \left.\kern-\nulldelimiterspace % automatically resize the bar with \right
  #1 % the function
 % \vphantom{\big|} % pretend it's a little taller at normal size
  \right|_{#2} % this is the delimiter
  }}
\author[J.A.~Ch\'avez-Dom\'{\i}nguez]{Javier Alejandro Ch\'avez-Dom\'{\i}nguez}
\address{Department of Mathematics,
University of Oklahoma,
Norman OK , 73019-3103 USA}
\email{jachavezd@ou.edu}
\thanks{The author was partially supported by NSF grant DMS-1400588.}
\subjclass[2010]{Primary: 46B20, Secondary: 46B28, 46A22}
\newcommand{\pbs}{{pair of a Banach space and a subspace}{}} 
\newcommand{\pbss}{{pairs of a Banach space and a subspace}{}}
\title{An Ando-Choi-Effros lifting theorem respecting subspaces}
\begin{document}

\maketitle

\begin{abstract}
We prove a version of the Ando-Choi-Effros lifting theorem respecting subspaces, which in turn relies on Oja's principle of local reflexivity respecting subspaces. To achieve this, we first develop a theory of pairs of $M$-ideals.
As a first consequence we get a version respecting subspaces of the Michael-Pe{\l}czy{\'n}ski extension theorem.
Other applications are related to linear and Lipschitz bounded approximation properties for a pair consisting of a Banach space and a subspace.
We show that in the separable case, the BAP for such a pair is equivalent to the simultaneous splitting of an associated pair of short exact sequences given by a construction of Lusky.
We define a Lipschitz version of the BAP for pairs, and study its relationship to the (linear) BAP for pairs.
The two properties are not equivalent in general, but they are when the pair has an additional Lipschitz-lifting property in the style of Godefroy and Kalton.
We also characterize, in the separable case, those pairs of a metric space and a subset whose corresponding pair of Lipschitz-free spaces has the BAP.
\end{abstract}

\section{Introduction}

Recall that a Banach space $X$ has the \emph{bounded approximation property (BAP)} if there exists $\lambda$ such that for each $\varepsilon>0$ and compact set $K \subset X$, there exists a finite-rank linear map $S : X \to X$ with $\n{S} \le \lambda$ and $\n{S(x)-x} \le \varepsilon$ for each $x \in K$.
Suppose that $X$ is a separable Banach space, and let $(E_n)$ be an increasing sequence of finite-dimensional subspaces of $X$ with dense union.
Define
\begin{align*}
c(E_n)&= \big\{  (x_n)  \st x_n \in E_n \text{ for each } n\in\N, \quad (x_n) \text{ converges}  \big\}\\
c_0(E_n)&= \big\{  (x_n)  \st x_n \in E_n \text{ for each } n\in\N, \quad \lim_n x_n = 0  \big\}
\end{align*}
Let us consider the short exact sequence
\begin{equation}\label{top-sequence}
\xymatrix{
0 \ar[r] & c_0(E_n)\ar[r]^j & c(E_n)\ar[r]^q & X \ar[r] &0
} 
\end{equation}
where $j$ is the natural inclusion and $q(x_n) = \lim_{n\to\infty} x_n$.
It is known that $X$ has BAP if and only if the sequence splits, that is, there is a bounded linear map $L : X \to c(E_n)$ such that $q\circ L = Id_X$ (or equivalently, a bounded linear map $R : c(E_n) \to c_0(E_n)$ such that $R \circ j = Id_{c_0(E_n)}$).
This is explicitly stated, for example, by Borel-Mathurin \cite[Lemma 2.1]{BorelMathurin}; Johnson and Oikhberg give basically the same argument in \cite[Prop. 2.4 and Remark 1]{Johnson-Oikhberg}, and the construction goes back to Lusky \cite[Sec. 3]{Lusky}. 
Analogously the Lipschitz BAP for $X$ corresponds to having a Lipschitz map $X \to c(E_n)$ splitting \eqref{top-sequence} (see \cite[Thm. 2.2]{BorelMathurin}),
and a nice way to prove both of the aforementioned characterizations is the Ando-Choi-Effros lifting theorem.
The main result of this paper is a version of said theorem respecting a subspace,
which we then use to study approximation properties for pairs via simultaneous splittings of the sequence \eqref{top-sequence} and an analogous one corresponding to a subspace. Let us recall the definition of the BAP for a pair (our definition below is not exactly the original one of Figiel, Johnson and Pe{\l}czy{\'n}ski  \cite[Defn. 1.1]{Figiel-Johnson-Pelczynski} but it is routinely checked to be equivalent;
see \cite[Thm. 4.1]{Oja-Treialt} for this and other characterizations).

\begin{definition}
If $Y$ is a subspace of a Banach space $X$,
we say that the pair $(X,Y)$ has the $\lambda$-BAP if, for each $\varepsilon>0$ and compact set $K \subset X$, there exists a finite-rank linear map $S : X \to X$ with $\n{S} \le \lambda$, $\n{S(x)-x} \le \varepsilon$ for each $x \in K$, and $S(Y) \subset Y$.
\end{definition}

Now suppose that $X$ is a separable Banach space, $Y$ is a closed subspace of $X$, $(E_n)$ is an increasing sequence of finite-dimensional subspaces of $X$ with dense union in $X$, and $(F_n)$ is an increasing sequence of finite-dimensional subspaces of $Y$ with dense union in $Y$ such that $F_n \subseteq E_n$ for each $n \in \N$.
We can extend \eqref{top-sequence} to a diagram
\begin{equation}\label{full-diagram}
	\xymatrix{
	0 \ar[r] & c_0(E_n)\ar[r] & c(E_n)\ar[r] & X \ar[r] &0\\
	0 \ar[r] & c_0(F_n)\ar[u] \ar[r] & c(F_n)\ar[u]\ar[r] & Y \ar[u]\ar[r]  &0 
	}
\end{equation}
where the rows are short exact sequences and the vertical arrows are the natural isometric embeddings.
One of our main results below (Theorem \ref{thm-BAP-equivalent-to-simultaneous-splitting}) states that the BAP for the pair $(X,Y)$ corresponds to having a  ``simultaneous lifting'' of the two short exact sequences in \eqref{full-diagram}:
that is, the existence of a map $X \to c(E_n)$ splitting the top sequence in \eqref{full-diagram} and whose restriction to $Y$ splits the bottom sequence in \eqref{full-diagram}.
This follows from Theorem \ref{thm-Ando-Choi-Effros-respecting-subspaces}, a version of the Ando-Choi-Effros Theorem where the lifting respects a subspace.
In order to even be able to state such theorem, we first develop the basics of an accompanying theory of pairs of $M$-ideals. Our main result in this regard is a characterization in terms of intersection properties of balls (Theorem \ref{thm-geo-charac}).
Another crucial ingredient is Oja's principle of local reflexivity respecting subspaces \cite{Oja-PLR-respecting-subspaces}, which we use to develop a version of Dean's identity respecting subspaces (Theorem \ref{thm-PLRRS-a-la-Dean}).

Once we have the Ando-Choi-Effros theorem respecting subspaces, 
as a first consequence we get a version respecting subspaces of the Michael-Pe{\l}czy{\'n}ski extension theorem.
Another one will be the aforementioned characterization of BAP for pairs in terms of ``simultaneous liftings'' for \eqref{full-diagram}.

Afterwards we study a Lipschitz version of the BAP for pairs.
We show that in full generality the version for pairs of the Godefroy-Kalton theorem (that is, the equivalence of BAP and Lipschitz BAP) does not hold.
Nevertheless the equivalence does hold if we introduce an additional hypothesis, a Lipschitz-lifting property respecting subspaces that is always satisfied by Lipschitz-free spaces.
In the last section, we study pairs of a metric space space and a subset whose corresponding pair of Lipschitz-free spaces has the BAP.
In the case of compact metric spaces, we use ideas of Godefroy \cite{Godefroy-Extensions-and-BAP} together with our  Ando-Choi-Effros Theorem respecting subspaces to get a characterization in terms of near-extension operators.
For the more general case of separable metric spaces, we adapt the strategy of Godefroy and Ozawa  \cite{Godefroy-Ozawa} to prove a characterization in terms of a Lipschitz version of the local reflexivity principle respecting subspaces.

%------------------------------------------------------------------
%------------------------------------------------------------------
%------------------------------------------------------------------
\section{Notation and preliminaries}

We say that $\X$ is a \emph{\pbs} if $X_1$ is a Banach space and $X_2 \subseteq X_1$ is a closed subspace of $X_1$.
For two \pbss  {} $\J$ and $\X$, we write $\J \subseteq \X$ when $J_1 \subseteq X_1$ and  $J_2 \subseteq X_2$.
In such a situation, the annihilators $J_1^\perp, X_2^\perp$ (resp. $J_1^{\perp\perp}, X_2^{\perp\perp}$ ) are always taken in $X_1^*$ (resp. $X_1^{**}$) whereas $J_2^\perp$ (resp. $J_2^{\perp\perp}$) is always taken in $X_2^*$ (resp. $X_2^{**}$).
In the rare occasions where we need to specify a different annihilator, we use the notation
$$\Ann(J,X^*) = \{ x^* \in X^* \st x^*(x) = 0 \text{ for all } x\in J \}.$$
%------------
If $A_2 \subset A_1$ and $B_2 \subset B_1$, we write $f : (A_1,A_2) \to (B_1,B_2)$ to mean that $f : A_1 \to B_1$ is a function such that $f(A_2) \subset B_2$.
If $\X$ and $\Y$ are \pbss{}, and $T : \X \to \Y$ is a linear map, 
we say that $T$ is a \emph{simultaneous (linear) isomorphism} if $T$ is a (bounded linear) isomorphism and $T(X_2) = Y_2$.
If instead we assume that $T$ is a Lipschitz bijection with Lipschitz inverse, that will be called a \emph{simultaneous Lipschitz isomorphism}.
By a \emph{paving} of a separable Banach space $X$ we mean an increasing sequence $(E_n)$ of finite-dimensional subspaces of $X$ whose union is dense in $X$.
A paving of a \pbs{} $\X$ is a sequence of \pbss{} $(E_n,F_n)$ such that $(E_n)$ is a paving for $X_1$, $(F_n)$ is a paving for $X_2$ and $F_n \subseteq E_n$ for all $n\in\N$.

For a point $x$ in a Banach space $X$ and $r \ge 0$, we denote by $B(x,r)$ the closed ball
$\{ y \in X \st \n{x-y} \le r \}$.
If the space $X$ needs to be emphasized, we write $B_X(x,r)$. The closed unit ball $B_X(0,1)$ is denoted simply by $B_X$.

For a closed subset $D$ of a compact Hausdorff space $K$, we always denote
$$
J_D = \{ f \in C(K) \st f(x) = 0 \text{ for all } x \in D\}.
$$

All maps between Banach spaces are assumed to be bounded linear maps, unless explicitly stated otherwise.
Let $X$ and $Y$ be Banach spaces.
The space of all bounded linear maps from $X$ to $Y$ is denoted by $\L(X;Y)$.
We say that a bounded linear map $T : X \to Y$ is a \emph{metric surjection} if the image of the open unit ball of $X$ is the open unit ball of $Y$.
We write $X \equiv Y$ to indicate that $X$ and $Y$ are isometrically isomorphic.

A \emph{short exact sequence} is $\xymatrix{0 \ar[r] & Z \ar[r]^A & Y\ar[r]^B &X \ar[r] &0}$ where the image of each arrow coincides with the kernel of the following one. We say that a short exact sequence is \emph{isometric} if $A$ is an isometric embedding and $B$ is a metric surjection. 

Let $\X$ and $\Y$ be \pbss{}, and $T : \X \to \Y$ a bounded linear map. 
A linear map $R : \Y \to \X$ is called a \emph{simultaneous retraction for $(T,\restr{T}{X_2})$} if $R \circ T = Id_{X_1}$
(note that in this case, $\restr{R}{Y_2} \circ \restr{T}{X_2} = Id_{X_2}$).
A linear map  $L : \Y \to \X$ is called a \emph{simultaneous lifting for $(T,\restr{T}{X_2})$} if $T \circ L = Id_{Y_1}$
(note that in this case, $\restr{T}{X_2} \circ \restr{L}{Y_2}  = Id_{Y_2}$).
If the map $L$ is only assumed to be Lipschitz, it is called a \emph{simultaneous Lipschitz lifting}.
If the maps $(T,\restr{T}{X_2})$ are clear from the context, we will not mention them explicitly.

\begin{definition}
By a \emph{pair of short exact sequences} we mean a commutative diagram of the form
\begin{equation}\label{eqn-pair-exact-sequences}
\xymatrix{
	0 \ar[r] & Z_1 \ar[r]^{A_1} & Y_1 \ar[r]^{B_1} &X_1 \ar[r]  & 0\\
	0 \ar[r] & Z_2 \ar[r]^{A_2}\ar[u]^R & Y_2 \ar[r]^{B_2}\ar[u]^{S} &X_2 \ar[r] \ar[u]^T  & 0\\
	}
\end{equation}
where every row is exact.

If $R$ and $S$ are isometric embeddings, we say that \eqref{eqn-pair-exact-sequences} is \emph{left-isometric}.
In this case, we say that \eqref{eqn-pair-exact-sequences} admits a simultaneous retraction if there is a simultaneous retraction for $(A_1,A_2)$.
If $S$ and $T$ are isometric embeddings, we say that \eqref{eqn-pair-exact-sequences} is \emph{right-isometric}.
In this case, we say that \eqref{eqn-pair-exact-sequences} admits a simultaneous (Lipschitz) lifting, if there is a simultaneous (Lipschitz) lifting for $(B_1,B_2)$.
If \eqref{eqn-pair-exact-sequences} is both left- and right-isometric, we will simply say it is isometric.
\end{definition}

\begin{remark}\label{sim-lifting-iff-retraction}
The usual arguments show that an isometric pair of short exact sequences admits a simultaneous retraction if and only if it admits a simultaneous lifting; we will call either of these a \emph{simultaneous splitting}.
\end{remark}

Let $X$ be a Banach space. A linear projection $P : X \to X$ is called an \emph{$M$-projection} (resp. \emph{$L$-projection}) if for all $x \in X$ we have $\n{x} = \max\{ \n{Px}, \n{x-Px} \} $ (resp.$ \n{x} = \n{Px} + \n{x-Px}$).
A closed subspace $J \subset X$ is called an \emph{$M$-summand} (resp. \emph{$L$-summand}) if it is the range of an $M$-projection (resp. $L$-projection),
and it is called an \emph{$M$-ideal} if $J^\perp$ is an $L$-summand in $X^*$.
For the general theory of $M$-ideals in Banach spaces, we refer the reader to \cite{Harmand-Werner-Werner}.

We use the convention of having \emph{pointed} metric spaces, i.e. with a designated special point always denoted by $0$.
For a metric space $M$ and a Banach space $X$,
$\Lip_0(M;X)$ denotes the Banach space of Lipschitz functions $T : M \to X$ such that $T(0)=0$, with addition defined pointwise and the Lipschitz constant $\Lip(T)$ as the norm of $T$.
When $X = \R$, we simply write $\Lip_0(M)$ or $M^\#$.
The \emph{Lipschitz-free space} of a metric space $M$, denoted $\fbs(M)$, is the canonical predual of $\Lip_0(M)$, that is, the closed linear subspace of $\Lip_0(X)$ spanned by the evaluation functionals $\delta(x) : f \mapsto f(x)$ for $f \in \Lip_0(M)$ and $x \in M$.
The map $\delta : x \mapsto \delta(x)$ is an isometric embedding of $M$ into $\fbs(M)$. Moreover,
for any Banach space $X$ and any Lipschitz map $T : M \to X$ with $T(0)=0$ there is a unique linear map $\overline{T} : \fbs(M) \to X$ such that $\overline{T} \circ \delta = T$. Furthermore, $\bign{\overline{T}} = \Lip(T)$.
It is because of this universal property that the space $\fbs(X)$ is called the Lipschitz-free space of $M$, or simply the free space of $M$.
This concept goes back to \cite{Arens-Eells-56}, see \cite{Weaver} for a thorough study.
Lipschitz-free spaces have been recently used as tools in nonlinear Banach space theory, see \cite{Godefroy-Kalton-03,Kalton-04} and the survey \cite{Godefroy-Lancien-Zizler}.
In this context, the non-linear map $\delta$ has a linear left inverse \cite[Lemma 2.4]{Godefroy-Kalton-03}:
if $\mu$ is a measure with finite support on the Banach space $X$, we can define its barycenter as $\beta(\mu) = \int x d\mu(x)$; since such measures can be identified with a dense subset of $\fbs(X)$, $\beta$ extends to a norm-one linear operator $\beta_X : \fbs(X) \to X$ that we call the \emph{barycentric map}.

%------------------------------------------------------------------
%------------------------------------------------------------------
%------------------------------------------------------------------
\section{Simultaneous $M$-ideals}

In order to state our version of the Ando-Choi-Effros theorem respecting subspaces, we need to develop a corresponding theory of $M$-ideals. In this case the subspaces will not be respected by the $M$-ideals, but rather by the associated $L$- and $M$-projections. In order to avoid awkward terminology, we have then chosen to use the word ``simultaneous'' when talking about the ideals.

\begin{definition}
Let $\X$ be a \pbs. We say that a linear projection $P$ on $X_1$ is a \emph{simultaneous $M$- (resp. $L$-) projection for $\X$} if
$P:X_1 \to X_1$ is an $M$- (resp. $L$-) projection and $P(X_2) \subseteq X_2$.
\end{definition}

Our choice of terminology is justified by the fact that in this case  $\restr{P}{X_2} : X_2 \to X_2$ is clearly also an $M$- (resp. $L$-) projection.

\begin{definition}
Let $\J \subseteq \X$ be \pbss.
\begin{enumerate}[(a)]
\item
We say that $\J$ is a \emph{simultaneous $M$- (resp. $L$-)  summand in $\X$} if there is a simultaneous $M$- (resp. $L$-) projection $P: X_1 \to X_1$ for $\X$ with $P(X_1) = J_1$ and $P(X_2) = J_2$. 
\item
We say that $\J$ is a \emph{simultaneous $M$-ideal for $\X$} if $J_1$ is an $M$-ideal in $X_1$ and $J_2$ is an $M$-ideal in $X_2$, with associated $L$-projections $Q_1 : X_1^* \to J_1^\perp$ and $Q_2 : X_2^* \to J_2^\perp$ such that the diagram
\begin{equation}\label{diagram-simultaneous-M-ideals-L-projections}
	\xymatrix{
	X_1^* \ar[r]^{Q_1} \ar[d]_{r} & J_1^\perp \ar[d]^{\restr{r}{J_1^\perp}} \\
	X_2^* \ar[r]^{Q_2} & J_2^\perp 
	}
\end{equation}
commutes, where $r : X_1^* \to X_2^*$ is the restriction map.
In this case, we say that the projections $Q_1$ and $Q_2$ are \emph{compatible}.
\end{enumerate}
\end{definition}

\begin{remark}\label{remark-simultaneous-Mideal}
We note two important consequences that will be repeatedly used in the sequel:
\begin{enumerate}[(a)]
\item
For $k=1,2$, by taking $P_k = (Id_{X_k^*} - Q_k)^*$, we get an $M$-projection with range $J_k^{\perp\perp}$.
The diagram \eqref{diagram-simultaneous-M-ideals-L-projections} gives rise to another commutative diagram
\begin{equation}\label{diagram-simultaneous-M-ideals-M-projections}
	\xymatrix{
	X_1^{**}\ar[r]^{P_1}  & J_1^{\perp\perp}  \\
	X_2^{**} \ar[r]^{P_2}\ar[u]^{\iota} & J_2^{\perp\perp} \ar[u]_{\restr{\iota}{J_2^{\perp\perp}}}
	}
\end{equation}
where $\iota : X_2^{**} \to X_1^{**}$ is the canonical inclusion.
Thus there is always a simultaneous $M$-projection associated to a simultaneous $M$-ideal, and in fact $(J_1^{\perp\perp},J_2^{\perp\perp})$ is a simultaneous $M$-summand in $(X_1^{**},X_2^{**})$.

\item
When $\J$ is a simultaneous $M$-ideal for $\X$, there is a canonically associated pair of short exact sequences:
\begin{equation}\label{diagram-simultaneous-M-ideals-isometric-embeddings}
	\xymatrix{
	0 \ar[r] & J_1 \ar[r]^{j_1} & X_1 \ar[r]^{q_1} & X_1/J_1 \ar[r] &0\\
	0 \ar[r] & J_2 \ar[u]^{\iota} \ar[r]^{j_2} & X_2 \ar[u]^{\iota'}\ar[r]^{q_2} & X_2/J_2 \ar[u]^T\ar[r]  &0 
	}
\end{equation}
where $\iota$, $\iota'$, $j_1$ and $j_2$ are the natural inclusion maps (in particular, they are isometric embeddings) and $q_1$, $q_2$ are the canonical quotient maps. $T$ is simply the linear map defined on $X/2/J_2$ which is induced by $q_1 \circ \iota'$, this is well-defined since $J_2 \subset \ker(q_1 \circ \iota')$.
Let us now observe that $T$ is in fact an isometric embedding. This will be the case if and only if its adjoint $(X_1/J_1)^* \to (X_2/J_2)^*$ is a metric surjection, but this map can be canonically identified with the restriction map $\restr{r}{J_1^\perp} : J_1^\perp \to J_2^\perp$. The fact that the latter is a metric surjection follows immediately from the diagram \eqref{diagram-simultaneous-M-ideals-L-projections}, together with the fact that $r : X_1^* \to X_2^*$ is a metric surjection and so is any $L$-projection.
We will call \eqref{diagram-simultaneous-M-ideals-isometric-embeddings} the pair of short exact sequences associated to the simultaneous $M$-ideal.
\end{enumerate}
\end{remark}

The main result of this section is a characterization of simultaneous $M$-ideals in terms of intersection properties, which has the usual advantage of dispensing with the associated $L$-projections. We start by recording a Lemma, a slight refinement of \cite[Lemma I.2.1]{Harmand-Werner-Werner}. The proof is exactly the same, so we omit it.

\begin{lemma}\label{lemma-intersection-M-summands}
Suppose $\J$ is a simultaneous $M$-summand in $\X$, and suppose that $x_1, \dotsc, x_n$ in $X_1$ and positive numbers $r_1, \dotsc, r_n$ satisfy
$$
B(x_i, r_i) \cap J_1 \not= \emptyset \text{ for each }i=1,\dotsc,n \qquad \text{and} \qquad  \bigcap_{i=1}^n B(x_i, r_i) \cap X_2 \not= \emptyset.
$$
Then $\bigcap_{i=1}^n B(x_i, r_i) \cap J_2 \not= \emptyset$.
\end{lemma}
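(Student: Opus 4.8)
The plan is to adapt verbatim the classical argument for $M$-summands, the only new ingredient being that the projection respects the subspace $X_2$. Let $P : X_1 \to X_1$ be the simultaneous $M$-projection witnessing that $\J$ is a simultaneous $M$-summand in $\X$, so that $P(X_1)=J_1$, $P(X_2)=J_2$, and $\n{x}=\max\{\n{Px},\n{x-Px}\}$ for every $x\in X_1$.

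First I would record the basic geometric consequence of the hypothesis $B(x_i,r_i)\cap J_1\neq\emptyset$. Choosing $y_i\in J_1$ with $\n{x_i-y_i}\le r_i$ and using that $Py_i=y_i$ (since $y_i$ lies in the range of $P$), the $M$-projection identity gives
$$
r_i \ge \n{x_i-y_i}=\max\{\n{Px_i-y_i},\,\n{(Id-P)x_i}\},
$$
so in particular $\n{(Id-P)x_i}\le r_i$ for each $i=1,\dotsc,n$.

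Next I would produce the candidate point. Let $z\in X_2$ be the common point furnished by the second hypothesis, so that $\n{x_i-z}\le r_i$ for all $i$, and set $w:=Pz$. Since $z\in X_2$ and $P(X_2)=J_2$, we have $w\in J_2$; this is exactly where the ``simultaneous'' structure is used, and it is the only place where the present statement goes beyond \cite[Lemma I.2.1]{Harmand-Werner-Werner}. It then remains to verify that $w\in B(x_i,r_i)$ for every $i$. Using $P^2=P$ and $(Id-P)P=0$, one computes $P(x_i-w)=Px_i-Pz=P(x_i-z)$ and $(Id-P)(x_i-w)=(Id-P)x_i$, whence
$$
\n{x_i-w}=\max\{\n{P(x_i-z)},\,\n{(Id-P)x_i}\}.
$$
The first term is at most $\n{x_i-z}\le r_i$ because $M$-projections are contractive, and the second is at most $r_i$ by the previous step; thus $\n{x_i-w}\le r_i$, giving $w\in\bigcap_{i=1}^n B(x_i,r_i)\cap J_2$, as desired.

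I do not anticipate a genuine obstacle: the argument is essentially forced once one writes down the $M$-projection identity, and the entire content of the refinement is the one-line observation that $Pz$ stays inside $J_2$. The only point demanding (minor) attention is the bookkeeping that $P$ fixes points of $J_1$ and annihilates the complementary summand, which is precisely what allows both coordinates of the maximum to be controlled by the same $r_i$.
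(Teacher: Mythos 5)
Your proof is correct and is exactly the argument the paper has in mind: the paper omits the proof precisely because it is the classical computation from \cite[Lemma I.2.1]{Harmand-Werner-Werner} applied to the point $w=Pz$, with the single new observation (which you identify correctly) that $P(X_2)=J_2$ forces $w\in J_2$. Nothing to add.
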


We are now ready for the promised characterization, modeled after \cite[Thm. I.2.2]{Harmand-Werner-Werner}.

\begin{theorem}\label{thm-geo-charac}
Let $\J \subseteq \X$ be \pbss, and assume that $J_1$ is an $M$-ideal in $X_1$.
The following are equivalent:
\begin{enumerate}[(i)]
\item \label{thm-geo-Mideal} 
$\J$ is a simultaneous $M$-ideal in $\X$.
%-----------------------------
\item \label{thm-geo-nball}
For all $n\in\N$, whenever  $x_1, \dotsc, x_n$ in $X_1$ and positive numbers $r_1, \dotsc, r_n$ satisfy
$$
B(x_i, r_i) \cap J_1 \not= \emptyset \text{ for each }i=1,\dotsc,n \qquad \text{and} \qquad  \bigcap_{i=1}^n B(x_i, r_i) \cap X_2 \not= \emptyset,
$$
it follows that for every $\eps>0$
$$
\bigcap_{i=1}^n B(x_i, r_i+\eps) \cap J_2 \not= \emptyset.
$$
%-----------------------------
\item \label{thm-geo-3ball}
Same as \eqref{thm-geo-nball} with $n=3$.
%-----------------------------
\item \label{thm-geo-restricted-3ball}
For all $y_1,y_2,y_3 \in B_{J_1}$, all $x \in B_{X_2}$ and $\eps>0$ there is $y \in J_2$ satisfying
$$
\n{x + y_i - y} \le 1+\eps\qquad \text{ for }i=1,2,3.
$$
%-----------------------------
\item \label{thm-geo-strict-nball}
For all $n\in\N$, whenever  $x_1, \dotsc, x_n$ in $X_1$ and positive numbers $r_1, \dotsc, r_n$ satisfy
$$
B(x_i, r_i) \cap J_1 \not= \emptyset \text{ for each }i=1,\dotsc,n \qquad \text{and} \qquad \interior \Big( \bigcap_{i=1}^n B(x_i, r_i) \Big) \cap X_2 \not= \emptyset,
$$
it follows that for every $\eps>0$
$$
\bigcap_{i=1}^n B(x_i, r_i+\eps) \cap J_2 \not= \emptyset.
$$
\end{enumerate}

\end{theorem}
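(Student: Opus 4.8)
The plan is to prove the equivalences through the cycle \eqref{thm-geo-Mideal}$\Rightarrow$\eqref{thm-geo-nball}$\Rightarrow$\eqref{thm-geo-3ball}$\Rightarrow$\eqref{thm-geo-restricted-3ball}$\Rightarrow$\eqref{thm-geo-Mideal}, and then to fold in \eqref{thm-geo-strict-nball} via the trivial implication \eqref{thm-geo-nball}$\Rightarrow$\eqref{thm-geo-strict-nball} together with \eqref{thm-geo-strict-nball}$\Rightarrow$\eqref{thm-geo-restricted-3ball}.

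For \eqref{thm-geo-Mideal}$\Rightarrow$\eqref{thm-geo-nball} I would pass to the biduals. By Remark \ref{remark-simultaneous-Mideal}(a), the hypothesis that $\J$ is a simultaneous $M$-ideal gives that $(J_1^{\perp\perp},J_2^{\perp\perp})$ is a simultaneous $M$-summand in $(X_1^{**},X_2^{**})$. Viewing the balls $B(x_i,r_i)$ inside $X_1^{**}$, the hypotheses of Lemma \ref{lemma-intersection-M-summands} are satisfied there: each ball still meets $J_1\subseteq J_1^{\perp\perp}$, and the common intersection still meets $X_2\subseteq X_2^{**}$. Lemma \ref{lemma-intersection-M-summands} then produces a point $z\in J_2^{\perp\perp}$ lying in $\bigcap_i B_{X_1^{**}}(x_i,r_i)$. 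The remaining task is to descend from this bidual point to an honest point of $J_2$ at the cost of $\eps$; since $z$ lies in the weak${}^*$-closure of $J_2$ (Goldstine) and the distance constraints are governed by finitely many functionals, this is a standard weak${}^*$-approximation / Helly-type argument, exactly as in the scalar case \cite[Thm.~I.2.2]{Harmand-Werner-Werner}.

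The implications \eqref{thm-geo-nball}$\Rightarrow$\eqref{thm-geo-3ball} and \eqref{thm-geo-nball}$\Rightarrow$\eqref{thm-geo-strict-nball} are immediate: \eqref{thm-geo-3ball} is the case $n=3$, and \eqref{thm-geo-strict-nball} has a formally stronger hypothesis than \eqref{thm-geo-nball} with the same conclusion. For \eqref{thm-geo-3ball}$\Rightarrow$\eqref{thm-geo-restricted-3ball}, given $y_1,y_2,y_3\in B_{J_1}$ and $x\in B_{X_2}$, I would apply \eqref{thm-geo-3ball} to the three balls $B(x+y_i,1)$: each contains the point $y_i\in J_1$ (since $\n{(x+y_i)-y_i}=\n{x}\le1$), so it meets $J_1$, while $x\in X_2$ lies in all three (since $\n{(x+y_i)-x}=\n{y_i}\le1$), so the common intersection meets $X_2$. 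The conclusion of \eqref{thm-geo-3ball} yields $y\in J_2$ with $\n{x+y_i-y}\le1+\eps$, which is exactly \eqref{thm-geo-restricted-3ball}. The implication \eqref{thm-geo-strict-nball}$\Rightarrow$\eqref{thm-geo-restricted-3ball} is the same computation after enlarging the radii: replacing $1$ by $1+\eps/2$ makes $x$ an interior point of the common intersection, and \eqref{thm-geo-strict-nball} then gives $y\in J_2$ with $\n{x+y_i-y}\le1+\eps$.

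The heart of the argument, and the main obstacle, is \eqref{thm-geo-restricted-3ball}$\Rightarrow$\eqref{thm-geo-Mideal}. Since $J_2\subseteq J_1$ we have $B_{J_2}\subseteq B_{J_1}$, so restricting \eqref{thm-geo-restricted-3ball} to $y_i\in B_{J_2}$ yields precisely the restricted three-ball property for the pair $(X_2,J_2)$; by the scalar theorem \cite[Thm.~I.2.2]{Harmand-Werner-Werner}, $J_2$ is an $M$-ideal in $X_2$, which furnishes the (necessarily unique) $L$-projection $Q_2:X_2^*\to J_2^\perp$. What remains---and this is the genuinely new difficulty---is to show that $Q_1$ and $Q_2$ are compatible, i.e. that the diagram \eqref{diagram-simultaneous-M-ideals-L-projections} commutes. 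Using the identification $\ker Q_k=\{\phi\st \n{\phi}=\n{\restr{\phi}{J_k}}\}$ of the complementary $L$-summand, compatibility reduces to the claim that the restriction map carries $\ker Q_1$ into $\ker Q_2$; equivalently, that $\n{\restr{\phi}{X_2}}=\n{\restr{\phi}{J_2}}$ whenever $\n{\phi}=\n{\restr{\phi}{J_1}}$, which dually is the statement that the $M$-projection onto $J_1^{\perp\perp}$ in $X_1^{**}$ leaves $X_2^{**}$ invariant. I expect the clean way to establish this is to first upgrade the three-ball property to the full $n$-ball property \eqref{thm-geo-nball}---the standard but delicate scalar induction, adapted to respect $X_2$---and then to read off the invariance of $X_2^{**}$ from the resulting intersection property by a weak${}^*$-limit argument, after which the uniqueness of $M$-projections onto $M$-summands forces the restriction of the $M$-projection of $X_1^{**}$ to agree with that of $X_2^{**}$, giving the compatibility of $Q_1$ and $Q_2$ by duality.
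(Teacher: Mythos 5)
Your treatment of \eqref{thm-geo-Mideal}$\Rightarrow$\eqref{thm-geo-nball}$\Rightarrow$\eqref{thm-geo-3ball}$\Rightarrow$\eqref{thm-geo-restricted-3ball} matches the paper's (bidual $M$-summand plus Lemma \ref{lemma-intersection-M-summands}, then specialization to $x_i=x+y_i$, $r_i=1$), and your way of folding in \eqref{thm-geo-strict-nball} --- trivially from \eqref{thm-geo-nball}, and back into the cycle via \eqref{thm-geo-strict-nball}$\Rightarrow$\eqref{thm-geo-restricted-3ball} after enlarging the radii to $1+\eps/2$ --- is correct and, if anything, tidier than the paper's appeal to an unwritten adaptation of \cite[Thm.~I.2.2]{Harmand-Werner-Werner} for \eqref{thm-geo-nball}$\Leftrightarrow$\eqref{thm-geo-strict-nball}. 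Your reduction of \eqref{thm-geo-restricted-3ball}$\Rightarrow$\eqref{thm-geo-Mideal} to showing that restriction carries $J_1^\#=\{x^*\in X_1^*\st \n{x^*}=\bign{\restr{x^*}{J_1}}\}$ into $J_2^\#$ is also the right target.

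However, that last claim --- the compatibility of $Q_1$ and $Q_2$, which is the only genuinely new content of the theorem --- is not proved in your proposal: it is deferred to a chain of steps that does not hold up. First, there is no ``standard but delicate scalar induction'' upgrading the three-ball property to the $n$-ball property; in \cite[Thm.~I.2.2]{Harmand-Werner-Werner} the $n$-ball property is deduced from the $M$-ideal property via the bidual $M$-summand, not from the three-ball property directly, so you cannot route through \eqref{thm-geo-nball} while still in the middle of proving \eqref{thm-geo-restricted-3ball}$\Rightarrow$\eqref{thm-geo-Mideal}. Second, even granting that the $M$-projection $P_1$ of $X_1^{**}$ onto $J_1^{\perp\perp}$ leaves $X_2^{\perp\perp}$ invariant, its restriction is an $M$-projection with range $J_1^{\perp\perp}\cap X_2^{\perp\perp}$, and uniqueness of the $M$-projection onto a \emph{given} $M$-summand only identifies it with $P_2$ if one also knows $J_1^{\perp\perp}\cap X_2^{\perp\perp}=J_2^{\perp\perp}$; nothing in the hypotheses provides this, since the definition of a simultaneous $M$-ideal does not require $J_2=J_1\cap X_2$. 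The paper instead argues directly on the predual level: writing $x^*=u^*+v^*$ with $u^*\in J_1^\perp$, $v^*\in J_1^\#$, it further splits $v^*=a^*+b^*$ with $a^*\in\Ann(J_2,X_1^*)$ and $b^*$ a Hahn--Banach extension of $\restr{v^*}{J_2}$ (so $b^*\in J_1^\#$ as well), chooses $y_1,y_2\in B_{J_1}$ nearly norming $v^*$ and $-b^*$ on $J_1$, applies \eqref{thm-geo-restricted-3ball} to get $y\in J_2$, and the resulting two-sided estimate forces $\Real\pair{a^*}{x}\le 0$ for all $x\in B_{X_2}$, hence $\restr{a^*}{X_2}=0$ and $\restr{v^*}{X_2}=\restr{b^*}{X_2}\in J_2^\#$. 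Note that this uses \eqref{thm-geo-restricted-3ball} with $y_i$ ranging over all of $B_{J_1}$, not merely $B_{J_2}$; that extra strength is precisely what your sketch never exploits and what makes the compatibility go through. As it stands, the implication \eqref{thm-geo-restricted-3ball}$\Rightarrow$\eqref{thm-geo-Mideal} is a genuine gap in your argument.
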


\begin{proof}
\eqref{thm-geo-Mideal} $\Rightarrow$ \eqref{thm-geo-nball}:
By Remark \ref{remark-simultaneous-Mideal}, $(J_1^{\perp\perp}, J_2^{\perp\perp})$ is a simultaneous $M$-summand in $(X_1^{**},X_2^{**})$.
Consider the corresponding balls $B_{X_1^{**}}(x_i,r_i)$ in $X_1^{**}$; Lemma \ref{lemma-intersection-M-summands} allows us to find
$$
x_0^{**} \in \bigcap_{i=1}^n B_{X_1^{**}}(x_i, r_i) \cap J_2^{\perp\perp}.
$$
The rest of the proof continues as in that of \cite[Thm. I.2.2]{Harmand-Werner-Werner}.

\eqref{thm-geo-nball}  $\Rightarrow$ \eqref{thm-geo-3ball}:
This is just the specialization to the case $n=3$.

\eqref{thm-geo-3ball} $\Rightarrow$ \eqref{thm-geo-restricted-3ball}:
This is just the special case $x_i = x + y_i$, $r_i=1$.
 
\eqref{thm-geo-restricted-3ball} $\Rightarrow$\eqref{thm-geo-Mideal}:
For $k = 1,2$ define
$$
J_k^\# = \{ x^* \in X_k^* \st \n{x^*} = \bign{ \restr{x^*}{J_k} } \}.
$$
From the proof of \cite[Thm. I.2.2]{Harmand-Werner-Werner} it follows that each $x^* \in X_k^*$ can be written in a unique way as $x^* = u^*_{k} + v^*_{k}$ with $u^*_{k} \in J_k^\perp$ and $v^*_{k} \in J_k^\#$, and moreover the map $P_k : x^* \mapsto u^*_{k}$ is an $L$-projection from $X_k^*$ onto $J_k^\perp$.
All that is left to check is that $P_1$ and $P_2$ are compatible.

Define also
$$
J^\# =  \{ x^* \in X_1^* \st \n{x^*} = \bign{ \restr{x^*}{J_2} } \} \subseteq J_1^\#,
$$
and note that any $x^* \in X_1^*$ can be written as $x^* = a^* + b^*$ with $a^* \in \Ann(J_2,X_1^*)$ and $b^* \in J^\#$ (simply take $b^*$ to be a Hahn-Banach extension of $\restr{x^*}{J_2}$, and set $a^* = x^*-b^*$).

Fix $x^* \in X_1^*$, and write it as $x^* = u^* + v^*$ with $u^* \in J_1^\perp$ and $v^* \in J_1^\#$ (that is, $u^* = P_1x^*$).
Now write $v^* = a^* + b^*$ with $a^* \in \Ann(J_2,X_1^*)$ and $b^* \in J^\#$.
Let $x \in B_{X_2}$. Given $\eps>0$, choose $y_1,y_2 \in B_{J_1}$ satisfying
\begin{align*}
\R \ni \pair{v^*}{y_1} &\ge \n{\restr{v^*}{J_1}} - \eps = \n{v^*} - \eps\\
\R \ni -\pair{b^*}{y_2} &\ge \n{\restr{b^*}{J_1}} - \eps = \n{b^*} - \eps\\
\end{align*}
(note we have used that $b^*,v^* \in J_1^\#$).
Using \eqref{thm-geo-restricted-3ball} we can get $y \in J_2$ such that
$$
\n{x + y_i - y} \le 1 +\eps, \quad i=1,2.
$$
Now,
\begin{align*}
(1+\eps)\big( \n{v^*} + \n{b^*} \big) &\ge \big| \pair{v^*}{x+y_1-y} - \pair{b^*}{x+y_2-y} \big| \\
&= \big| \pair{v^*-b^*}{x} + \pair{v^*}{y_1} - \pair{b^*}{y_2} - \pair{v^*- b^*}{y} \big| \\
&\ge \Real \pair{a^*}{x} + \n{v^*} + \n{b^*} - 2\eps
\end{align*}
Letting $\eps$ go to zero we conclude $\pair{a^*}{x} = 0$ for all $x \in X_2$.

Since $x^* = u^* + a^* + b^*$, it follows that
$$
\restr{x^*}{X_2} = \restr{u^*}{X_2} + \restr{a^*}{X_2} + \restr{b^*}{X_2} = \restr{u^*}{X_2} + \restr{b^*}{X_2}.
$$
From $u^* \in J_1^\perp$ it follows that $ \restr{u^*}{X_2} \in J_2^\perp$, and from $b^* \in J^\#$ it follows that $\restr{b^*}{X_2} \in J_2^\#$.
Therefore, $P_2\big( \restr{x^*}{X_2}  \big) = \restr{u^*}{X_2} = \restr{(P_1x^*)}{X_2}$, so the projections are compatible.

\eqref{thm-geo-nball}  $\Leftrightarrow$ \eqref{thm-geo-strict-nball}:
The proof is a straightforward adaptation of the argument in \cite[Thm. I.2.2]{Harmand-Werner-Werner}; we leave out the details since we will not be making use of characterization \eqref{thm-geo-strict-nball} in the rest of this paper.
\end{proof}

\begin{remark}
Though the proof above for Theorem \ref{thm-geo-charac} might give the impression of only making use of \eqref{thm-geo-nball} in the case $n=2$, the case $n=3$ was implicitly used for the implication \eqref{thm-geo-restricted-3ball} $\Rightarrow$\eqref{thm-geo-Mideal} when citing the proof of \cite[Thm. I.2.2]{Harmand-Werner-Werner}.
As pointed out in \cite[Remarks I.2.3]{Harmand-Werner-Werner}, the case $n=2$ is enough to get a nonlinear ``$L$-projection'', but $n=3$ is required in order to get the linearity.
\end{remark}

As a consequence of Theorem \ref{thm-geo-charac}, we get our first (and very useful) example of a simultaneous $M$-ideal.

\begin{corollary}\label{cor-example-simultaneous-c0}
Suppose that $(X,Y)$ is a \pbs{}, with $X$ separable, and let $(E_n,F_n)$ be a paving of $(X,Y)$.
Then $\big( c_0(E_n), c_0(F_n)\big)$ is a simultaneous $M$-ideal in $\big( c(E_n), c(F_n)\big)$.
\end{corollary}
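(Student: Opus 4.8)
The plan is to verify that the pair $\big(c_0(E_n),c_0(F_n)\big)$ satisfies the geometric characterization of Theorem~\ref{thm-geo-charac}, specifically condition~\eqref{thm-geo-restricted-3ball}, which is the most economical to check since it reduces everything to three balls of radius one centered at points of the unit ball of the small $M$-ideal. First I would recall the classical fact that $c_0(E_n)$ is an $M$-ideal in $c(E_n)$: this is essentially the statement that $c_0$ is an $M$-ideal in $c$, transported along the paving, so the hypothesis of Theorem~\ref{thm-geo-charac} that $J_1$ is an $M$-ideal in $X_1$ is satisfied. This is standard $M$-structure theory (see \cite{Harmand-Werner-Werner}) and I would cite it rather than reprove it.

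The heart of the argument is then the explicit construction for condition~\eqref{thm-geo-restricted-3ball}. So I would take $y^{(1)},y^{(2)},y^{(3)} \in B_{c_0(F_n)}$, a sequence $x \in B_{c(E_n)}$, and $\eps>0$, and produce $y \in c_0(F_n)$ with $\n{x + y^{(i)} - y} \le 1+\eps$ for $i=1,2,3$. The key observation is that since each $y^{(i)} \in c_0(F_n)$ tends to $0$, there is an index $N$ beyond which all three sequences $y^{(i)}$ have norm at most $\eps$; I would then define $y$ to agree with an appropriate perturbation on the initial block $\{1,\dots,N\}$ (where I have the freedom to correct the three constraints simultaneously, exactly as one does to show $c_0$ is an $M$-ideal in $c$) and set $y_n = 0$ for $n > N$, which guarantees $y \in c_0(F_n)$. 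On the tail the three inequalities hold because $\n{x_n} \le 1$ and $\n{y^{(i)}_n}\le\eps$; on the initial block one uses that each coordinate lives in the finite-dimensional space $F_n \subseteq E_n$ and that the sup-norm structure lets the correction on a finite initial segment be made without interfering with the convergence of $x$, so that membership $y\in c_0(F_n)$ and the norm bound both hold. Crucially, each $y^{(i)}$ and the needed correction take values in $F_n$, so $y$ lands in $c_0(F_n)$ rather than merely $c_0(E_n)$, which is precisely what condition~\eqref{thm-geo-restricted-3ball} demands.

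The main obstacle I anticipate is bookkeeping rather than conceptual: I must choose the initial block $N$ uniformly for all three $y^{(i)}$ and simultaneously arrange that the tail of $x$ is under control, since $x$ only converges (it need not vanish), so the ``correction on the initial segment'' must be reconciled with the limit of $x$. Concretely, the subtlety is that $y$ must be chosen in $c_0(F_n)$ while $x$ has a nonzero limit, so the natural candidate $y \approx x$ is not available; instead the correction exploits that the three centers $y^{(i)}$ differ only on the initial block and that on that block the $M$-ideal structure of $c_0$ in $c$ provides the required simultaneous approximation. Once the block is fixed, verifying the three norm inequalities coordinate by coordinate is routine. I would present the construction cleanly and relegate the per-coordinate estimates to a brief verification, invoking Theorem~\ref{thm-geo-charac} to conclude that $\big(c_0(E_n),c_0(F_n)\big)$ is a simultaneous $M$-ideal in $\big(c(E_n),c(F_n)\big)$.
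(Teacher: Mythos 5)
Your overall strategy --- verify condition \eqref{thm-geo-restricted-3ball} of Theorem \ref{thm-geo-charac}, after citing the classical fact that $c_0(E_n)$ is an $M$-ideal in $c(E_n)$, and cut at an index $N$ past which the three $c_0$-sequences are small --- is exactly the paper's. But you have swapped the roles of the data in that condition, and the swap breaks the construction. Condition \eqref{thm-geo-restricted-3ball} asks for $y_1,y_2,y_3 \in B_{J_1} = B_{c_0(E_n)}$ and $x \in B_{X_2} = B_{c(F_n)}$; you instead take $y^{(i)} \in B_{c_0(F_n)}$ and $x \in B_{c(E_n)}$. This matters twice over. First, your stated reason that $y$ lands in $c_0(F_n)$ is that ``each $y^{(i)}$ \dots take values in $F_n$'' --- but in the actual condition the $y^{(i)}$ only take values in $E_n$, so that justification is unavailable. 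Second, with your orientation the correction on the initial block would have to be an element of $F_n$ handling a center $x_n \in E_n$, which amounts to a restricted three-ball property of $F_n$ inside $E_n$; a generic finite-dimensional subspace of a finite-dimensional space does not have it, so the statement you set out to verify is false in general.

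Relatedly, you never actually pin down $y$ on the initial block, and your worry that ``the natural candidate $y \approx x$ is not available'' because $x$ has a nonzero limit steers you away from the correct choice. With the roles as in \eqref{thm-geo-restricted-3ball}, the natural candidate works verbatim: choose $N$ with $\n{y_n^i} \le \eps$ for $n \ge N$ and $i=1,2,3$, and set $y_n = x_n$ for $n \le N$, $y_n = 0$ for $n > N$. Then $y \in c_0(F_n)$ precisely because $x \in c(F_n)$ (each coordinate lies in $F_n$ and $y$ is eventually zero; the nonzero limit of $x$ is irrelevant since we truncate). On the block $n \le N$ one gets $\n{x_n + y_n^i - y_n} = \n{y_n^i} \le 1$, and on the tail $\n{x_n + y_n^i - y_n} \le \n{x_n} + \n{y_n^i} \le 1 + \eps$. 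This is the paper's proof. So the gap is not conceptual distance from the right argument, but a misreading of which member of each pair the data belong to, which then forces the vague ``appropriate perturbation'' in place of the explicit truncation of $x$.
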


\begin{proof}
It is well-known that $c_0(E_n)$ is an $M$-ideal in $c(E_n)$, see for example the proof of  \cite[Prop. II.2.3]{Harmand-Werner-Werner}.
An easy adaptation of that argument will give the rest:
given sequences of norm at most one $x = (x_n) \in c(F_n)$ and $y_i = (y_n^{i}) \in c_0(E_n)$ ($i=1,2,3$), and $\eps>0$, choose $N\in\N$ such that $\n{y_n^i} \le \eps$ for $n \ge N$ and $i=1,2,3$.
If we define $y = (y_n)$ by $y_n = x_n$ for $n \le N$ and $y_n = 0$ for $n>N$, it follows that $y \in c_0(F_n)$ and
$\n{x + y_i - y} \le 1+\eps$.
Theorem \ref{thm-geo-charac} now gives the desired conclusion.
\end{proof}

\begin{remark}
The example in Corollary \ref{cor-example-simultaneous-c0} shows that in order for $\J$ to be a simultaneous $M$-ideal in $\X$, it is not necessary to have $J_2 = J_1 \cap X_2$.
\end{remark}

 Before we present our next example of a simultaneous $M$-ideal, we recall a definition.
 
\begin{definition}
Let $K$ be a compact Hausdorff space.
Suppose $X$ is a closed subspace of $C(K)$, and $D \subseteq K$ is closed.
We define $\restr{X}{D}$ to be the space of all restrictions $\{ \restr{f}{D} \st f \in X\}$.
We say that $(\restr{X}{D}, X)$ has the \emph{bounded extension property} \cite{Michael-Pelczynski} if there exists a constant $C$ such that, given $f \in \restr{X}{D}$, $\eps>0$ and an open set $U \supset D$, there is some $F \in X$ such that
$$
\restr{F}{D} = f, \qquad \n{F} \le C \n{f}, \qquad | f(x) | \le \eps \text{ for } x \not\in U.
$$
 \end{definition}

\begin{corollary}\label{cor-joint-bounded-extension-property}
Let $K$ be a compact Hausdorff space.
Suppose $X_2 \subseteq X_1$ are closed subspaces of $C(K)$, and $D \subseteq K$ is closed.
If $(\restr{X_1}{D}, X_1)$ and $(\restr{X_2}{D}, X_2)$ both have the bounded extension property,
then $(J_D\cap X_1, J_D \cap X_2)$ is an $M$-ideal in $(X_1,X_2)$.
\end{corollary}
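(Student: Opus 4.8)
The plan is to deduce this from the geometric characterization in Theorem \ref{thm-geo-charac}; throughout write $J_k = J_D \cap X_k$ for $k=1,2$. First I would record that each $J_k$ is an $M$-ideal in $X_k$: this is precisely the conclusion of the Michael--Pe{\l}czy{\'n}ski extension theorem \cite{Michael-Pelczynski} (see also \cite{Harmand-Werner-Werner}), whose hypothesis is the bounded extension property of $(\restr{X_k}{D}, X_k)$. In particular $J_1$ is an $M$-ideal in $X_1$, so the standing hypothesis of Theorem \ref{thm-geo-charac} is satisfied, and it remains only to verify the restricted three-ball property \eqref{thm-geo-restricted-3ball} for the pair $\J \subseteq \X$.

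So fix $y_1,y_2,y_3 \in B_{J_1}$, $x \in B_{X_2}$ and $\eps>0$. Since each $y_i$ is continuous and vanishes on $D$, I would first choose an open set $U \supseteq D$ with $|y_i(t)| \le \eps$ for all $t \in U$ and $i=1,2,3$. The key step is then to produce an extension $G \in X_2$ of $\restr{x}{D}$ that is simultaneously \emph{almost norm-preserving} and \emph{small off $U$}: concretely, $\restr{G}{D} = \restr{x}{D}$, $\n{G} \le 1+\eps$, and $|G(t)| \le \eps$ for $t \notin U$. Granting this, set $y := x - G$. Then $y \in X_2$ and $\restr{y}{D} = \restr{x}{D} - \restr{G}{D} = 0$, so $y \in J_2$, and since $x + y_i - y = y_i + G$ the required estimate is a pointwise check: on $U$ one has $|y_i + G| \le \eps + (1+\eps)$, while off $U$ one has $|y_i + G| \le 1 + \eps$; hence $\n{x+y_i-y} \le 1+2\eps$ for $i=1,2,3$. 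After rescaling $\eps$ this is exactly \eqref{thm-geo-restricted-3ball}, and Theorem \ref{thm-geo-charac} then yields that $\J$ is a simultaneous $M$-ideal in $\X$. (Consistently, taking the $y_i$ in $B_{J_2} \subseteq B_{J_1}$ recovers the three-ball property for $(J_2,X_2)$, so $J_2$ is indeed an $M$-ideal in $X_2$.)

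The main obstacle is the construction of the extension $G$ with norm bound $1+\eps$ rather than merely $\le C\n{\restr{x}{D}}$. The bounded extension property of $(\restr{X_2}{D}, X_2)$ supplies, for the given $U$, an extension $G_0 \in X_2$ with $\restr{G_0}{D} = \restr{x}{D}$, $|G_0| \le \eps$ off $U$, and $\n{G_0} \le C$; the overshoot of $G_0$ can occur on $U \setminus D$, where the three-ball estimate would fail. Improving the constant from $C$ down to $1+\eps$ while keeping $G$ small off $U$ is the delicate point, and is carried out exactly as in the proof of the Michael--Pe{\l}czy{\'n}ski theorem for $X_2$ (this is the same constant-improvement that underlies the single-space statement that $J_2$ is an $M$-ideal). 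Since that entire construction takes place inside $X_2$ and the functions $y_1,y_2,y_3$ enter only through their smallness on $U$, the fact that they are drawn from the larger ideal $J_1$ causes no difficulty; this is the sole pair-specific point, and it is what makes the simultaneous (rather than merely separate) $M$-ideal structure fall out.
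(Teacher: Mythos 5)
Your proposal is correct and takes essentially the same route as the paper: the paper also reduces the statement to verifying condition \eqref{thm-geo-restricted-3ball} of Theorem \ref{thm-geo-charac} by adapting the argument of \cite[Proof of Prop.~I.1.20]{Harmand-Werner-Werner}, with the extension construction carried out inside $X_2$ while the $y_i$ are drawn from $J_D \cap X_1$. You have merely spelled out more of the details (the reduction to an almost norm-preserving extension $G \in X_2$ of $\restr{x}{D}$ that is small off $U$) than the paper, which leaves the whole adaptation as a citation.
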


\begin{proof}
It follows from \cite[Prop. I.1.20]{Harmand-Werner-Werner} that $J_D \cap X_1$ is an $M$-ideal in $X_1$.
A straightforward adaptation of the argument in \cite[Proof of Prop. I.1.20, p. 24]{Harmand-Werner-Werner} shows that condition \eqref{thm-geo-restricted-3ball} in Theorem \ref{thm-geo-charac} is satisfied, and the conclusion follows.
\end{proof}

Our next example generalizes \cite[Ex. VI.4.1]{Harmand-Werner-Werner}

\begin{corollary}
\begin{enumerate}[(a)]
\item
Let $1 < p \le q < \infty$, and let $X \subset \ell_q$ be a subspace isometric to $\ell_q$.
Then $\big( \mathcal{K}(\ell_p, \ell_q), \mathcal{K}(\ell_p, X) \big)$ is a simultaneous $M$-ideal in $\big( \mathcal{L}(\ell_p, \ell_q), \mathcal{L}(\ell_p, X) \big)$.
\item Let $Y$ be any Banach space, and $X \subset c_0$ a subspace isometric to $c_0$.
Then $\big( \mathcal{K}(Y, c_0), \mathcal{K}(Y, X) \big)$ is a simultaneous $M$-ideal in $\big( \mathcal{L}(Y,c_0), \mathcal{L}(Y,X) \big)$.
\end{enumerate}
\end{corollary}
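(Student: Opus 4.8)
The plan is to deduce both statements from Theorem~\ref{thm-geo-charac}. By \cite[Ex.~VI.4.1]{Harmand-Werner-Werner}, the space $\mathcal{K}(\ell_p,\ell_q)$ (resp.\ $\mathcal{K}(Y,c_0)$) is already an $M$-ideal in $\mathcal{L}(\ell_p,\ell_q)$ (resp.\ $\mathcal{L}(Y,c_0)$), so in each case the standing hypothesis on $J_1$ is met and it remains to verify the restricted three-ball condition \eqref{thm-geo-restricted-3ball}. Concretely: given compact $y_1,y_2,y_3$ of norm $\le 1$ mapping into the ambient space, an operator $x$ of norm $\le 1$ mapping into the subspace $X$, and $\eps>0$, I must produce a \emph{compact} $y$ whose range lies \emph{inside $X$} with $\n{x+y_i-y}\le 1+\eps$ for $i=1,2,3$. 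The single new difficulty compared with the classical argument is exactly that $y$ must land in $X$, so the usual ambient range truncation is unavailable and must be replaced by one adapted to $X$.

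First I would transport the coordinate structure into $X$. Fixing a surjective isometry $U$ from $\ell_q$ (resp.\ $c_0$) onto $X$ and letting $Q_N$ be the first-$N$ coordinate projection on the model space, the maps $\pi_N=UQ_NU^{-1}$ are finite-rank contractions of $X$ with $\pi_N\to I_X$ strongly and $\n{I_X-\pi_N}\le 1$, and they keep everything inside $X$. In case (a) I would set $y=\pi_N x + xP_M-\pi_N x P_M\in\mathcal{K}(\ell_p,X)$, where $P_M$ is the coordinate projection on the domain $\ell_p$, so that $x-y=(I_X-\pi_N)\,x\,(I-P_M)$ has range in the ``$X$-tail'' $U(I-Q_N)U^{-1}(X)$ and domain in the coordinate tail of $\ell_p$. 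In case (b) the domain $Y$ carries no coordinates, so I would simply take $y=\pi_N x$ and exploit the sup norm.

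The estimate would rest on two facts. Since $U$ is an isometry, the $n$-th ambient coordinate functional composed with $U$ has norm $\le 1$, i.e.\ $\sum_k |(Ue_k)_n|^{q'}\le 1$ (with $q'$ the conjugate exponent, $q'=1$ in case (b)); hence on each fixed ambient coordinate $n$ the $X$-tail contributes at most $(\sum_{k>N}|(Ue_k)_n|^{q'})^{1/q'}\to 0$, so for large $N$ the part of $x-y$ supported on any prescribed finite block of ambient coordinates is uniformly small in the domain variable. On the other side, compactness of the $y_i$ yields a range level $N'$ with $\n{R_{N'}y_i}<\eps$ and, in case (a), a domain level $M$ with $\n{y_i-P_{N'}y_iP_M}<\eps$. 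For $c_0$, a head/tail split of the sup norm at $N'$ closes the argument: on coordinates $\le N'$ the contribution of $x-y$ is $<\eps$ while $\n{y_i\xi}\le 1$, and on coordinates $>N'$ the roles reverse. For $\ell_q$, after discarding the (small) ambient-head piece of $x-y$, the remaining operators $R_{N'}(x-y)$ and $P_{N'}y_iP_M$ have \emph{disjoint domain and disjoint range supports}; here $p\le q$ enters decisively, since for operators with disjoint domain blocks in $\ell_p$ and disjoint range blocks in $\ell_q$ one has $\n{S+T}=\max(\n{S},\n{T})\le 1$.

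I expect the main obstacle to be precisely this reconciliation of two mismatched coordinate systems: the $y_i$ are compact, hence controllable, with respect to the \emph{ambient} coordinates of $\ell_q$ (resp.\ $c_0$), whereas the truncation $\pi_N$ forced on us in order to keep $y$ inside $X$ is adapted to the \emph{intrinsic} coordinates of $X$ and is not an ambient coordinate projection. The bound $\sum_k|(Ue_k)_n|^{q'}\le 1$ is what bridges them, showing the $X$-tail is asymptotically negligible on each ambient coordinate; making this overlap uniformly small and then arranging the simultaneous domain/range disjointness needed to invoke $p\le q$ is where the real work lies.
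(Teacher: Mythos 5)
Your proposal is correct and follows essentially the same route as the paper: both verify condition \eqref{thm-geo-restricted-3ball} of Theorem \ref{thm-geo-charac} using the approximant $Q_nT+TP_m-Q_nTP_m$ (your $\pi_N x+xP_M-\pi_N xP_M$; the paper's display has an apparent sign typo) built from the basis projections of $X$ regarded as $\ell_q$, resp.\ $c_0$. In fact you supply more detail than the paper at the one genuinely new point: the paper simply defers the norm estimate to \cite[Ex.~VI.4.1]{Harmand-Werner-Werner}, whereas your observation that $\sum_k |(Ue_k)_n|^{q'}\le 1$ makes the tail $(I_X-\pi_N)(X)$ asymptotically negligible on each fixed ambient coordinate is precisely the bridge needed to reconcile the intrinsic coordinates of $X$ (in which $\pi_N$ truncates) with the ambient coordinates of $\ell_q$ (in which the compact $y_i$ are truncated), and the remaining assembly via domain/range disjointness and $p\le q$ is routine as you indicate.
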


\begin{proof}
(a) From \cite[Ex. VI.4.1]{Harmand-Werner-Werner}, we have that $\mathcal{K}(\ell_p, \ell_q)$ is an $M$-ideal in $\mathcal{L}(\ell_p, \ell_q)$, so we can apply Theorem \ref{thm-geo-charac}.
Let $S_1, S_2, S_3 \in \mathcal{K}(\ell_p, \ell_q)$ and $T \in \mathcal{L}(\ell_p, X)$ be contractions.
Let $(P_n)$ (resp. $(Q_n)$) be the sequence of projections associated to the unit vector basis of $\ell_p$ (resp. $X$, thought of as $\ell_q$).
Let $\varepsilon>0$ be given.
We will show that for $n,m$ large enough,
$$
\n{ T+ S_i - ( Q_nT - TP_m + Q_n T P_m ) } \le 1 + \varepsilon \qquad \text{ for } i=1,2,3,
$$
which will prove that condition \eqref{thm-geo-restricted-3ball} in Theorem \ref{thm-geo-charac} is satisfied since $Q_nT - TP_m + Q_n T P_m$ is a compact operator on $\ell_p$ with values in $X$.
The rest of the proof goes exactly as in \cite[Ex. VI.4.1]{Harmand-Werner-Werner}.

The proof for (b) is similar but easier: in this case one shows $\n{T+S_i-Q_nT} \le 1 + \varepsilon$ where $(Q_n)$ are the projections associated to the unit vector basis of $X$ (after we identify $X$ with $c_0$).
\end{proof}

%------------------------------------------------------------------
%------------------------------------------------------------------
%------------------------------------------------------------------
\section{Oja's Principle of Local Reflexivity respecting subspaces, \`a la Dean}

Dean's version of the Principle of Local Reflexivity \cite{Dean73} asserts that when $E$ and $X$ are Banach spaces with $E$ finite-dimensional, then $\L(E;X)^{**} \equiv \L(E;X^{**})$ with the identification given by
\begin{equation}\label{eqn-dean-identification}
\varphi \mapsto \big[ \tilde{\varphi} :   e \mapsto \big[ x^* \mapsto   \varphi( e \otimes x^* ) \big] \big].
\end{equation}
Before proving a version respecting subspaces, we need to define the appropriate space of operators.

\begin{definition}
Let $\E$ and $\X$ be \pbss. We define
$$
\lex = \big\{ S \in \L(E_1,X_1) \mid S(E_2) \subseteq X_2 \big\}
$$
Note that this is a closed subspace of $\L(E_1;X_1)$.
\end{definition}

Now we proceed to the main result of this section, a version of Dean's identity respecting subspaces based on Oja's Principle of Local Reflexivity respecting subspaces.

\begin{theorem}\label{thm-PLRRS-a-la-Dean}
Let $\E$ and $\X$ be \pbss, with $E_1$ finite-dimensional.
Then $\lex^{**} \equiv \lexss$, with the identification given by \eqref{eqn-dean-identification}.
\end{theorem}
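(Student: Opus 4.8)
The plan is to deduce the result from the classical Dean identity \eqref{eqn-dean-identification} together with a duality/annihilator argument that pins down the image of $\lex^{**}$ inside $\L(E_1;X_1^{**})$; the local-reflexivity content is already packaged in Dean's theorem, which I would invoke for both the finite-dimensional spaces $E_1$ and $E_2$. Write $W = \L(E_1;X_1)$ and let $\iota : \lex \hookrightarrow W$ be the (isometric) inclusion. First I would observe that the identification in the statement is nothing but the composite $J\circ\iota^{**}$, where $J : W^{**} \equiv \L(E_1;X_1^{**})$ is Dean's isomorphism. Indeed, for $\varphi\in\lex^{**}$ one has $\langle\iota^{**}\varphi,\psi\rangle = \langle\varphi,\restr{\psi}{\lex}\rangle$ for $\psi\in W^*$, so applying $J$ and testing against the rank-one functionals $e\otimes x^* : S\mapsto x^*(Se)$ yields exactly $\tilde\varphi(e)(x^*) = \varphi(\restr{(e\otimes x^*)}{\lex})$, which is \eqref{eqn-dean-identification}. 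Since $\iota$ is an isometric embedding, $\iota^{**}$ is an isometry onto the weak$^*$-closure $\lex^{\perp\perp}\subseteq W^{**}$, and $J$ is an isometric isomorphism; hence $J\circ\iota^{**}$ is an isometric embedding of $\lex^{**}$ into $\L(E_1;X_1^{**})$. The theorem therefore reduces to the single assertion $J(\lex^{\perp\perp}) = \lexss$.

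The inclusion $J(\lex^{\perp\perp})\subseteq\lexss$ is straightforward. Given $\varphi\in\lex^{**}$, I must check that $\tilde\varphi(e)\in X_2^{**} = X_2^{\perp\perp}$ for every $e\in E_2$, i.e. that $\tilde\varphi(e)$ annihilates $X_2^\perp$. But if $x^*\in X_2^\perp$ and $e\in E_2$, then for every $S\in\lex$ we have $Se\in X_2$, so $(e\otimes x^*)(S)=x^*(Se)=0$; thus $\restr{(e\otimes x^*)}{\lex}=0$ and $\tilde\varphi(e)(x^*)=\varphi(0)=0$, as needed.

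The reverse inclusion is the heart of the matter. I would introduce the bounded operator
\[
\Phi : W \to \L(E_2;X_1/X_2), \qquad \Phi(S) = q\circ\restr{S}{E_2},
\]
where $q : X_1\to X_1/X_2$ is the quotient map; by definition $\lex=\ker\Phi$. Since $E_2$ is finite-dimensional and $q$ is a metric surjection (lift a given element of $\L(E_2;X_1/X_2)$ to $\L(E_2;X_1)$ and extend it to $E_1$ via a projection $E_1\to E_2$), the map $\Phi$ is onto, hence has closed range; by the closed range theorem $\lex^\perp=\ran\Phi^*$ is weak$^*$-closed and therefore $\lex^{\perp\perp}=(\ran\Phi^*)^\perp=\ker\Phi^{**}$. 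The plan is now to transport $\Phi^{**}$ through the Dean identifications on both ends. Applying Dean to $E_2$ gives an isometry $J_0 : \L(E_2;X_1/X_2)^{**}\equiv\L(E_2;(X_1/X_2)^{**})$, and the bidual $q^{**}:X_1^{**}\to(X_1/X_2)^{**}$ induces the canonical isometry $(X_1/X_2)^{**}\equiv X_1^{**}/X_2^{**}$ with kernel $X_2^{\perp\perp}=X_2^{**}$. Under these identifications together with $J$, I claim that $\Phi^{**}$ corresponds to
\[
\Psi : \L(E_1;X_1^{**})\to\L(E_2;X_1^{**}/X_2^{**}), \qquad \Psi(T)=\bar q\circ\restr{T}{E_2},
\]
where $\bar q:X_1^{**}\to X_1^{**}/X_2^{**}$ is the quotient map. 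Granting this, $J(\lex^{\perp\perp})=J(\ker\Phi^{**})=\ker\Psi=\{\,T : \bar q\circ\restr{T}{E_2}=0\,\}=\lexss$, which finishes the proof.

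The main obstacle is precisely the commutativity claim, which is a naturality computation. I would verify it by chasing a single element: for $w^{**}\in W^{**}$ with $T=Jw^{**}$, and for $e\in E_2$, $\rho\in(X_1/X_2)^*$, one computes $\Phi^*(e\otimes\rho)=e\otimes(\rho\circ q)$ in $W^*$ (note $\rho\circ q\in X_2^\perp$), whence
\[
J_0(\Phi^{**}w^{**})(e)(\rho)=w^{**}\big(e\otimes(\rho\circ q)\big)=T(e)(\rho\circ q)=\big(q^{**}(T(e))\big)(\rho);
\]
passing to $X_1^{**}/X_2^{**}$ turns $q^{**}(T(e))$ into $\bar q(T(e))$, matching $\Psi(T)(e)$. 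Everything else is bookkeeping: the only delicate points are keeping the several canonical identifications consistent and confirming the closed-range input that yields $\lex^{\perp\perp}=\ker\Phi^{**}$.
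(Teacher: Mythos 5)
Your proof is correct, but for the hard inclusion it takes a genuinely different route from the paper's. Both arguments begin the same way: reduce to showing that, under Dean's isometry $J:\L(E_1;X_1)^{**}\equiv\L(E_1;X_1^{**})$, the bipolar $\lex^{\perp\perp}$ is carried onto $\lexss$, and prove the easy inclusion by testing against $e\otimes x^*$ with $e\in E_2$, $x^*\in X_2^\perp$. For the converse the paper invokes Oja's principle of local reflexivity respecting subspaces to produce operators $S_\alpha\in\lex$ agreeing with a given $\tilde\varphi\in\lexss$ against any prescribed finite-dimensional subspace of $X_1^*$, and deduces that $\tilde\varphi$ annihilates $\lex^\perp$. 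You instead compute $\lex^\perp$ outright: since $\lex=\ker\Phi$ with $\Phi(S)=q\circ\restr{S}{E_2}$ surjective (a lift over the finite-dimensional $E_2$ composed with a projection $E_1\to E_2$), every functional vanishing on $\lex$ factors through $\Phi$ and so lies in $\spa\{e\otimes y^* \st e\in E_2,\ y^*\in X_2^\perp\}$; the identification of $\lex^{\perp\perp}$ with $\{T\in\L(E_1;X_1^{**}) \st T(E_2)\subseteq X_2^{\perp\perp}\}$ then falls out of $\ker q^{**}=X_2^{\perp\perp}$. I have checked the pivotal steps (surjectivity of $\Phi$, the closed-range identity $\ran\Phi^*=(\ker\Phi)^\perp$, the computation of $\Phi^*$ on rank-one functionals, which span because $E_2$ is finite-dimensional) and they all hold. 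What your argument buys is economy: the subspace-respecting Dean identity becomes a formal duality consequence of the classical Dean identity, with no appeal to Oja's theorem, and via Goldstine it even recovers a weak$^*$-density form of the subspace-respecting local reflexivity principle. What the paper's route buys is the stronger exact-agreement content of Oja's principle (agreement on a finite-dimensional subspace of $X_1^*$, fixing elements landing in $X_1$), which your soft argument does not reproduce, and consistency with the classical folklore that local reflexivity implies Dean's identity. One minor wording point: you never need weak$^*$-closedness of $\ran\Phi^*$; the set equality $\lex^\perp=\ran\Phi^*$ already gives $\lex^{\perp\perp}=\ker\Phi^{**}$.
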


\begin{proof}
By Dean's result, $\L(E_1;X_1)^{**} \equiv \L(E_1;X_1^{**})$.
Since $\lex$ is a subspace of $\L(E_1;X_1)$,
$$
\lex^{**} \equiv \lex^{\perp\perp} \subseteq\L(E_1;X_1)^{**}  \equiv  \L(E_1; X_1^{**}).
 $$
Let $\varphi \in \lex^{**}$. We can consider it as a map $\tilde{\varphi} \in \L(E_1; X_1^{**})$, and
moreover $\pair{\varphi}{R^*} = 0$ for any $R^* \in  \lex^{\perp}$.
Let $e \in E_2$ and $x^* \in X_2^\perp \subseteq X_1^*$.
Note that for any $S \in \lex$, since $Se \in X_2$,
$$
\pair{e \otimes x^*}{S} = \pair{Se}{x^*} = 0.
$$
Therefore $e \otimes x^* \in \lex^{\perp}$, and hence
$$
0 = \pair{\varphi}{e \otimes x^*} = \pair{\tilde{\varphi}e}{x^*}.
$$
This shows that for any $e \in E_2$, $\tilde{\varphi}e \in X_2^{\perp\perp} \equiv X_2^{**}$; that is, $\tilde{\varphi} \in \lexss$.
\\
\\
Conversely, assume that we have $\tilde{\varphi} \in \lexss$.
By the Principle of Local Reflexivity respecting subspaces \cite[Thm. 1.2]{Oja-PLR-respecting-subspaces}, for every $\alpha=(\eps,F)$ where $\eps>0$ and $F$ is a finite-dimensional subspace of $X_1^*$,
there exists $S_\alpha \in \lex$ such that
\begin{enumerate}[(i)]
\item $ \n{S_\alpha} \le  (1+\eps) \n{\tilde{\varphi}}$.
\item $\pair{ \tilde{\varphi}e  }{y^*}  = \pair{S_\alpha e}{y^*}$ for every $e \in E$ and $y^* \in F$.
\item$\tilde{\varphi}e = S_\alpha e$ for those $e \in E$ for which $\tilde{\varphi}e \in X_1$.
\end{enumerate}

Now fix $R^* \in  \lex^{\perp}$.
Note that $R^*$, as an element of $\L(E_1;X_1)^*$, defines a map in $\L(E_1^*,X_1^*)$ given by $e^* \mapsto \big[ x \mapsto \pair{R^*}{ e^* \otimes x } \big]$.
Let $F \subseteq X^*$ be the range of $R^*$ considered as the map above.

Note that when $\alpha$ is large enough we have that $\pair{S_\alpha}{R^*} = \pair{ \tilde{\varphi} }{R^*}$, because if we write $R^* = \sum_{j=1}^n e_j \otimes x_j^*$ with $e_j \in E$ and $x_j^* \in X_1^*$, then
$$
\pair{ \tilde{\varphi} }{R^*} = \pair{ \tilde{\varphi} }{\sum_{j=1}^n e_j \otimes x_j^*} = \sum_{j=1}^n \pair{ \tilde{\varphi}e_j }{ x_j^*}
$$
and the latter is equal to $\sum_{j=1}^n \pair{ S_\alpha e_j }{ x_j^*} = \pair{S_\alpha}{R^*}$ for $\alpha$ large enough.
But this is equal to zero because $S_\alpha \in \lex$ and $R^* \in \lex^\perp$,
so we conclude that $\tilde{\varphi} \in \lex^{\perp\perp}$.
\end{proof}

\begin{remark}
Dean originally used the identity $\L(E;X)^{**} \equiv \L(E;X^{**})$ to deduce the Principle of Local Reflexivity \cite{Dean73}, and that is also the approach taken in \cite{Diestel-Jarchow-Tonge, Ryan}.
Here we have gone in the opposite direction, deducing a version of Dean's identity from a version of the Principle of Local Reflexivity. This is well-known folklore in the classical case.
\end{remark}

%------------------------------------------------------------------
%------------------------------------------------------------------
%------------------------------------------------------------------
\section{Ando-Choi-Effros liftings respecting subspaces }

In general given a bounded linear map $T : Y \to X/Z$ it is not possible to find a lifting of $T$ to $X$, i.e. a linear map $L : Y \to X$ such that $q \circ  L = T$ where $q : X \to X/Z$ is the canonical quotient map.
The classical Ando-Choi-Effros Theorem states that in the special case where $Z$ is an $M$-ideal in $X$ and $Y$ has the BAP (or is an $L_1$-predual) such a lifting does exist.
We will prove that in the case of simultaneous $M$-ideals, one can even get a simultaneous lifting.
Our approach follows closely that of \cite[Sec. II.2]{Harmand-Werner-Werner}.

Before stating the results, we need a notion that will play the role of an $L_1$-predual in the context of pairs.
$L_1$-preduals are also known as Lindenstrauss spaces, due to
Lindenstrauss' early work on them including a wealth of different characterizations: 
an excellent reference is \cite[Chap. VI]{Lindenstrauss-memoir}.
Below we introduce the corresponding concept for pairs,
using as definition 
the one property of Lindenstrauss spaces that we need for the purposes of the Ando-Choi-Effros Theorem respecting subspaces.
In a separate paper %\cite{CD-Lindenstrauss} 
we prove other characterizations of such pairs in the style of Lindenstrauss' work, in particular in terms of intersection properties reminiscent of Theorem \ref{thm-geo-charac}.

\begin{definition}
A \pbs{} $\X$ is said to be \emph{$\lambda$-injective} if whenever $(F_1,F_2) \subseteq (E_1,E_2)$ are \pbss{} with $E_2 = E_1 \cap F_2$, and
$t : (F_1,F_2) \to (X_1,X_2)$ is a bounded linear map, there exists a bounded linear extension $T : (E_ 1,E_2) \to (X_1,X_2)$ with $\n{T} \le \lambda \n{t}$.
The pair $\X$ is called a \emph{Lindenstrauss pair} if $\Xss$ is 1-injective.
\end{definition}

It should be noted that Lindenstrauss pairs do exist: a trivial example is to take $X_1 = X_2 \oplus_\infty X_3$, where both $X_2$ and $X_3$ are Lindenstrauss spaces. More general examples are given by the following proposition.

\begin{proposition}
\begin{enumerate}[(i)]
\item\label{LP:individual} If $\X$ is 1-injective then both $X_1$ and $X_2$ are 1-injective, and $X_2$ is 1-complemented in $X_1$.
\item\label{LP:M-ideal-in-L} Let $X_1$ be a Lindenstrauss space, and $X_2$ an $M$-ideal in $X_1$. Then $\X$ is a Lindenstrauss pair.
\end{enumerate}
\end{proposition}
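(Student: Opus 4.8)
The plan is to handle the two parts by different means: part \eqref{LP:individual} is extracted directly from the defining extension property by feeding it three well-chosen test inclusions, while part \eqref{LP:M-ideal-in-L} is proved by decomposing the map to be extended along the $M$-summand structure that $X_2^{**}$ acquires inside $X_1^{**}$.

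For \eqref{LP:individual} I feed the extension property three test inclusions. With trivial subspaces $F_2 = 0 = E_2$: given Banach spaces $F_1 \subseteq E_1$ and $t : F_1 \to X_1$, I regard $t$ as a map $(F_1,0) \to \X$ and extend it along $(F_1,0) \subseteq (E_1,0)$ to obtain $T : E_1 \to X_1$ with $\n{T} \le \n{t}$, so $X_1$ is 1-injective. With the subspace equal to the whole space: given $t : F_1 \to X_2$, I regard it as a map $(F_1,F_1) \to \X$ and extend along $(F_1,F_1) \subseteq (E_1,E_1)$; now the target-subspace requirement forces $T(E_1) \subseteq X_2$, so $T$ exhibits $X_2$ as 1-injective. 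Finally, applying the property to the inclusion $X_2 \hookrightarrow X_1$ seen as a map $(X_2,X_2) \to \X$ and extended along $(X_2,X_2) \subseteq (X_1,X_1)$ produces $P : X_1 \to X_1$ with $\restr{P}{X_2} = Id_{X_2}$, $\n{P} \le 1$, and $P(X_1) \subseteq X_2$ (its domain subspace being all of $X_1$), i.e. a norm-one projection of $X_1$ onto $X_2$. Each of these inclusions induces an isometric quotient map (either $F_1 \hookrightarrow E_1$ or $0 \to 0$), so all three are admissible.

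For \eqref{LP:M-ideal-in-L} I must show $\Xss$ is 1-injective. Since $X_2$ is an $M$-ideal in $X_1$, the bidual $X_2^{**} \equiv X_2^{\perp\perp}$ is an $M$-summand in $X_1^{**}$ (the classical fact that an $M$-ideal becomes an $M$-summand in the bidual; cf. Remark \ref{remark-simultaneous-Mideal}), so $X_1^{**} = X_2^{**} \oplus_\infty W$ with contractive $M$-projections $\pi : X_1^{**} \to X_2^{**}$ and $\rho = Id - \pi : X_1^{**} \to W$. Because $X_1$ is a Lindenstrauss space, $X_1^{**}$ is 1-injective; and as $X_2^{**}$ and $W$ are ranges of contractive projections on $X_1^{**}$, each is 1-complemented in a 1-injective space and hence itself 1-injective. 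Given an admissible inclusion $(F_1,F_2) \subseteq (E_1,E_2)$ and $t : (F_1,F_2) \to \Xss$ with $\n{t} = c$, I split $t = \pi t + \rho t$. The first piece $\pi t : F_1 \to X_2^{**}$ has norm $\le c$ and extends, by 1-injectivity of $X_2^{**}$, to some $T_1 : E_1 \to X_2^{**}$ with $\n{T_1} \le c$. The second piece $\rho t : F_1 \to W$ has norm $\le c$ and vanishes on $F_2$, since $t(F_2) \subseteq X_2^{**} = \ker \rho$; I extend it to $T_2 : E_1 \to W$ which still vanishes on $E_2$, and set $T = T_1 + T_2$ under $X_1^{**} = X_2^{**} \oplus_\infty W$. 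The virtue of the $\oplus_\infty$ splitting is that separate bounds yield the joint bound for free: $\n{Tx} = \max\{\n{T_1 x}, \n{T_2 x}\} \le c\n{x}$. Moreover $\restr{T_2}{E_2} = 0$ forces $T(E_2) \subseteq X_2^{**}$, and $\restr{T}{F_1} = \pi t + \rho t = t$, so $T$ is the desired simultaneous extension.

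The step I expect to be the main obstacle is the controlled extension of the $W$-component $\rho t$. Since it kills $F_2$ while the extension must kill $E_2$, the map descends through the induced arrow $F_1/F_2 \to E_1/E_2$, and the estimate $\n{T_2} \le c$ survives only when this arrow is an \emph{isometric} embedding: in that case $\rho t$ induces a norm-$\le c$ operator on the subspace $F_1/F_2$ of $E_1/E_2$ (using $\n{\rho t (f)} \le c\,\mathrm{dist}(f,F_2) = c\,\mathrm{dist}(f,E_2)$), the 1-injectivity of $W$ extends it to $E_1/E_2$ without increasing the norm, and composing with the quotient map $E_1 \to E_1/E_2$ yields $T_2$. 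Thus everything hinges on the admissibility hypothesis on $(F_1,F_2) \subseteq (E_1,E_2)$ delivering $\mathrm{dist}(f,E_2) = \mathrm{dist}(f,F_2)$ for all $f \in F_1$, that is, the isometry of $F_1/F_2 \to E_1/E_2$. A one-line $\ell_\infty^2$ computation (taking $F_1$ a line almost parallel to $E_2$ but mapped into $W$) shows that if this isometry fails the $W$-component can be forced to have arbitrarily large norm, so pinning down that the compatibility condition is exactly this isometric-quotient requirement is the crux of the argument.
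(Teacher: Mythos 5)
Your argument follows essentially the same route as the paper's: part (i) is proved there by exactly your three test inclusions (trivial subspaces, full subspaces, and $Id_{X_2}$ viewed as a map $(X_2,X_2)\to(X_1,X_2)$), and part (ii) by the same $\ell_\infty$-decomposition of the bidual — the paper obtains $X_1^{**}\equiv X_2^{\perp\perp}\oplus_\infty Y^*$ by dualizing the $L$-decomposition $X_1^*\equiv X_2^\perp\oplus_1 Y$ rather than by quoting the bidual $M$-summand fact, and gets $1$-injectivity of the two summands as duals of $L_1$-spaces rather than as $1$-complemented subspaces of a $1$-injective space, but these are cosmetic differences. Where you add value is precisely where the paper stops: its proof of (ii) ends with ``it is now clear that $\Xss$ is $1$-injective,'' and your componentwise argument (extend the $X_2^{**}$-component freely; factor the $W$-component through $E_1/E_2$ and use $1$-injectivity of $W$ there) is the elided step. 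Your closing diagnosis is also correct and worth recording: that step needs the induced map $F_1/F_2\to E_1/E_2$ to be an isometric embedding. This does not follow from the compatibility condition as printed ($E_2=E_1\cap F_2$, which since $F_2\subseteq E_2\subseteq E_1$ just says $E_2=F_2$ and makes the subspace constraint on the extension vacuous — under that reading part (i) fails), nor from the natural emendation $F_2=F_1\cap E_2$ (under which your $\ell_\infty^2$ example, with $E_2$ the diagonal and $F_1$ a line mapped into $W$, genuinely refutes part (ii)). The isometric-quotient reading you isolate is the one under which both halves of the proposition, and both proofs, are correct; the paper never confronts this because its proof of (ii) does not carry out the extension explicitly.
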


\begin{proof}
Suppose that $\X$ is 1-injective.
Let $E$ be a Banach space, $F \subset E$ a closed subspace and $t : F \to X_1$ a bounded linear map.
Applying the definition of 1-injective with $(E_1,E_2) = (E,\{0\})$ and $(F_1,F_2) = (F, \{0\})$, $t$ has an extension $T : E \to X_1$ with the same norm.
If $s : F \to X_2$ is a bounded linear operator, applying the definition with $(E_1,E_2) = (E,E)$ and $(F_1,F_2) = (F, F)$ gives a bounded linear extension $S : (E,E) \to \X$ with the same norm; note that in fact $S$ is a map from $E$ to $X_2$.
Applying the above argument to the identity map $Id_{X_2} : X_2 \to X_2$ produces a norm one projection from $X_1$ onto $X_2$.

Suppose now that $X_1$ is a Lindenstrauss space and $X_2$ is an $M$-ideal in $X_1$.
Then $X_2^\perp$ is an $L$-summand in $X_1^*$; since the latter is an $L_1$-space, so is the former \cite[Ex. 1.6(a)]{Harmand-Werner-Werner}. By considering the complementary projection, we can decompose $X_1^* \equiv X_2^\perp \oplus_1 Y$ where both $X_2^\perp$ and $Y$ are $L_1$-spaces. It follows that $X_1^{**} \equiv X_2^{\perp\perp} \oplus_\infty Y^*$. Since both $X_2^{\perp\perp}$ and $Y^*$ are 1-injective it is now clear that $\Xss$ is 1-injective and thus $\X$ is a Lindenstrauss pair.
\end{proof}

The heart of the proof of the Ando-Choi-Effros Theorem respecting subspaces is the following preparatory lemma, an adaptation of \cite[Lemma. II.2.4]{Harmand-Werner-Werner}. It deals with the fundamental step of extending a lifting defined on a finite-dimensional space to a larger finite-dimensional space. 

\begin{lemma}\label{lemma-pre-Ando-Choi-Effros}
	Suppose that $\J$ is a simultaneous $M$-ideal in $\X$, and let $q_i : X_i \to X_i/J_i$ be the quotient maps for $i=1,2$.
	Let $(F_1,F_2) \subset (E_1,E_2)$ be \pbss{} with $E_1$ finite-dimensional.
	Let $T : (E_1,E_2) \to (X_1/J_1, X_2/J_2)$ be a linear map with $\n{T} = 1$.
	If either
	\begin{enumerate}[(a)]
	\item\label{preACE:assump:projection} There exists a contractive projection $\pi : (E_1,E_2) \to (E_1,E_2)$ with $F_i = \pi(E_i)$ for $i=1,2$; or
	\item\label{preACE:assump:LS} $\J$ is a Lindenstrauss pair,
	\end{enumerate}
then, given $\eps>0$ and a contractive $L : (F_1,F_2) \to (X_1,X_2)$ such that $ q_1 \circ L = \restr{T}{F_1}$, there exists a contractive
$\tilde{L} : (E_1,E_2) \to (X_1,X_2)$ such that $q_1 \circ \tilde{L} = T$ and $\bign{ \restr{\tilde{L}}{F_1} - L} \le \eps$.
\end{lemma}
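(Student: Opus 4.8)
The plan is to first construct a contractive simultaneous lifting of $T$ with values in the bidual pair $\Xss$, where the relevant ideal has become an $M$-summand and the geometry is transparent, and then to push that lifting down to $\X$ by combining the Dean-type identity $\lex^{**}\equiv\lexss$ of Theorem \ref{thm-PLRRS-a-la-Dean} with Oja's principle of local reflexivity respecting subspaces. Throughout I use Remark \ref{remark-simultaneous-Mideal}(a): the compatible $L$-projections defining $\J$ dualize to compatible $M$-projections $P_1:X_1^{**}\to J_1^{\perp\perp}$ and $P_2:X_2^{**}\to J_2^{\perp\perp}$ with $P_1\iota=\iota P_2$, so that $(J_1^{\perp\perp},J_2^{\perp\perp})$ is a simultaneous $M$-summand in $\Xss$. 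Writing $\sigma_1$ for the isometric section of $q_1^{**}$ with range the complementary summand $(Id-P_1)(X_1^{**})$, the identity $P_1\iota=\iota P_2$ forces $\sigma_1$ to carry $X_2/J_2$-valued data into $X_2^{**}$, so that $\sigma_1$ respects the subspace.

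Step 1, the bidual lifting. Regarding $T$ as valued in the biduals, put $\hat L_0:=\sigma_1\circ T$. This is contractive, satisfies $q_1^{**}\hat L_0=T$, and maps $E_2$ into $X_2^{**}$, but need not agree with $L$ on $F_1$. The defect $D:=L-\restr{\hat L_0}{F_1}$ takes values in $\ker q_1^{**}=J_1^{\perp\perp}$ because $q_1L=\restr{T}{F_1}$; in fact $D=P_1\circ L$, so $D:F_1\to J_1^{\perp\perp}$ is contractive, and $P_1\iota=\iota P_2$ shows $D(F_2)\subseteq J_2^{\perp\perp}$. Thus $D$ is a contractive map of pairs $(F_1,F_2)\to(J_1^{\perp\perp},J_2^{\perp\perp})$. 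If I can extend $D$ to a contractive simultaneous map $\tilde D:(E_1,E_2)\to(J_1^{\perp\perp},J_2^{\perp\perp})$, then $\hat L:=\hat L_0+\tilde D$ works at the bidual level: its two summands lie in complementary $M$-summands, so $\n{\hat Le}=\max\{\n{Te},\n{\tilde De}\}\le\n{e}$, while $q_1^{**}\hat L=T$, $\hat L$ respects the subspace, and $\restr{\hat L}{F_1}=L$. The two hypotheses serve only to produce $\tilde D$. Under assumption (a) I take $\tilde D:=D\circ\pi$, which is contractive, respects the subspace since $\pi(E_i)=F_i$, and restricts to $D$ on $F_1$ because $\restr{\pi}{F_1}=Id$. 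Under assumption (b) the pair $(J_1^{\perp\perp},J_2^{\perp\perp})\equiv(J_1^{**},J_2^{**})$ is $1$-injective by the definition of a Lindenstrauss pair, which yields the contractive simultaneous extension $\tilde D$ directly.

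Step 2, descent to $\X$. By Theorem \ref{thm-PLRRS-a-la-Dean}, $\hat L$ is a norm-one element of $\lex^{**}\equiv\lexss$. I then apply Oja's principle of local reflexivity respecting subspaces to $\hat L$, exactly as in the proof of Theorem \ref{thm-PLRRS-a-la-Dean}, obtaining operators $S\in\lex$ with $\n{S}\le 1+\eps$, with $Se=\hat Le$ whenever $\hat Le\in X_1$ — so that $\restr{S}{F_1}=L$ \emph{exactly}, since $\restr{\hat L}{F_1}=L$ is $X_1$-valued — and with $\pair{Se}{x^*}=\pair{\hat Le}{x^*}$ for any prescribed finite set of functionals $x^*\in X_1^*$, which forces $q_1S$ to be a near-lifting of $T$. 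Finally this near-lifting is adjusted to an exact contractive simultaneous lifting $\tilde L$, with $q_1\tilde L=T$, $\n{\tilde L}\le1$ and $\bign{\restr{\tilde L}{F_1}-L}\le\eps$, by absorbing the small error inside the simultaneous $M$-ideal; this follows the scheme of \cite[Lemma II.2.4]{Harmand-Werner-Werner}, the point being that the simultaneous $M$-ideal structure, in the intersection-of-balls form of Theorem \ref{thm-geo-charac}, provides room to correct within $J_2$ while keeping the norm at most one and preserving the values already fixed on $F_1$.

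The delicate point is precisely this endgame: local reflexivity alone returns an operator that is a lifting only up to a small error and contractive only up to $1+\eps$, and one must reconcile the \emph{exact} constraint $q_1\tilde L=T$ with \emph{exact} contractivity $\n{\tilde L}\le1$. The reconciliation is geometric rather than soft, and is exactly what the ball-intersection property of Theorem \ref{thm-geo-charac} supplies. I expect the main technical burden to be checking that the correcting vectors can be chosen in $J_2$, and not merely in $J_1$, so that the subspace is respected; this is precisely the content of the \emph{simultaneous} intersection property, which is why the theory of Section 3 is needed here.
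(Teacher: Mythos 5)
Your Step 1 is correct, and it is in fact the same construction the paper uses: the paper takes an arbitrary extension $L'$ of $L$ with $q_1\circ L'=T$ (possible since $E_1$ is finite-dimensional) and decomposes it as $\big((Id-P)L'+PL'\pi\big)+PL'(Id-\pi)$, resp.\ $\big((Id-P)L'+\Lambda\big)+(PL'-\Lambda)$ under the Lindenstrauss hypothesis, and the first summand is exactly your $\hat L=\sigma_1 T+\tilde D$. The gap is in Step 2, and it is genuine.

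Oja's principle of local reflexivity, in the form used in Theorem \ref{thm-PLRRS-a-la-Dean}, returns $S\in\lex$ that agrees with $\hat L$ only at the points where $\hat L$ is $X_1$-valued and against \emph{finitely many} functionals; this is weak$^*$-approximation and does \emph{not} force $q_1\circ S$ to be close to $T$ in operator norm. Indeed $q_1Se-Te$ corresponds to $q_1^{**}(Se-\hat Le)$, which is controlled only against the chosen functionals, and the exact constraint $q_1S=T$ amounts to $\pair{Se-L'e}{x^*}=0$ for \emph{all} $x^*\in J_1^\perp$ --- infinitely many conditions that no finite set can enforce. So there is no norm-small error to ``absorb''. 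Moreover, even if there were, the absorption cannot be done by choosing correcting vectors in $J_2$ via Theorem \ref{thm-geo-charac}, because the correction must be a single linear operator on all of $E_1$: what is actually needed is that $W=\Lr{E}{J}$ is an $M$-ideal in the operator space $\lex$, which the paper proves using Theorem \ref{thm-PLRRS-a-la-Dean} together with \cite[Lemma VI.1.1]{Harmand-Werner-Werner} and \cite[Cor. II.3.6]{Harmand-Werner-Werner}, and then \cite[Lemma II.2.5]{Harmand-Werner-Werner} applied in $\lex$. The correct descent mechanism is convexity rather than a pointwise application of local reflexivity: your $\hat L$ witnesses that $L'\in\overline{B}^{w^*}+V^{\perp\perp}=\overline{B+V}^{w^*}$, where $B$ is the unit ball of $\lex$ and $V\subset W$ is the set of operators vanishing on $F_1$; since $B+V$ is convex and $L'\in\lex$, Mazur's theorem gives $L'\in\overline{B+V}$ in norm. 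Subtracting the $V$-component produces an \emph{exact} lifting $L''$ with $\n{L''}\le1+\eps/2$ and $\restr{L''}{F_1}=L$, and only then does the $M$-ideal property of $W$ in $\lex$ allow you to replace $L''$ by a true contraction $\tilde L$ with $\tilde L-L''\in W$ of norm at most $\eps$, which preserves $q_1\tilde L=T$ and perturbs the restriction to $F_1$ by at most $\eps$. The Dean-type identity is what identifies $\overline{B}^{w^*}$ with the unit ball of $\lexss$ and $\overline{V}^{w^*}$ with $V^{\perp\perp}$; that is where local reflexivity respecting subspaces actually enters, not through a direct approximation of $\hat L$.
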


\begin{proof}
The proof is very close to that of \cite[Lemma. II.2.4]{Harmand-Werner-Werner}, but we need to carefully go through it to make sure that everything works with the extra assumption of respecting subspaces.
We start by defining
\begin{align*}
W &= \big\{ S \in \lex \mid \ran(S) \subseteq J_1 \text{ and } \ran( \restr{S}{E_2} ) \subseteq J_2 \} \equiv \Lr{E}{J}	\\
V &= \big\{ S \in W \mid  \ker(S) \supseteq F_1 \text{ and } \ker( \restr{S}{E_2} ) \supseteq F_2 \}.
\end{align*}
	
	Using Theorem \ref{thm-PLRRS-a-la-Dean}, it follows that
\begin{align*}
W^{\perp\perp} &= \big\{ S \in \lexss \mid \ran(S) \subseteq J_1^{\perp\perp} \text{ and } \ran( \restr{S}{E_2} ) \subseteq J_2^{\perp\perp} \} \\
&\equiv \Lr{E}{J^{\perp\perp}}	\\
V^{\perp\perp} &= \big\{ S \in W^{\perp\perp} \mid  \ker(S) \supseteq F_1 \text{ and } \ker( \restr{S}{E_2} ) \supseteq F_2 \}
\end{align*}	
					
	Let us now observe that $W$ is an $M$-ideal in $\lex$.
	By \cite[Lemma. VI.1.1]{Harmand-Werner-Werner}, 
	if $P : X_1^{**} \to J_1^{\perp\perp}$ is the simultaneous $M$-projection associated with $\J$, then $\tilde{P} : S \mapsto P \circ S$ is an $M$-projection on $\lexss$.
	The range of $\tilde{P}$ is obviously contained in $W^{\perp\perp}$, and it is easy to see that the range is in fact all of $W^{\perp\perp}$: take a basis for $E_2$ and complete it to a basis for $E_1$, and use this basis to write a representation of an arbitrary element of $\Lr{E}{J^{\perp\perp}}$.
	Since $W^{\perp\perp}$ is weak$^*$-closed, it follows from \cite[Cor. II.3.6]{Harmand-Werner-Werner} that $\tilde{P}$ is the adjoint of an $L$-projection and therefore $W$ is an $M$-ideal in $\lex$.
	
	Now let $L' \in \Lr{E}{X}$ be any extension of $L$ such that $q_1 \circ L' = T$; this exists because $E_1$ is finite-dimensional, and can be achieved by the same type of completing-the-basis argument as in the previous paragraph.	
	Let $B$ denote the unit ball of $\lex$. We want to prove that
	\begin{equation}\label{L-in-closure}
	L' \in \overline{ B+V },
	\end{equation}
	in order to do so we will consider $L'$ as an element of $\lexss$ and we will show that
	\begin{equation}\label{L-in-weak-star-closure}
	L' \in \overline{ B+V }^{w^*}.
	\end{equation}
	Let us first show \eqref{L-in-weak-star-closure} under assumption \eqref{preACE:assump:projection}.
	If $P$ is as above, decompose $L'$ as
	$$
	L' = \big( (Id_{X^{**}_1} - P) L' + PL' \pi \big) + PL'(Id_{E_1} - \pi).
	$$
	First note that $PL'(Id_{E_1} - \pi) \in V^{\perp\perp}$.
	Clearly $\n{PL' \pi} = \n{PL\pi} \le \n{P} \cdot\n{L} \cdot \n{\pi} \le 1$.
	Since $\ran(Id_{X^{**}_1}-P) \equiv (X_1/J_1)^{**}$, and looking at the diagram
	$$
	\xymatrix{
	E_1 \ar[dr]_{L'}\ar[r]^T &X_1/J_1 \ar[r] & (X_1/J_1)^{**}\\
			&X_1 \ar[u]^{q_1}\ar[r] &X_1^{**} \ar[u]_{Id_{X_1^{**}}-P}
	}
	$$
	it follows that $\n{(Id_{X^{**}_1} - P)L'} = \n{T} = 1$.
	Since $\tilde{P}$ is an $M$-projection, it follows that
	$$
	\n{(Id_{X^{**}_1} - P) L' + PL' \pi} = \max \big\{  \n{Id_{X_1} - P) L'},  \n{PL' \pi} \big\} = 1.
	$$
	Note also that $PL' \pi$ and $(Id_{X^{**}_1} - P) L'$ both belong to $\lexss$.
	Therefore,
	$$
	L' \in B_{\lexss} + V^{\perp\perp} = \overline{B}^{w^*} + \overline{V}^{w^*} = \overline{B+V}^{w^*}.
	$$
	If instead we assume condition \eqref{preACE:assump:LS}, from the definition of a Lindenstrauss pair there exists a contractive bounded linear map $\Lambda \in \Lr{E}{J^{\perp\perp}}$ which is a simultaneous extension for $(PL,\restr{PL}{F_2})$.
	We now decompose $L'$ as
	$$
	L' = \big( (Id_{X^{**}_1} - P) L' + \Lambda) + (PL' - \Lambda)
	$$
	and deduce \eqref{L-in-weak-star-closure} as above.
	
	Now, from \eqref{L-in-closure} there exist $R \in B$ and $S \in V$ such that $\n{ L' - (R + S) } \le \varepsilon/2$.
	Define $L'' = L' - S \in \lex$. Note that $L''$ is a simultaneous lifting for $(T,\restr{T}{E_2})$, since $S \in V \subset W$,
	but it is not guaranteed to be a contraction: we only have $\n{L''} \le (1+\varepsilon/2)$.
	We would like to perturb $L''$ slightly to obtain a map that is still a lifting but is actually a contraction.
	Now,
	\begin{align*}
	L'' \in &(L' + V) \cap (1+\varepsilon/2)B \subset (\overline{B+V}) \cap (1+\varepsilon/2)B \\
	& \subset (\overline{B+W}) \cap (1+\varepsilon/2)B \subset B + \varepsilon(B \cap W)
	\end{align*}
	where we have used \eqref{L-in-closure} in the last step of the first line, and \cite[Lemma II.2.5]{Harmand-Werner-Werner} in the last step of the second line.
	Thus there is a contraction $\tilde{L} \in \lex$ with $\| \tilde{L} - L''\| \le \varepsilon$ and $\tilde{L} - L \in W$.
	It follows that $\tilde{L}$ satisfies the desired conditions.
\end{proof}

We are now ready to prove the Ando-Choi-Effros Theorem respecting subspaces
(compare to \cite[Thm. II.2.1]{Harmand-Werner-Werner}).

\begin{theorem}\label{thm-Ando-Choi-Effros-respecting-subspaces}
	Suppose that $\J$ is a simultaneous $M$-ideal in $\X$, and let $q_i : X_i \to X_i/J_i$ be the quotient maps for $i=1,2$.
	Let $\Y$ be a \pbs{} with $Y_1$ separable, and let $T :(Y_1,Y_2) \to (X_1/J_1,X_2/J_2)$ be a linear map with $\n{T} = 1$.
	If either
	\begin{enumerate}[(a)]
	\item\label{ACE:assump:BAP} $\Y$ has the $\lambda$-BAP; or
	\item\label{ACE:assump:LS} $\J$ is a Lindenstrauss pair,
	\end{enumerate}
	then there exists $L : (Y_1,Y_2) \to (X_1,X_2)$ such that $q_1\circ L = T$.
	Moreover, $\n{L} \le \lambda$ under assumption \eqref{ACE:assump:BAP} and $\n{L} \le 1$ under assumption \eqref{ACE:assump:LS}.
\end{theorem}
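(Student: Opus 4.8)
The plan is to reduce the statement, via the separability of $Y_1$, to an iterated use of the preparatory Lemma \ref{lemma-pre-Ando-Choi-Effros}, which already packages the two relevant hypotheses. First I would fix a paving $(E_n,F_n)$ of $\Y$, i.e. increasing finite-dimensional subpairs with $F_n\subseteq E_n$, $\bigcup E_n$ dense in $Y_1$ and $\bigcup F_n$ dense in $Y_2$; this exists because $Y_1$, and hence $Y_2$, is separable. In both cases the basic building block is a contractive simultaneous lifting of $T$ restricted to a single finite-dimensional subpair: applying Lemma \ref{lemma-pre-Ando-Choi-Effros} with the smaller pair equal to $(\{0\},\{0\})$ and the trivial projection $\pi=0$ in alternative (a) (or with alternative (b)) produces, for any finite-dimensional subpair $(G_1,G_2)$, a contractive $\ell:(G_1,G_2)\to(X_1,X_2)$ with $q_1\circ \ell = \restr{T}{G_1}$.

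For the Lindenstrauss case \eqref{ACE:assump:LS}, alternative (b) of Lemma \ref{lemma-pre-Ando-Choi-Effros} is available at every step, so I would inductively build contractive simultaneous liftings $\ell_n:(E_n,F_n)\to(X_1,X_2)$ of $\restr{T}{E_n}$ with $\bign{\restr{\ell_{n+1}}{E_n}-\ell_n}\le 2^{-n}$, extending $\ell_n$ to $\ell_{n+1}$ by a single application of the Lemma with smaller pair $(E_n,F_n)\subseteq(E_{n+1},F_{n+1})$ (starting from $\ell_0=0$). The summable errors make $(\ell_n)$ converge uniformly on $\bigcup E_n$ to a contractive map lifting $T$ there and carrying $\bigcup F_n$ into $X_2$; extending by continuity gives $L:(Y_1,Y_2)\to(X_1,X_2)$ with $q_1\circ L=T$ and $\n{L}\le 1$.

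For the BAP case \eqref{ACE:assump:BAP} the finite-dimensional liftings cannot be made to extend one another, since $\lambda$-BAP supplies no contractive projections between consecutive $E_n$. Instead I would use the $\lambda$-BAP of the pair to choose finite-rank maps $B_n:Y_1\to Y_1$ with $\n{B_n}\le\lambda$, $B_n(Y_2)\subseteq Y_2$ and $B_n\to Id_{Y_1}$ in the strong operator topology. Setting $E_n=\ran B_n$ and $F_n=B_n(Y_2)$, the building block above yields contractive simultaneous liftings $\ell_n:(E_n,F_n)\to(X_1,X_2)$, and I would put $L_n=\ell_n\circ B_n:(Y_1,Y_2)\to(X_1,X_2)$. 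Then $\n{L_n}\le\lambda$, $L_n(Y_2)\subseteq X_2$, and $q_1\circ L_n = T\circ B_n\to T$ pointwise. Viewing each $L_n$ in $\L(Y_1;X_1^{**})$ and using that the $\lambda$-ball there is compact in the point-weak$^*$ topology, I would extract a cluster point $L:Y_1\to X_1^{**}$ with $\n{L}\le\lambda$; since $X_2^{**}\equiv X_2^{\perp\perp}$ is weak$^*$-closed in $X_1^{**}$ the map $L$ respects subspaces, and weak$^*$-continuity of $q_1^{**}$ shows that $L$ lifts $T$ into the bidual.

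The main obstacle is the final correction in the BAP case: converting the bidual-valued lifting $L:(Y_1,Y_2)\to(X_1^{**},X_2^{**})$ into a genuine norm-$\lambda$ lifting into $(X_1,X_2)$. Here I would exploit the simultaneous $M$-ideal structure — the compatible $M$-projection $P:X_1^{**}\to J_1^{\perp\perp}$ of Remark \ref{remark-simultaneous-Mideal}, under which $Id_{X_1^{**}}-P$ identifies the relevant complement with $(X_1/J_1)^{**}$ — together with Theorem \ref{thm-PLRRS-a-la-Dean} and a weak$^*$-density and perturbation argument in the spirit of \cite[Lemma II.2.5, Thm. II.2.1]{Harmand-Werner-Werner}, so as to approximate $L$ by honest $X_1$-valued simultaneous liftings and then pass to a genuine one. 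Throughout, the delicate point is that every projection, extension and limit must respect the subspace, i.e. carry $Y_2$-data into $X_2$-data; this is precisely what the compatibility of the $L$-projections in the definition of a simultaneous $M$-ideal, and the subspace-respecting forms of Dean's identity and of the preparatory lemma, are designed to guarantee.
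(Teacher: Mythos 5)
Your treatment of the Lindenstrauss case \eqref{ACE:assump:LS} is exactly the paper's argument (inductive use of Lemma \ref{lemma-pre-Ando-Choi-Effros}(b) along a paving with $2^{-n}$ errors), and your observation that alternative (a) with $\pi=0$ yields a contractive simultaneous lifting on any single finite-dimensional subpair is also correct. The problem is the BAP case: the step you flag as ``the final correction'' is not a correction but the entire content of the theorem, and the argument you sketch for it does not go through. A point-weak$^*$ cluster point $L$ of the maps $L_n=\ell_n\circ B_n$ is a lifting into $(X_1^{**},X_2^{**})$, but such bidual-valued liftings exist \emph{for free} whenever $J_1$ is an $M$-ideal: since $X_1^{**}=J_1^{\perp\perp}\oplus_\infty \ran(Id-P)$ with $\ran(Id-P)\equiv(X_1/J_1)^{**}$, one may simply compose $T$ with the isometric section $(X_1/J_1)\to X_1^{**}$, with no approximation hypothesis on $\Y$ at all. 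So your cluster point carries no more information than this trivial lifting, and ``pushing it down'' to $X_1$ cannot be a routine weak$^*$-density-plus-perturbation argument. The mechanism that works in \cite[Lemma II.2.4]{Harmand-Werner-Werner} (and in Lemma \ref{lemma-pre-Ando-Choi-Effros}) is that on a \emph{finite-dimensional} domain an honest $X_1$-valued algebraic lifting $L'$ exists a priori, so the weak$^*$ statement $L'\in\overline{B+V}^{w^*}$ can be intersected back with $\L(E_1;X_1)$ and upgraded to the norm closure by convexity; for infinite-dimensional $Y_1$ there is no such $L'$ to start from, and point-weak$^*$ limits of $X_1$-valued maps are genuinely $X_1^{**}$-valued in general.

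What the paper does instead is arrange the local-to-global gluing so that it never leaves $X_1$ (or rather $c(X_1)$). It passes to the pair of sequence spaces $\big(c(X_1),c(X_2)\big)$ with simultaneous $M$-ideal $\big(c(J_1),c(J_2)\big)$, and builds from the BAP maps $S_n$ the auxiliary subspaces $H_i\subset c(Y_i)$ spanned by the eventually-constant sequences $(S_1y,\dots,S_my,S_my,\dots)$. On $H_1$ the truncations $\pi_m$ are genuine \emph{contractive} projections onto an increasing chain of finite-dimensional subpairs, which is precisely what alternative \eqref{preACE:assump:projection} of Lemma \ref{lemma-pre-Ando-Choi-Effros} needs; iterating with $2^{-m}$ errors gives a norm-convergent limit $\Lambda:H_1\to c(X_1)$ respecting subspaces, and $Ly=\limit\Lambda\big((S_my)_m\big)$ is the desired $X_1$-valued lifting with $\n{L}\le\lambda$. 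In short: your construction of the individual $\ell_n$ is fine, but without the $c(\cdot)$/$H_1$ device (or an equivalent way of making consecutive finite-dimensional liftings cohere in norm), the passage from the family $(L_n)$ to a single $X_1$-valued lifting is a genuine gap.
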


\begin{proof}

	We start assuming condition \eqref{ACE:assump:LS}, since the proof is easier.
	Let $(E_n,F_n)$ be a paving for $\Y$.
	Let $E_0 = F_0 = \{0\}$ and $L_0 = 0$.
	Using Lemma \ref{lemma-pre-Ando-Choi-Effros}, we can inductively define a sequence of contractions $L_n : E_n \to X_1$ such that
	$$
	q_1 \circ L_n = \restr{T}{E_n}, \qquad L_n(F_n) \subset X_2, \qquad \text{ and }  \n{ \restr{L_n}{E_{n-1}} - L_{n-1}} \le 2^{-n}.
	$$
	For any $y \in \bigcup_n E_n$, the sequence $(L_ny)$ is eventually defined and Cauchy. Hence $Ly := \lim_{n\to\infty} L_n$ defines a contraction on  $\bigcup_n E_n$ that can be extended to a contraction $L : Y_1 \to X_1$ that clearly has the desired properties.

	Now assume condition \eqref{ACE:assump:BAP}.
	Consider the following diagram induced on the corresponding spaces of convergent sequences
	$$
	\xymatrix{
	c(Y_1) \ar[r]^{c(T)} & c(X_1/J_1)  & c(X_1) \ar[l]_{c(q_1)} & c(J_1) \ar[l] \\ 
	c(Y_2) \ar[u] \ar[r]_{c(T|_{Y_2})} & c(X_2/J_2) \ar[u]  & c(X_2) \ar[u] \ar[l]^{c(q_2)} & c(J_2) \ar[u] \ar[l] \\ 
	}
	$$
	Just as in the proof of \cite[Thm. II.2.1]{Harmand-Werner-Werner} but additionally using Theorem \ref{thm-geo-charac}, note that $c(T)$ is a contraction, and for $i=1,2$ we have that $c(q_i) : c(X_i) \to c(X_i/J_i)$ is a quotient map with kernel $c(J_i)$, so $c(X_i/J_i) \cong c(X_i)/c(J_i)$ and moreover 
	 $\big( c(J_1), c(J_2) \big)$ is a simultaneous $M$-ideal in $\big( c(X_1), c(X_2))$.

	Since $Y_1$ is separable and $\Y$ has the $\lambda$-BAP, by standard arguments there exists a sequence of finite-rank operators $S_n : Y_1 \to Y_1$ with $\n{S_n} \le \lambda$ converging strongly to $Id_{Y_1}$ and leaving $Y_2$ invariant.
	For $i=1,2$ we define an auxiliary subspace $H_i \subset c(Y_i)$ as the closed linear span of the sequences
	$$
	(S_1y, S_2y, \dotsc, S_{m-1}y, S_my, S_my, \dotsc)
	$$
	where $m \in \N$ and $y \in Y_i$.
	Note that $(S_ny)_n \in H_i$ for every $y \in X_i$.
	For $m \in \N$ and $(y_n)_n \in H_i$ we define 
	$$
	\pi^i_m\big( (y_n)_n \big) = (y_1, y_2, \dotsc, y_{m-1}, y_m, y_m, \dotsc).
	$$
	Note that the $\pi^i_m$ form an increasing sequence of contractive finite-rank projections on $H_i$ converging strongly to $Id_{H_i}$, and each $\pi^2_m$ is simply the restriction of $\pi_m^1$ to $H_2$.
	If we let $E^i_m = \ran(\pi_m^i)$, we get finite-dimensional subspaces $E^2_m \subset E^1_m$ with $E^i_m \subset Y_i$ for $i = 1,2$; Define also $E^1_0 = \{0\}$ and $L_0 = 0$.
	Using Lemma \ref{lemma-pre-Ando-Choi-Effros}, we define inductively a sequence of contractions $L_m : E^1_m \to c(X_1)$ such that
	$$
	c(q_1) \circ L_m = \restr{c(T)}{E^1_m}, \quad L_m(E^2_m) \subset c(X_2), \quad \n{ \restr{L_m}{E^1_{m-1}} - L_{m-1} } \le 2^{-m}.
	$$
	For $y \in \bigcup_m E_m$ the sequence is eventually defined and Cauchy, hence we obtain a contractive linear map $\Lambda : H_1 \to c(X_1)$ such that
	$$
	c(q_1) \Lambda = c(T)|_{H_1} \quad \text{and} \quad \Lambda(H_2) \subset c(X_2).  
	$$
	We now define $L : Y_1 \to X_1$ by 
	$$
	Ly = \limit \Lambda \big( (S_my)_{m} \big)
	$$
	Note that $L(Y_2) \subset X_2$ and moreover for any $y \in Y_1$
	$$
	\n{Ly} \le \n{\Lambda} \sup_m \n{S_m y} \le \lambda \n{y},
	$$
	and
	\begin{align*}
		q_1(Ly) &= \limit c(q_1) \big( \Lambda \big( (S_my)_{m} \big) \big) \\
		&= \limit (c(q_1) \Lambda) \big( (S_my)_{m} \big) \\
		&= \lim_m T(S_my) = Ty. 
	\end{align*}
\end{proof}

As a first consequence, we get a version of the Michael-Pe{\l}czy{\'n}ski extension theorem \cite{Michael-Pelczynski} respecting subspaces.

\begin{corollary}\label{cor-Michael-Pelczynski}
Let $K$ be a compact Hausdorff space.
Suppose $X_2 \subseteq X_1$ are closed subspaces of $C(K)$, and $D \subseteq K$ is closed.
If $(\restr{X_1}{D}, X_1)$ and $(\restr{X_2}{D}, X_2)$ both have the bounded extension property,
$\restr{X_1}{D}$ is separable and the pair $(\restr{X_1}{D} ,\restr{X_2}{D} )$ has the $\lambda$-BAP,  
then there is a linear extension operator $T : (\restr{X_1}{D}, \restr{X_2}{D} ) \to \X$  (that is, $(Tf)(k) = f(k)$ for any $f \in \restr{X_1}{D}$ and $k \in D$) 
with $\n{T} \le \lambda$.
\end{corollary}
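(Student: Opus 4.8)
The plan is to realize the desired extension operator as a simultaneous lifting of the identity map through a pair of quotient maps, and to produce that lifting by Theorem~\ref{thm-Ando-Choi-Effros-respecting-subspaces}. Write $J_i = J_D \cap X_i$ for $i=1,2$. The first step is to invoke Corollary~\ref{cor-joint-bounded-extension-property}: since both $(\restr{X_1}{D},X_1)$ and $(\restr{X_2}{D},X_2)$ have the bounded extension property, $(J_1,J_2)$ is a simultaneous $M$-ideal in $\X$. Let $q_i : X_i \to X_i/J_i$ be the quotient maps. The kernel of the restriction map $r_i : X_i \to C(D)$, $f \mapsto \restr{f}{D}$, is exactly $J_i$, so $r_i$ induces a linear bijection $\bar r_i : X_i/J_i \to \restr{X_i}{D}$, and clearly $\n{\bar r_i} \le 1$.

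The key step, and the main obstacle, is to upgrade $\bar r_i$ to an isometry $X_i/J_i \equiv \restr{X_i}{D}$; without this, the norm of the lifting produced below would pick up the extension constant $C$ rather than the bare $\lambda$. Here the full strength of the bounded extension property, in particular the smallness-away-from-$D$ clause, is needed, and the cleanest route is through the adjoint. Identify $(X_i/J_i)^* \equiv J_i^\perp \subseteq X_i^*$ isometrically. Given $\phi \in J_i^\perp$ with $\n{\phi} \le 1$, extend it by Hahn--Banach to a Radon measure $\nu$ on $K$ with $\n{\nu} \le 1$ and split $\nu = \nu_D + \nu'$ into its parts on $D$ and on $K \setminus D$. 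For $f \in X_i$ with $\restr{f}{D} = h$, apply the bounded extension property to $h$ along a shrinking neighborhood $U \downarrow D$ with smallness parameter tending to $0$, obtaining $F \in X_i$ agreeing with $f$ on $D$ and uniformly small off $U$; since $f - F \in J_i$ and $\phi$ annihilates $J_i$, we get $\pair{\phi}{f} = \int_K F\, d\nu$. The smallness of $F$ off $U$ together with the regularity of $\nu'$ (recall $\bigcap_{U \supseteq D} U = D$) forces the $K \setminus D$ contribution to vanish in the limit, so that $\pair{\phi}{f} = \int_D f\, d\nu_D$ for every $f \in X_i$. Thus $\phi$ is represented by a measure on $D$ of norm at most $1$, which shows that the adjoint of $\bar r_i$ is a metric surjection; being also injective (as $\bar r_i$ is onto), it is an isometry, and hence so is $\bar r_i$.

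With the identifications in hand, define $T : (\restr{X_1}{D},\restr{X_2}{D}) \to (X_1/J_1, X_2/J_2)$ as the pair map given coordinatewise by $\bar r_i^{-1}$; it is well defined on the subspace because restriction commutes with the inclusion $X_2 \hookrightarrow X_1$, and the isometry in the first coordinate gives $\n{T} = 1$. Since $\restr{X_1}{D}$ is separable and the pair $(\restr{X_1}{D},\restr{X_2}{D})$ has the $\lambda$-BAP, Theorem~\ref{thm-Ando-Choi-Effros-respecting-subspaces}\eqref{ACE:assump:BAP} yields a linear map $L : (\restr{X_1}{D},\restr{X_2}{D}) \to \X$ with $q_1 \circ L = T$ and $\n{L} \le \lambda$. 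Finally, $q_1 \circ L = T$ unwinds to $\restr{(Lf)}{D} = f$ for all $f \in \restr{X_1}{D}$, because the coset $Tf$ is precisely the set of $X_1$-extensions of $f$; moreover $L(\restr{X_2}{D}) \subseteq X_2$ by construction. Hence $L$ is the required extension operator of norm at most $\lambda$.
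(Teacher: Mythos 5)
Your proof is correct and follows exactly the route the paper intends: the paper's own proof is the one-line observation that the statement follows from Corollary~\ref{cor-joint-bounded-extension-property} together with Theorem~\ref{thm-Ando-Choi-Effros-respecting-subspaces}, which is precisely your argument. The only detail you add beyond the paper is the explicit verification, via Hahn--Banach and regularity of the representing measure, that $X_i/(J_D\cap X_i)$ is isometric (not merely isomorphic) to $\restr{X_i}{D}$ --- a point the paper leaves implicit but which is indeed needed to get the constant $\lambda$ rather than $C\lambda$, and your argument for it is sound.
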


\begin{proof}
This follows immediately from Corollary \ref{cor-joint-bounded-extension-property} and Theorem \ref{thm-Ando-Choi-Effros-respecting-subspaces}.
\end{proof}

\begin{remark}
Let us mention some known situations where the hypotheses of Corollary \ref{cor-Michael-Pelczynski} are satisfied:
\begin{enumerate}[(a)]
\item If $X \subset C(K)$ is a subalgebra and $D$ is a $p$-set for $X$ (that is, the intersection of finitely many sets of the form $f^{-1}(\{1\})$ with $f \in B_X$), then the pair $(\restr{X}{D},X)$ has the bounded extension property \cite[p. 15]{Harmand-Werner-Werner}.
Therefore, if $X_2 \subset X_1 \subset C(K)$ are subalgebras and $D$ is a $p$-set for $X_2$, then both $(\restr{X_1}{D}, X_1)$ and $(\restr{X_2}{D}, X_2)$ have the bounded extension property.
%-------------------
\item In regards to the hypothesis of $(\restr{X_1}{D} ,\restr{X_2}{D} )$ having BAP, it must be said that
not too many nontrivial (i.e. where the subspace is not complemented) examples of pairs $(Y_1,Y_2)$ with the BAP are known:
when $\dim (Y_1/Y_2) < \infty$ and $Y_2$ has BAP \cite[Prop. 1.8]{Figiel-Johnson-Pelczynski};
when $Y_1$ is an $\mathcal{L}_\infty$ space and $Y_2$ has BAP  \cite[Thm. 2.1]{Figiel-Johnson-Pelczynski}.
In particular, when $Y_1 = C(D)$ and $Y_2$ has BAP.
It should be noted that even when $Y_2$ is known to have MAP, the aforementioned results from \cite{Figiel-Johnson-Pelczynski} do not give MAP for the pair $(Y_1,Y_2)$.
In the context of extensions it is particularly interesting to obtain extension operators with norm one, so it would be desirable to find examples of pairs of the form $(\restr{X_1}{D} ,\restr{X_2}{D} )$ having MAP.
\end{enumerate}
\end{remark}

%---------------------------------------------------------------------------------------------
%---------------------------------------------------------------------------------------------
%---------------------------------------------------------------------------------------------

We will now apply our Ando-Choi-Effros lifting theorem respecting subspaces to get a result promised in the introduction: a characterization of the BAP for pairs in terms of the existence of a simultaneous lifting for the associated Lusky-inspired diagram \eqref{full-diagram}.

\begin{theorem}\label{thm-BAP-equivalent-to-simultaneous-splitting}
Let $X_1$ be a separable Banach space, $X_2$ a closed subspace of $X_1$, $(E_n,F_n)$ a paving for $\X$, and let $\lambda \ge 1$.
The following are equivalent:
\begin{enumerate}[(i)]
	\item\label{BAP-equiv-split:BAP} $\X$ has $\lambda$-BAP.
	\item\label{BAP-equiv-split:split} 
	The pair of short exact sequences
	\begin{equation}\label{eqn-Lusky-pair}
	\xymatrix{
	0 \ar[r] & c_0(E_n)\ar[r]^{j_1} & c(E_n)\ar[r]^{q_1} & X_1 \ar[r]  &0\\
	0 \ar[r] & c_0(F_n)\ar[r]^{j_2}\ar[u] & c(F_n)\ar[r]^{q_2}\ar[u] & X_2 \ar[r]\ar[u]  &0 
	}
	\end{equation}
	admits a simultaneous (linear) lifting of norm less than or equal to $\lambda$.
\end{enumerate}
\end{theorem}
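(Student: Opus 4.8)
The plan is to prove the two implications separately. The substantive direction $\eqref{BAP-equiv-split:BAP}$ $\Rightarrow$ $\eqref{BAP-equiv-split:split}$ will be essentially a direct application of the machinery already built, since $\eqref{eqn-Lusky-pair}$ is precisely the pair of short exact sequences associated to a simultaneous $M$-ideal; the reverse direction $\eqref{BAP-equiv-split:split}$ $\Rightarrow$ $\eqref{BAP-equiv-split:BAP}$ will be the elementary Lusky coordinate extraction.

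For $\eqref{BAP-equiv-split:BAP}$ $\Rightarrow$ $\eqref{BAP-equiv-split:split}$, the starting point is that by Corollary \ref{cor-example-simultaneous-c0}, $\big(c_0(E_n), c_0(F_n)\big)$ is a simultaneous $M$-ideal in $\big(c(E_n), c(F_n)\big)$. I would first identify the associated quotient pair with $\X$: the maps $q_i : (x_n) \mapsto \lim_n x_n$ are metric surjections with kernels $c_0(E_n)$ and $c_0(F_n)$ (surjectivity onto the open unit ball uses the density of $\bigcup_n E_n$ and $\bigcup_n F_n$, choosing representatives $x_n \in E_n$ that converge to a prescribed limit while keeping $\sup_n\n{x_n}$ below $1$), so they induce isometric identifications $c(E_n)/c_0(E_n) \equiv X_1$ and $c(F_n)/c_0(F_n) \equiv X_2$. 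Under these identifications the embedding of Remark \ref{remark-simultaneous-Mideal}(b) is just the inclusion $X_2 \hookrightarrow X_1$, so the quotient pair is isometrically $\X$. I then apply Theorem \ref{thm-Ando-Choi-Effros-respecting-subspaces} with the simultaneous $M$-ideal $\big(c_0(E_n), c_0(F_n)\big)$, with $\Y = \X$ (which is separable and has $\lambda$-BAP by hypothesis), and with $T$ the identity map $\X \to \big(c(E_n)/c_0(E_n), c(F_n)/c_0(F_n)\big) \equiv \X$, which has norm one. Assumption \eqref{ACE:assump:BAP} of that theorem then yields $L : \X \to \big(c(E_n), c(F_n)\big)$ with $q_1 \circ L = Id_{X_1}$, $L(X_2) \subseteq c(F_n)$, and $\n{L} \le \lambda$ — exactly a simultaneous lifting of norm at most $\lambda$.

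For $\eqref{BAP-equiv-split:split}$ $\Rightarrow$ $\eqref{BAP-equiv-split:BAP}$, let $L : X_1 \to c(E_n)$ be a simultaneous lifting with $\n{L} \le \lambda$, so $q_1 \circ L = Id_{X_1}$ and $L(X_2) \subseteq c(F_n)$. For each $x \in X_1$ write $L(x) = \big(L(x)_m\big)_m$, a convergent sequence with $L(x)_m \in E_m$ and $\lim_m L(x)_m = x$. Define $S_m : X_1 \to X_1$ by $S_m(x) = L(x)_m$. Each $S_m$ is a finite-rank linear map with range in $E_m$, and $\n{S_m(x)} = \n{L(x)_m} \le \n{L(x)} \le \lambda\n{x}$, so $\n{S_m} \le \lambda$. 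Since $L(x)_m \to x$, the $S_m$ converge strongly to $Id_{X_1}$; combined with the uniform norm bound, a standard compactness argument gives uniform convergence on compact sets. Moreover $x \in X_2$ forces $L(x) \in c(F_n)$, hence $S_m(x) = L(x)_m \in F_m \subseteq X_2$, so $S_m(X_2) \subseteq X_2$. Thus for each $\eps>0$ and compact $K \subset X_1$ there is $m$ with $\n{S_m(x)-x} \le \eps$ for all $x \in K$, and $S = S_m$ witnesses the $\lambda$-BAP for $\X$.

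The bulk of the difficulty has already been absorbed into Theorem \ref{thm-Ando-Choi-Effros-respecting-subspaces}, so the only point requiring genuine care is the bookkeeping in the forward direction: verifying that the quotient of $\big(c(E_n),c(F_n)\big)$ by the simultaneous $M$-ideal $\big(c_0(E_n),c_0(F_n)\big)$ is isometrically $\X$, compatibly with the subspace structure, so that the identity map really is an admissible norm-one datum $T$ for the Ando-Choi-Effros theorem. The reverse direction is routine.
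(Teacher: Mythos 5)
Your proof is correct and follows essentially the same route as the paper: the forward direction applies Corollary \ref{cor-example-simultaneous-c0} together with Theorem \ref{thm-Ando-Choi-Effros-respecting-subspaces} to the identity map after identifying $c(E_n)/c_0(E_n)$ with $X_1$, and the reverse direction extracts the coordinate maps $S_m$ from the lifting and invokes the standard arguments. The extra care you take in verifying the isometric identification of the quotient pair with $\X$ is a welcome elaboration of a step the paper treats as an observation.
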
	

\begin{proof}
	(ii) $\Rightarrow$ (i):
	Let $R : X_1 \to c(E_n)$ be a simultaneous linear lifting with $\n{R} \le \lambda$.
Taking the compositions of $R$ followed by the projection on the $n$-th coordinate gives a sequence of finite-rank maps on $X_1$ leaving $X_2$ invariant, with norms uniformly bounded by $\lambda$ and converging pointwise to the identity; standard arguments yield the BAP for the pair $\X$ (see, for example, \cite[Prop. 4.3]{Ryan}).
	
	(i) $\Rightarrow$ (ii):	
	Consider the map $Id_{X_1} : (X_1, X_2) \to (X_1,X_2)$, and observe that $c(E_n)/c_0(E_n) \equiv X_1$ and $c(E_n)/c_0(E_n) \equiv X_1$.
	By Corollary \ref{cor-example-simultaneous-c0} and Theorem \ref{thm-Ando-Choi-Effros-respecting-subspaces}, there exists a linear map $L : \X \to \big( c(E_n), c(F_n) \big)$ of norm at most $\lambda$ such that $q_1 \circ L = Id_{X_1}$.		
\end{proof}

Note that as a consequence, if condition \eqref{BAP-equiv-split:split} of Theorem \ref{thm-BAP-equivalent-to-simultaneous-splitting} is satisfied for one paving of $\X$ then it is satisfied for any paving of $\X$.

%---------------------------------------------------------------------------------------------
%---------------------------------------------------------------------------------------------
%---------------------------------------------------------------------------------------------

\section{Lipschitz BAP for pairs}

Let $\lambda \ge 1$. Recall that the Banach space $X$ is said to have the \emph{$\lambda$-Lipschitz bounded approximation property} ($\lambda$-Lipschitz BAP) \cite[Defn. 5.2]{Godefroy-Kalton-03} if, for each $\varepsilon>0$ and compact set $K \subset X$, there exists a Lipschitz map $S : X \to X$ with finite-dimensional range and such that $\Lip(S) \le \lambda$ and $\n{S(x)-x} \le \varepsilon$ for each $x \in K$. We now define the corresponding Lipschitz version of the BAP for pairs.

\begin{definition}
Let $(X,Y)$ be a \pbs.
We say that the pair $(X,Y)$ has the  \emph{$\lambda$-Lipschitz BAP} if, for each $\varepsilon>0$ and compact set $K \subset X$, there exists a Lipschitz map $S : (X,Y) \to (X,Y)$ with finite-dimensional range, $\Lip(S) \le \lambda$, and $\n{S(x)-x} \le \varepsilon$ for each $x \in K$.
\end{definition}

The celebrated Godefroy-Kalton theorem \cite[Thm. 5.3]{Godefroy-Kalton-03} states that for an individual Banach space $X$, the BAP and the Lipschitz BAP are equivalent. 
At this point, it is natural to wonder whether the analogous equivalence holds for the case of pairs.
One of the implications is trivial,
since the BAP for $(X,Y)$ obviously implies the Lipschitz BAP for $(X,Y)$ (and with the same constant). 
We show in Theorem \ref{thm-Lip-BAP-GK} below that the equivalence does hold in the presence of an additional hypothesis, a version for pairs of the Lipschitz-lifting property \cite[Defn. 5.2]{Godefroy-Kalton-03}.
We also show, with an example due to W.B. Johnson, that the equivalence does not hold in general.

\begin{definition}
The \pbs{}  $(X,Y)$ is said to have the (isometric) \emph{Lipschitz-lifting property} if there exists a (norm one) continuous linear map $T : (X,Y) \to \big( \fbs(X), \fbs(Y) \big)$ such that $\beta_X \circ T = Id_X$.
\end{definition}

The following is a version for pairs of  \cite[Prop. 2.6]{Godefroy-Kalton-03}. In particular, it implies that if a right-isometric pair of short exact sequences admits a simultaneous Lipschitz lifting, then the bidual pair of short exact sequences admits a simultaneous linear lifting. As a consequence, if $X$ is a reflexive space, then the BAP for $(X,Y)$ is equivalent to the Lipschitz BAP for $(X,Y)$.

\begin{proposition}\label{prop-splitting-of-bidual}
Suppose that the right-isometric pair of short exact sequences 
$$
\xymatrix{
	0 \ar[r] & Z_1 \ar[r] & Y_1 \ar[r]^{B_1} &X_1 \ar[r]  & 0\\
	0 \ar[r] & Z_2 \ar[r]\ar[u] & Y_2 \ar[r]^{B_2} \ar@{^{(}->}[u] &X_2 \ar[r] \ar@{^{(}->}[u]  & 0\\
	}
$$
admits a simultaneous Lipschitz lifting $L : (X_1,X_2) \to  (Y_1,Y_2)$.
Then for the dual pair of sequences
$$
\xymatrix{
	0 \ar[r] & X^*_1 \ar[r]^{B_1^*}\ar[d]^{r} & Y^*_1 \ar[r]\ar[d]^{r'} &Z^*_1 \ar[r] \ar[d]  & 0\\
	0 \ar[r] & X^*_2 \ar[r]^{B_2^*} & Y^*_2 \ar[r] &Z^*_2 \ar[r]  & 0\\
	}
$$
where $r$ and $r'$ are the restriction maps, there exist linear maps $T_1 : Y_1^* \to X_1^*$ and $T_2 : Y^*_2 \to X_2^*$ such that $T_1 \circ B_1^* = Id_{X_1^*}$, $T_2 \circ B_2^* = Id_{X_2^*}$ and $r \circ T_1 = T_2 \circ r'$.
\end{proposition}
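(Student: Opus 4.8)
The plan is to mimic the proof of the classical result \cite[Prop. 2.6]{Godefroy-Kalton-03}, carefully tracking the subspace at each step so that the two linear liftings $T_1,T_2$ are produced simultaneously and are compatible with the restriction maps. First I would use the universal property of the Lipschitz-free space. The simultaneous Lipschitz lifting $L:(X_1,X_2)\to(Y_1,Y_2)$ is a Lipschitz map (sending $0$ to $0$, after translation) with $L(X_2)\subseteq Y_2$; linearizing it via the free space construction gives linear maps $\overline{L_1}:\fbs(X_1)\to Y_1$ and $\overline{L_2}:\fbs(X_2)\to Y_2$ with $\overline{L_i}\circ\delta=L|_{X_i}$, and because $L$ respects the subspace these fit into a commuting square with the natural map $\fbs(X_2)\to\fbs(X_1)$. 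The key relation $B_1\circ L=Id_{X_1}$ linearizes to $B_1\circ\overline{L_1}=\beta_{X_1}$, and likewise $B_2\circ\overline{L_2}=\beta_{X_2}$, where $\beta_{X_i}$ is the barycentric map.

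\textbf{Constructing the dual liftings.} Next I would dualize and insert the barycentric embedding. Taking adjoints of $B_i\circ\overline{L_i}=\beta_{X_i}$ yields $\overline{L_i}^*\circ B_i^*=\beta_{X_i}^*$ on $X_i^*$. Since $\beta_{X_i}:\fbs(X_i)\to X_i$ is a norm-one linear map with $\beta_{X_i}\circ\delta_{X_i}=Id_{X_i}$ (that is, $\delta_{X_i}$ is a linear-on-the-right section after the free-space identification), its adjoint $\beta_{X_i}^*:X_i^*\to\fbs(X_i)^*=\Lip_0(X_i)$ is a section of $\delta_{X_i}^*$; composing with $\delta_{X_i}^*$ recovers the identity on $X_i^*$. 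Thus I would define
$$
T_i = \delta_{X_i}^*\circ \overline{L_i}^*:\; Y_i^*\longrightarrow X_i^*,
$$
and then $T_i\circ B_i^*=\delta_{X_i}^*\circ\overline{L_i}^*\circ B_i^*=\delta_{X_i}^*\circ\beta_{X_i}^*=Id_{X_i^*}$, which gives the two required lifting identities.

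\textbf{The compatibility relation and the main obstacle.} The genuinely new content beyond the classical proof is the relation $r\circ T_1=T_2\circ r'$, and I expect this to be the main obstacle; everything preceding it is a routine adaptation. The idea is that each arrow entering into $T_i=\delta_{X_i}^*\circ\overline{L_i}^*$ respects the subspace structure, so the whole composite does too. Concretely, the fact that $L(X_2)\subseteq Y_2$ forces the diagram relating $\overline{L_1}^*:Y_1^*\to\fbs(X_1)^*$ and $\overline{L_2}^*:Y_2^*\to\fbs(X_2)^*$ to commute with the restriction $r':Y_1^*\to Y_2^*$ on one side and with the adjoint of the inclusion $\fbs(X_2)\hookrightarrow\fbs(X_1)$ on the other; similarly the naturality of $\delta$ under the inclusion $X_2\hookrightarrow X_1$ makes $\delta_{X_i}^*$ compatible with $r$ and with that same free-space inclusion adjoint. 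I would therefore verify the equality $r\circ T_1=T_2\circ r'$ by a diagram chase: tracing an arbitrary $y^*\in Y_1^*$ through both composites and using that (i) the two free-space maps are intertwined by the inclusion $\fbs(X_2)\hookrightarrow\fbs(X_1)$ and (ii) $\delta$ is natural, so that restricting to $X_2$ commutes with the whole construction. The one point demanding care is ensuring that the identification $\fbs(X_i)^*\equiv\Lip_0(X_i)$ and the restriction maps are set up consistently across $i=1,2$, since the various adjoints live in different dual spaces; checking that the inclusion $\iota:\fbs(X_2)\to\fbs(X_1)$ has adjoint compatible with both $r$ and $r'$ is precisely what makes the final square commute.
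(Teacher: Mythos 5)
There is a genuine gap at exactly the point you flag as the crux. Your map $T_i=\delta_{X_i}^*\circ\overline{L_i}^*$ does not take values in $X_i^*$: under the identification $\fbs(X_i)^*\equiv\Lip_0(X_i)=X_i^\#$, the composite sends $y^*\in Y_i^*$ to $y^*\circ\overline{L_i}\circ\delta_{X_i}=y^*\circ \restr{L}{X_i}$, which is merely a Lipschitz function on $X_i$ and not a linear functional, since $L$ is only Lipschitz. The identity $\delta_{X_i}^*\circ\beta_{X_i}^*=Id_{X_i^*}$ that you verify only shows that your composite acts as the identity on the range of $B_i^*$ (where $y^*=x^*\circ B_i$ gives $y^*\circ L=x^*\circ B_i\circ L=x^*$); it does not make $T_i$ a map $Y_i^*\to X_i^*$ on all of $Y_i^*$. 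The missing ingredient is a contractive linear projection $P_i:X_i^\#\to X_i^*$ (Lindenstrauss's theorem that $X^*$ is norm-one complemented in $X^\#$, \cite[Prop.~7.5]{Benyamini-Lindenstrauss}), which must be inserted to push $y^*\circ \restr{L}{X_i}$ back into the dual. This is exactly what the paper does, setting $T_i=P_i\circ\restr{L_i^\#}{Y_i^*}$ where $L_i^\#(f)=f\circ\restr{L}{X_i}$; the free-space linearization in your first step is correct but plays no role here, since one can precompose with $L$ directly.

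Moreover, once the projections are inserted, the compatibility $r\circ T_1=T_2\circ r'$ is not a formal diagram chase about naturality of $\delta$ and the inclusion $\fbs(X_2)\hookrightarrow\fbs(X_1)$: it requires that $P_1$ and $P_2$ themselves intertwine the restriction operators, i.e.\ $r\circ P_1=P_2\circ\tilde r$ with $\tilde r:X_1^\#\to X_2^\#$ the restriction map. This is the genuinely ``pair'' content of the proposition, and it is precisely what the cited result of Benyamini--Lindenstrauss supplies. Your argument never confronts this point because in your setup the need for the projections never surfaces.
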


\begin{proof}
By \cite[Prop. 7.5]{Benyamini-Lindenstrauss}, there are contractive linear surjective projections $P_1 : X_1^\# \to X_1^*$ and $P_2 : X_2^\# \to X_2^*$ such that $rP_1 = P_2 \tilde{r}$,  where $\tilde{r} :X_1^\# \to X_2^\#$ is the restriction operator.
Let $L_1^\# : Y_1^\# \to X_1^\#$ and $L_2^\# : Y_2^\# \to X_2^\#$ be given by $f \mapsto f \circ L$ and $g \mapsto g \circ \restr{L}{X_2}$, respectively.
Choosing $T_1 = P_1 \circ \restr{L_1^\#}{Y_1^*}$ and $T_2 = P_2 \circ \restr{L_2^\#}{Y_2^*}$ gives the desired maps. 
\end{proof}

Next, a characterization of the Lipschitz-lifting property in terms of the existence of simultaneous liftings. Compare to \cite[Prop. 2.8]{Godefroy-Kalton-03}.

\begin{proposition}\label{prop-lifting-property}
Let $\X$ be a \pbs. Then $\X$ has the Lipschitz-lifting property if and only if every right-isometric
pair of short exact sequences
$$
\xymatrix{
	0 \ar[r] & Z_1 \ar[r] & Y_1 \ar[r]^{q_1} &X_1 \ar[r]  & 0\\
	0 \ar[r] & Z_2 \ar[r]\ar[u] & Y_2 \ar[r]^{q_2} \ar@{^{(}->}[u] &X_2 \ar[r] \ar@{^{(}->}[u]  & 0\\
	}
$$
which admits a simultaneous Lipschitz lifting also admits a simultaneous linear lifting.
\end{proposition}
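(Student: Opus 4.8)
The plan is to prove both implications of the equivalence, modeling the argument on the classical result \cite[Prop. 2.8]{Godefroy-Kalton-03} while tracking the compatibility between the two rows of the diagram throughout.

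For the forward direction, suppose $\X$ has the Lipschitz-lifting property, witnessed by a norm-one linear map $T : (X_1,X_2) \to (\fbs(X_1),\fbs(X_2))$ with $\beta_{X_1}\circ T = Id_{X_1}$ (and consequently $\beta_{X_2} \circ \restr{T}{X_2} = Id_{X_2}$, since the barycentric maps are compatible under the inclusion $\fbs(X_2) \hookrightarrow \fbs(X_1)$). Given a right-isometric pair of short exact sequences admitting a simultaneous Lipschitz lifting $\ell : (X_1,X_2) \to (Y_1,Y_2)$ with $q_1 \circ \ell = Id_{X_1}$, the idea is to linearize $\ell$ through the free space. First I would invoke the universal property of $\fbs$: the Lipschitz map $\ell : X_1 \to Y_1$ factors as $\ell = \overline{\ell}\circ \delta_{X_1}$ for a unique bounded linear map $\overline{\ell} : \fbs(X_1) \to Y_1$, and because $\ell(X_2) \subseteq Y_2$ we get $\overline{\ell}(\fbs(X_2)) \subseteq Y_2$ as well. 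The desired simultaneous linear lifting is then $L = \overline{\ell}\circ T : (X_1,X_2) \to (Y_1,Y_2)$. One then checks $q_1 \circ L = q_1 \circ \overline{\ell}\circ T = Id_{X_1}$ using that $q_1 \circ \overline{\ell}$ agrees with $\beta_{X_1}$ on the relevant generators (both send $\delta_{X_1}(x)$ to $x$, since $q_1\circ\ell = Id$), so $q_1 \circ \overline\ell = \beta_{X_1}$ and hence $q_1\circ L = \beta_{X_1}\circ T = Id_{X_1}$. The compatibility $L(X_2)\subseteq Y_2$ follows from the two inclusions $T(X_2)\subseteq\fbs(X_2)$ and $\overline\ell(\fbs(X_2))\subseteq Y_2$.

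For the converse, I would apply the hypothesis to the one distinguished pair of sequences where a simultaneous Lipschitz lifting is automatic, namely the pair built from the free spaces themselves. Concretely, take the right-isometric pair
\begin{equation*}
\xymatrix{
0 \ar[r] & \ker\beta_{X_1} \ar[r] & \fbs(X_1) \ar[r]^{\beta_{X_1}} & X_1 \ar[r] & 0\\
0 \ar[r] & \ker\beta_{X_2} \ar[r]\ar[u] & \fbs(X_2) \ar[r]^{\beta_{X_2}}\ar@{^{(}->}[u] & X_2 \ar[r]\ar@{^{(}->}[u] & 0
}
\end{equation*}
where the vertical maps are the canonical isometric inclusions. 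The nonlinear map $\delta : (X_1,X_2) \to (\fbs(X_1),\fbs(X_2))$ is a simultaneous Lipschitz lifting, since $\beta_{X_1}\circ\delta_{X_1} = Id_{X_1}$ and $\delta(X_2)\subseteq\fbs(X_2)$. By assumption, this pair admits a simultaneous \emph{linear} lifting $T : (X_1,X_2)\to(\fbs(X_1),\fbs(X_2))$ with $\beta_{X_1}\circ T = Id_{X_1}$ and $T(X_2)\subseteq\fbs(X_2)$, which is precisely the Lipschitz-lifting property for $\X$.

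The main obstacle I anticipate is purely bookkeeping rather than conceptual: verifying that the barycentric maps $\beta_{X_1}$ and $\beta_{X_2}$ are genuinely compatible with the inclusion $\fbs(X_2)\hookrightarrow\fbs(X_1)$ induced by $X_2\hookrightarrow X_1$, i.e.\ that $\restr{\beta_{X_1}}{\fbs(X_2)} = \beta_{X_2}$ after identifying $\fbs(X_2)$ with its image. This requires checking the identity on the dense set of finitely-supported barycenters, where it is immediate, and then extending by continuity. The one other point requiring care is the right-isometry of the free-space sequence above, which amounts to knowing that $\beta_{X_i}$ is a metric surjection and that $\fbs(X_2)\hookrightarrow\fbs(X_1)$ together with $X_2\hookrightarrow X_1$ are isometric; all of these are standard properties of the free-space construction recalled in the preliminaries.
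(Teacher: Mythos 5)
Your proposal is correct and follows essentially the same route as the paper: linearize the simultaneous Lipschitz lifting through $\big(\fbs(X_1),\fbs(X_2)\big)$ and compose with the map $T$ furnished by the Lipschitz-lifting property, checking $q_1\circ\overline{\ell}=\beta_{X_1}$ on the dense span of the $\delta_{X_1}(x)$. You additionally spell out the converse via the barycentric pair of sequences, which the paper leaves implicit; the only detail worth adding is to normalize $\ell$ so that $\ell(0)=0$ (replace $\ell$ by $\ell-\ell(0)$, noting $\ell(0)\in Y_2\cap\ker q_1$) before invoking the universal property of the free space.
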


\begin{proof}
Let $L : \X \to \Y$ be a simultaneous Lipschitz lifting for $(q_1,q_2)$, and let $L_2 = \restr{L_1}{X_2}$.
By the proof of \cite[Prop. 2.8]{Godefroy-Kalton-03} we get a commutative diagram
$$
\xymatrix{
	0 \ar[r] & Z_{X_1} \ar[r] \ar[d]^{V_1} & \fbs(X_1) \ar[d]^{\overline{L_1}} \ar[r]^{\beta_{X_1}} &X_1 \ar@{=}[d] \ar[r]  & 0\\
	0 \ar[r] & Z_1 \ar[r] & Y_1 \ar[r]^{q_1} &X_1 \ar[r]  & 0\\
	0 \ar[r] & Z_2 \ar[r]\ar[u] & Y_2 \ar[r]^{q_2}\ar@{^{(}->}[u] &X_2 \ar[r] \ar@{^{(}->}[u]  & 0\\
	0 \ar[r] & Z_{X_2} \ar[r] \ar[u]_{V_2} & \fbs(X_2) \ar[u]^{\overline{L_2}} \ar[r]_{\beta_{X_2}} &X_2 \ar@{=}[u] \ar[r]  & 0\\
	}
$$
If $T : X_1 \to \fbs(X_1)$ is a bounded linear map such that $\beta_{X_1} T = Id_{X_1}$ and $T(X_2) \subset \fbs(X_2)$, it is clear that $\overline{L_1} \circ T$ is a simultaneous linear lifting for $(q_1,q_2)$.
\end{proof}

The same argument gives the following isometric version, as in \cite[Prop. 2.9]{Godefroy-Kalton-03}.

\begin{proposition}
Let $\X$ be a \pbs{} with the isometric Lipschitz-lifting property, and consider the following
right-isometric pair of  isometric short exact sequences
$$
\xymatrix{
	0 \ar[r] & Z_1 \ar[r] & Y_1 \ar[r]^{q_1} &X_1 \ar[r]  & 0\\
	0 \ar[r] & Z_2 \ar[r]\ar[u] & Y_2 \ar[r]^{q_2} \ar@{^{(}->}[u] &X_2 \ar[r] \ar@{^{(}->}[u] & 0\\
	}
$$
If there exists an isometry $L :\X \to \Y$ (not necessarily linear) such that  $q_1 \circ L = Id_{X_1}$, then there is also a linear isometry $V : \X \to \Y$ such that $q_1\circ V = Id_{X_1}$.
\end{proposition}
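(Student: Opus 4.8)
The plan is to mimic the proof of the linear version (Proposition \ref{prop-lifting-property}) but keep careful track of isometry at every stage. The key observation is that the isometric Lipschitz-lifting property supplies a \emph{norm-one} linear map $T : \X \to \big( \fbs(X_1), \fbs(X_2) \big)$ with $\beta_{X_1} \circ T = Id_{X_1}$; combined with the barycentric maps being norm-one, this is what will let us upgrade from a Lipschitz isometry to a linear isometry. First I would run the Godefroy--Kalton construction from \cite[Prop. 2.9]{Godefroy-Kalton-03} on the top row using the Lipschitz isometry $L$: linearizing $L$ through the free space yields $\overline{L_1} : \fbs(X_1) \to Y_1$, which is now a \emph{linear} map with $q_1 \circ \overline{L_1} = \beta_{X_1}$, and—crucially for the isometric statement—one checks that $\overline{L_1}$ is itself an isometry when $L$ is (this is exactly where the isometric version of \cite[Prop. 2.9]{Godefroy-Kalton-03} differs from the merely bounded version).

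Next I would verify the diagram respects the subspace. Since $L$ maps $X_2$ into $Y_2$, its restriction $L_2 = \restr{L}{X_2}$ is a Lipschitz isometry $X_2 \to Y_2$ with $q_2 \circ L_2 = Id_{X_2}$, and its linearization $\overline{L_2} : \fbs(X_2) \to Y_2$ satisfies $\overline{L_1} \circ \iota = \iota' \circ \overline{L_2}$ where $\iota, \iota'$ are the canonical inclusions $\fbs(X_2) \hookrightarrow \fbs(X_1)$ and $Y_2 \hookrightarrow Y_1$; this compatibility is automatic from the functoriality of $\fbs(\cdot)$ applied to the inclusion $X_2 \hookrightarrow X_1$. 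I would then define
$$
V = \overline{L_1} \circ T,
$$
a linear map $X_1 \to Y_1$. The relation $q_1 \circ V = q_1 \circ \overline{L_1} \circ T = \beta_{X_1} \circ T = Id_{X_1}$ gives the lifting property, and $V(X_2) = \overline{L_1}(T(X_2)) \subseteq \overline{L_1}(\fbs(X_2)) = \overline{L_2}(\fbs(X_2)) \subseteq Y_2$ shows $V$ respects the subspace, so $V : \X \to \Y$ is a simultaneous linear lifting.

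It remains to check that $V$ is an isometry. Since $T$ and $\overline{L_1}$ are both contractions we get $\n{V} \le 1$; for the reverse inequality I would use that $q_1$ and hence $Id_{X_1} = q_1 \circ V$ is a metric surjection, so $\n{Vx} \ge \n{q_1 Vx} = \n{x}$ holds automatically, forcing $\n{Vx} = \n{x}$ for every $x \in X_1$. \textbf{The main obstacle} I expect is the isometric upgrade of the linearization step: proving that $\overline{L_1}$ is a genuine isometry (not just a contraction) from the hypothesis that $L$ is a Lipschitz isometry, and simultaneously ensuring the norm-one linear lifting $T$ furnished by the \emph{isometric} Lipschitz-lifting property is compatible with the subspace, i.e. that $T(X_2) \subseteq \fbs(X_2)$. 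This compatibility is built into the definition of the isometric Lipschitz-lifting property for the pair $\X$, but one has to unwind that definition carefully and confirm that the norm-one lifting it provides can be substituted verbatim into the $T$ slot of the Godefroy--Kalton diagram; once that is in place the argument closes cleanly by the metric-surjection estimate above.
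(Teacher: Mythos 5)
Your route is the paper's route: the paper proves this proposition by remarking that ``the same argument'' as in Proposition~\ref{prop-lifting-property} applies, i.e.\ one linearizes $L$ through the free space and composes with the norm-one simultaneous lifting $T$ furnished by the isometric Lipschitz-lifting property, exactly as you do, and the chain of steps you actually use is correct. One small normalization you should make explicit: to form $\overline{L_1}$ you need $L(0)=0$, so replace $L$ by $L-L(0)$ first, noting that $q_1(L(0))=0$ forces $L(0)\in Z_1$ and $0\in X_2$ forces $L(0)\in Y_2$, so the replacement preserves all hypotheses.

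However, the claim you single out as ``the main obstacle'' --- that $\overline{L_1}$ is a genuine isometry of $\fbs(X_1)$ into $Y_1$ when $L$ is an isometry --- is false in general, and you should delete it. The linearization of an isometry is only guaranteed to have norm $\Lip(L)=1$: for instance, if $L$ happens to be a linear isometric embedding, then $\overline{L_1}=L\circ\beta_{X_1}$, which is very far from injective on $\fbs(X_1)$. Fortunately your proof never uses this claim. What makes $V=\overline{L_1}\circ T$ an isometry is precisely the argument in your last paragraph: $V$ is a composition of two contractions, hence $\n{Vx}\le\n{x}$, while $q_1$ being a metric surjection gives $\n{x}=\n{q_1Vx}\le\n{Vx}$. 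That two-line squeeze is the entire ``isometric upgrade''; there is no hidden difficulty in the linearization step, and the subspace compatibility $T(X_2)\subseteq\fbs(X_2)$ is, as you say, built into the definition of the Lipschitz-lifting property for the pair.
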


A basic example of pairs with the Lipschitz-lifting property is given by the following lemma, corresponding to \cite[Lemma 2.10]{Godefroy-Kalton-03}.

\begin{lemma}\label{lemma-fbs-has-Lipschitz-lifting-property}
Let $X_1 \subseteq X_2$ be metric spaces. Then the pair $\big( \fbs(X_1), \fbs(X_2) \big)$ has the isometric Lipschitz lifting property.
\end{lemma}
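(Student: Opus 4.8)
The plan is to follow the single-space argument of Godefroy and Kalton \cite[Lemma 2.10]{Godefroy-Kalton-03}, constructing the lifting on the ambient free space as a linearized double Dirac map, and then to use functoriality of the free-space construction to check compatibility with the subspace. For a pointed metric space $M$ I write $\delta_M : M \to \fbs(M)$ for the canonical isometric embedding, and I recall that for a Banach space $B$ the barycentric map $\beta_B : \fbs(B) \to B$ satisfies $\beta_B \circ \delta_B = Id_B$ and $\n{\beta_B}=1$. Since $X_1 \subseteq X_2$, functoriality of $\fbs$ yields an isometric embedding $\fbs(X_1) \hookrightarrow \fbs(X_2)$; thus the ambient space of the pair is the larger space $\fbs(X_2)$, with $\fbs(X_1)$ as its subspace, and the goal is to produce a norm-one linear map $T : \fbs(X_2) \to \fbs(\fbs(X_2))$ with $\beta_{\fbs(X_2)} \circ T = Id_{\fbs(X_2)}$ and $T\big(\fbs(X_1)\big) \subseteq \fbs\big(\fbs(X_1)\big)$.

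First I would build the lifting. Both $\delta_{X_2} : X_2 \to \fbs(X_2)$ and $\delta_{\fbs(X_2)} : \fbs(X_2) \to \fbs(\fbs(X_2))$ are isometries, so $\delta_{\fbs(X_2)} \circ \delta_{X_2}$ is a $1$-Lipschitz map $X_2 \to \fbs(\fbs(X_2))$ sending $0$ to $0$; by the universal property of $\fbs(X_2)$ it linearizes to
$$
T := \overline{\delta_{\fbs(X_2)} \circ \delta_{X_2}} : \fbs(X_2) \longrightarrow \fbs(\fbs(X_2)), \qquad \n{T} = \Lip\big(\delta_{\fbs(X_2)} \circ \delta_{X_2}\big) \le 1,
$$
characterized by $T \circ \delta_{X_2} = \delta_{\fbs(X_2)} \circ \delta_{X_2}$. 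For the barycentric identity, note that for every $x \in X_2$
$$
\beta_{\fbs(X_2)}\big(T(\delta_{X_2}(x))\big) = \beta_{\fbs(X_2)}\big(\delta_{\fbs(X_2)}(\delta_{X_2}(x))\big) = \delta_{X_2}(x),
$$
so $\beta_{\fbs(X_2)} \circ T$ agrees with the identity on the dense span of the Dirac generators, whence $\beta_{\fbs(X_2)} \circ T = Id_{\fbs(X_2)}$; together with $\n{\beta_{\fbs(X_2)}}=1$ this forces $\n{T}=1$.

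The main obstacle is the subspace condition $T\big(\fbs(X_1)\big) \subseteq \fbs\big(\fbs(X_1)\big)$, which I would settle by naturality. Let $\iota : X_1 \hookrightarrow X_2$ be the inclusion and let $\hat\iota : \fbs(X_1) \to \fbs(X_2)$ and $\hat{\hat\iota} := \fbs(\hat\iota) : \fbs(\fbs(X_1)) \to \fbs(\fbs(X_2))$ be the induced isometric embeddings; these are exactly the maps realizing $\fbs(X_1)$ and $\fbs(\fbs(X_1))$ as subspaces of $\fbs(X_2)$ and $\fbs(\fbs(X_2))$ respectively. Naturality of $\delta$ gives $\hat\iota \circ \delta_{X_1} = \delta_{X_2} \circ \iota$ and $\hat{\hat\iota} \circ \delta_{\fbs(X_1)} = \delta_{\fbs(X_2)} \circ \hat\iota$. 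Hence for a generator $\delta_{X_1}(x)$ of $\fbs(X_1)$ (with $x \in X_1$),
$$
T\big(\hat\iota(\delta_{X_1}(x))\big) = \delta_{\fbs(X_2)}\big(\delta_{X_2}(\iota x)\big) = \delta_{\fbs(X_2)}\big(\hat\iota(\delta_{X_1}(x))\big) = \hat{\hat\iota}\big(\delta_{\fbs(X_1)}(\delta_{X_1}(x))\big),
$$
which lies in the range of $\hat{\hat\iota}$, i.e. in $\fbs(\fbs(X_1))$. By linearity and density of the generators, $T$ maps $\fbs(X_1)$ into $\fbs(\fbs(X_1))$, so $T$ is the required norm-one linear map and it establishes the isometric Lipschitz-lifting property for the pair. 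The only delicate point is keeping the two naturality squares aligned so that the computation lands in the correct copy of $\fbs(\fbs(X_1))$; the rest is a direct transcription of the single-space case.
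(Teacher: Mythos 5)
Your proposal is correct and takes essentially the same route as the paper's own proof: both linearize the composite Dirac map $\delta_{\fbs(\cdot)}\circ\delta_{(\cdot)}$ via the universal property, verify $\beta\circ T = Id$ on the Dirac generators and extend by density, and obtain the subspace condition from the fact that $T$ sends Diracs at points of the sub-free-space into the canonical copy of the corresponding second free space. The only divergence is notational: the lemma's hypothesis $X_1\subseteq X_2$ is written backwards relative to the paper's pair convention, and you resolved this by taking $\fbs(X_2)$ as the ambient space, whereas the paper's proof silently reads the hypothesis as $X_2\subseteq X_1$; your explicit naturality squares merely spell out what the paper leaves implicit.
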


\begin{proof}
As in the proof of \cite[Lemma 2.10]{Godefroy-Kalton-03},
the isometry $t : \delta_{\fbs(X_1)} \delta_{X_1} : X_1 \to \fbs(\fbs(X_1)) = \fbs^2(X_1)$
induces, by the universal property of the free space,
a linear map $T : \fbs(X_1) \to \fbs^2(X_1)$ with $\n{T} = 1$, $T\delta_{X_1}(x) = \delta_{\fbs(X_1)}\delta_{X_1}(x)$ for every $x \in X_1$.
Note that for every $x \in X_2$ we have $t(x) \in \fbs^2(X_2)$, so for any $m \in \fbs(X_2)$ we have $T(m) \in \fbs^2(X_2)$; that is, $T(\fbs(X_2)) \subset \fbs^2(X_2)$.
Since $T\delta_{X_1}(x) = \delta_{\fbs(X_1)}\delta_{X_1}(x)$ for every $x \in X_1$, it follows that $\beta_{\fbs(X_1)}T\delta_{X_1}(x) = \delta_{X_1}(x)$, from where we conclude $\beta_{\fbs(X_1)}T = Id_{\fbs(X_1)}$.
\end{proof}

\begin{remark}
It would be desirable to find conditions, perhaps separability as in \cite[Thm. 3.1]{Godefroy-Kalton-03} together with some extra hypotheses, to find examples of other pairs with the Lipschitz-lifting property.
\end{remark}

The following is a characterization of the pairs $(X,Y)$ having the Lipschitz-lifting property, in the style of
\cite[Thm. 2.12]{Godefroy-Kalton-03}.

\begin{proposition}
Let $(X,Y)$ be a \pbs. Define $\mathcal{G}(X) = \ker(\beta_X) \oplus X$ and $\mathcal{G}(Y) = \ker(\beta_Y) \oplus Y$.
Then $\big(\fbs(X), \fbs(Y)\big)$ is simultaneously Lipschitz isomorphic to $\big( \mathcal{G}(X), \mathcal{G}(Y) \big)$.
Moreover, these two pairs are simultaneously linearly isomorphic if and only if $(X,Y)$ has the lifting property.
\end{proposition}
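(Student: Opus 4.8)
The plan is to follow the strategy of \cite[Thm. 2.12]{Godefroy-Kalton-03}, adapting each step so that it respects the subspaces throughout. The statement has two parts: first, that $\big(\fbs(X),\fbs(Y)\big)$ is simultaneously Lipschitz isomorphic to $\big(\mathcal{G}(X),\mathcal{G}(Y)\big)$; and second, that this becomes a simultaneous \emph{linear} isomorphism exactly when $(X,Y)$ has the lifting property. For the first part, I would begin by recording the two short exact sequences
$$
\xymatrix{
0 \ar[r] & \ker(\beta_X) \ar[r] & \fbs(X) \ar[r]^{\beta_X} & X \ar[r] & 0
}
$$
and its analogue for $Y$, noting that the inclusion $\fbs(Y)\hookrightarrow\fbs(X)$ carries $\ker(\beta_Y)$ into $\ker(\beta_X)$ and intertwines $\beta_Y$ with $\beta_X$, so that we have a right-isometric pair of short exact sequences in the sense defined earlier. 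The nonlinear map $\delta_X:X\to\fbs(X)$ is an isometric (hence Lipschitz) section of $\beta_X$ with $\delta_X(Y)\subseteq\fbs(Y)$, so it is a simultaneous Lipschitz lifting. From this section one builds the standard Lipschitz splitting map $m\mapsto\big(m-\delta_X\beta_X(m),\,\beta_X(m)\big)$ from $\fbs(X)$ to $\ker(\beta_X)\oplus X=\mathcal{G}(X)$; the key point to verify is that this map restricts to the corresponding map on $\fbs(Y)$, sending it into $\ker(\beta_Y)\oplus Y=\mathcal{G}(Y)$, which follows because both $\delta_X$ and $\beta_X$ respect the subspaces. One then checks that this map is a Lipschitz bijection with Lipschitz inverse $(w,x)\mapsto w+\delta_X(x)$, again respecting the subspaces, giving the desired simultaneous Lipschitz isomorphism.

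For the second part, I would argue both directions. If $(X,Y)$ has the lifting property, there is a bounded linear $T:(X,Y)\to\big(\fbs(X),\fbs(Y)\big)$ with $\beta_X\circ T=Id_X$ and $T(Y)\subseteq\fbs(Y)$; this replaces the nonlinear $\delta_X$ in the formula above with a genuinely linear section. The map $m\mapsto\big(m-T\beta_X(m),\,\beta_X(m)\big)$ is then a bounded \emph{linear} bijection $\fbs(X)\to\mathcal{G}(X)$, with inverse $(w,x)\mapsto w+T(x)$, and since $T$, $\beta_X$, and the inclusions all respect the subspaces, it is a simultaneous linear isomorphism. Conversely, if there is a simultaneous linear isomorphism $\Phi:\big(\fbs(X),\fbs(Y)\big)\to\big(\mathcal{G}(X),\mathcal{G}(Y)\big)$, I would compose the canonical inclusion $x\mapsto(0,x)$ of $X$ into $\mathcal{G}(X)$ with $\Phi^{-1}$ to produce a bounded linear map $X\to\fbs(X)$, and then verify that it is a section of $\beta_X$ carrying $Y$ into $\fbs(Y)$, which yields the lifting property for the pair.

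The step I expect to require the most care is confirming at each stage that the constructions genuinely \emph{respect the subspaces}—that is, that every map sends the $Y$-component into the $\fbs(Y)$- or $\mathcal{G}(Y)$-component and that its restriction is the corresponding map for $Y$. In the individual (non-pair) case these verifications are automatic, but here one must repeatedly invoke that $\fbs(Y)\hookrightarrow\fbs(X)$ is compatible with the barycentric maps and the sections, exactly as was established in Lemma \ref{lemma-fbs-has-Lipschitz-lifting-property}. The linear/Lipschitz algebra of the splitting formulas themselves is routine and parallels \cite{Godefroy-Kalton-03}; the genuine content lies in tracking the subspace compatibility through the isomorphism and its inverse, and in matching the two halves of the equivalence so that the lifting property of the pair corresponds precisely to linearity (rather than mere Lipschitz-ness) of the section.
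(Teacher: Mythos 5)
Your first part and the forward implication of the second part coincide with the paper's own proof: the map $\mu \mapsto \big( \mu - \delta_X\beta_X(\mu), \beta_X(\mu) \big)$ with inverse $(w,x)\mapsto w+\delta_X(x)$, and the replacement of $\delta_X$ by a linear section $T$ when the lifting property holds, are exactly the paper's argument, and the subspace-compatibility checks you flag are the right ones (they reduce to $\beta_Y=\restr{\beta_X}{\fbs(Y)}$ and $\ker\beta_Y=\ker\beta_X\cap\fbs(Y)$).

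The converse implication, however, contains a genuine gap. You propose to take a simultaneous linear isomorphism $\Phi:\big(\fbs(X),\fbs(Y)\big)\to\big(\mathcal{G}(X),\mathcal{G}(Y)\big)$, set $T=\Phi^{-1}(0,\cdot)$, and then ``verify that it is a section of $\beta_X$.'' That verification cannot be carried out for a general $\Phi$: the hypothesis gives you \emph{some} simultaneous linear isomorphism, not one intertwining $\beta_X$ with the coordinate projection of $\mathcal{G}(X)$ onto $X$. For instance, if $X$ does have the lifting property with linear section $T_0$, composing the resulting isomorphism $\mu\mapsto(\mu-T_0\beta_X\mu,\beta_X\mu)$ with an automorphism of $\ker\beta_X\oplus X$ of the form $(w,x)\mapsto(Aw,Bx)$ yields a $\Phi$ with $\beta_X\circ\Phi^{-1}(0,\cdot)=B^{-1}\ne Id_X$; and if $\ker\beta_X$ happens to contain a complemented copy of $X$, one can even arrange $\Phi^{-1}(\{0\}\oplus X)\subseteq\ker\beta_X$, so that $\beta_X\circ T=0$. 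More structurally: knowing that the middle term of a short exact sequence is isomorphic to the direct sum of the kernel and the quotient does not imply that the sequence splits, so some additional input is unavoidable. The paper supplies it through Lemma \ref{lemma-fbs-has-Lipschitz-lifting-property}: the pair $\big(\fbs(X),\fbs(Y)\big)$ has the isometric Lipschitz-lifting property, this property transfers through the simultaneous linear isomorphism $\Phi$ to $\big(\mathcal{G}(X),\mathcal{G}(Y)\big)$, and a diagram chase as in Godefroy--Kalton's Lemma 2.11 then produces the section. (One concrete way: apply the transferred property, via Proposition \ref{prop-lifting-property}, to the right-isometric pair of sequences with middle terms $\ker\beta_X\oplus\fbs(X)$ and $\ker\beta_Y\oplus\fbs(Y)$, quotient maps $Id\oplus\beta_X$ and $Id\oplus\beta_Y$ onto $\mathcal{G}(X)$ and $\mathcal{G}(Y)$, and simultaneous Lipschitz lifting $(w,x)\mapsto(w,\delta_X x)$; the second component of the resulting simultaneous linear lifting, evaluated at $(0,x)$, is the desired section of $\beta_X$ carrying $Y$ into $\fbs(Y)$.) Without an ingredient of this kind your converse does not go through.
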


\begin{proof}
The map $L : \fbs(X) \to \mathcal{G}(X)$ defined by $
\mu \mapsto ( \mu - \delta_X\beta_X(\mu), \beta_X(\mu) )$ is a simultaneous Lipschitz isomorphism between $\big(\fbs(X), \fbs(Y)\big)$ and $\big( \mathcal{G}(X), \mathcal{G}(Y) \big)$.
If $T : X \to \fbs(X)$ is a simultaneous linear lifting for $(X,Y)$, replacing $\delta_X$ by $T$ above shows that the pairs are simultaneously linearly isomorphic.
Conversely, if  $\big(\fbs(X), \fbs(Y)\big)$ and $\big( \mathcal{G}(X), \mathcal{G}(Y) \big)$ are simultaneously linearly isomorphic, Lemma \ref{lemma-fbs-has-Lipschitz-lifting-property} and a bit of diagram chasing (as in \cite[Lemma 2.11]{Godefroy-Kalton-03}) gives the desired result.
\end{proof}

Now we show the equivalence between the BAP and the Lipschitz BAP for pairs, when the Lipschitz-lifting property is present.
The proof follows along the lines of \cite[Cor. 2.3]{BorelMathurin}.

\begin{theorem}\label{thm-Lip-BAP-GK}
Let $X$ be a separable Banach space, and $Y$ a closed subspace of $X$. 
Suppose that $(X,Y)$ has the Lipschitz-lifting property.
Then the following are equivalent:
\begin{enumerate}[(i)]
	\item\label{thm-Lip-BAP:BAP} $(X,Y)$ has the BAP.
	\item\label{thm-Lip-BAP:LipBAP} $(X,Y)$ has the Lipschitz BAP.
\end{enumerate}
\end{theorem}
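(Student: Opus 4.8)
The plan is to prove the nontrivial implication \eqref{thm-Lip-BAP:LipBAP} $\Rightarrow$ \eqref{thm-Lip-BAP:BAP}, since the reverse is trivial as already observed. The strategy is to run the Lipschitz BAP through the Lusky-inspired pair of short exact sequences \eqref{eqn-Lusky-pair} to get a \emph{simultaneous Lipschitz lifting}, then use the Lipschitz-lifting property of $(X,Y)$ to linearize it, and finally invoke Theorem \ref{thm-BAP-equivalent-to-simultaneous-splitting} to conclude the (linear) BAP. Concretely, fix a paving $(E_n,F_n)$ of $(X,Y)$. First I would show that the $\lambda$-Lipschitz BAP for $(X,Y)$ yields a simultaneous Lipschitz lifting of \eqref{eqn-Lusky-pair}: given the Lipschitz maps $S_n : (X,Y) \to (X,Y)$ with finite-dimensional range approximating the identity, one builds a Lipschitz map $R : (X,Y) \to \big(c(E_n),c(F_n)\big)$ with $q_1 \circ R = Id_{X_1}$ by sending $x$ to a suitable convergent sequence whose $n$-th term lies in $E_n$ (and in $F_n$ when $x \in Y$), exactly as in the linear case but now only requiring Lipschitz control. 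This is the Lipschitz analogue of the argument in \cite[Thm. 2.2]{BorelMathurin}, adapted to respect the subspace.

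The second step is to apply Proposition \ref{prop-lifting-property}: since $\big(c_0(E_n),c_0(F_n)\big)$ is a simultaneous $M$-ideal in $\big(c(E_n),c(F_n)\big)$ by Corollary \ref{cor-example-simultaneous-c0}, the sequence \eqref{eqn-Lusky-pair} is a right-isometric pair of short exact sequences (the vertical maps on the right are the natural isometric embeddings and the $q_i$ are metric surjections). Because $(X,Y)$ has the Lipschitz-lifting property, Proposition \ref{prop-lifting-property} guarantees that a pair of sequences admitting a simultaneous Lipschitz lifting also admits a simultaneous \emph{linear} lifting. Applying this to \eqref{eqn-Lusky-pair} with the Lipschitz lifting from step one produces a bounded linear $L : (X,Y) \to \big(c(E_n),c(F_n)\big)$ with $q_1 \circ L = Id_{X_1}$ and $L(Y) \subset c(F_n)$ — that is, condition \eqref{BAP-equiv-split:split} of Theorem \ref{thm-BAP-equivalent-to-simultaneous-splitting}. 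That theorem then immediately delivers the BAP for the pair $(X,Y)$.

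The main obstacle I anticipate is the first step: verifying that the Lipschitz BAP for the pair really does produce a simultaneous Lipschitz lifting of \eqref{eqn-Lusky-pair} that respects the subspace. One must check that the approximating maps $S_n$ can be arranged (after passing to a subsequence and using the density of $\bigcup_n E_n$ and $\bigcup_n F_n$) so that the assignment $x \mapsto (S_n x)_n$ lands in $c(E_n)$ with $\lim_n S_n x = x$, while simultaneously $y \mapsto (S_n y)_n$ lands in $c(F_n)$ for $y \in Y$; the compatibility $F_n \subseteq E_n$ and the invariance $S_n(Y) \subseteq Y$ are precisely what make the lifting simultaneous. A minor technical point is controlling the Lipschitz constant of the resulting map and ensuring each coordinate genuinely has range in $E_n$ (rather than merely in $X$), which may require composing $S_n$ with the approximation so that $S_n$ has range inside $E_n$; this is a routine bookkeeping adaptation of the Borel-Mathurin construction, carried out while tracking the subspace. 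I would record these verifications briefly and cite \cite[Thm. 2.2]{BorelMathurin} and \cite[Cor. 2.3]{BorelMathurin} for the scalar template, emphasizing only the modifications needed to respect $Y$.
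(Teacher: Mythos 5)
Your proposal is correct and follows essentially the same route as the paper: extract uniformly Lipschitz finite-rank maps $\sigma_n$ respecting $Y$, use them to build a simultaneous Lipschitz lifting of \eqref{eqn-Lusky-pair}, linearize it via Proposition \ref{prop-lifting-property} using the Lipschitz-lifting property, and conclude by Theorem \ref{thm-BAP-equivalent-to-simultaneous-splitting}. The only cosmetic difference is that the paper sidesteps your anticipated range-bookkeeping issue by choosing the paving \emph{after} constructing the $\sigma_n$ (taking $E_n \supseteq \sigma_n(X)$ and $F_n \supseteq \sigma_n(Y)$), which is harmless since the splitting condition is paving-independent.
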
	

\begin{proof}
We have already observed that the implication
\eqref{thm-Lip-BAP:BAP} $\Rightarrow$ \eqref{thm-Lip-BAP:LipBAP} trivially holds in general.

Suppose now that $(X,Y)$ has the Lipschitz BAP.
By standard arguments (say, as in the proof of \cite[Thm. 2.2]{BorelMathurin}) we can construct a sequence of Lipschitz maps 
$\sigma_n : X \to X$ with finite-dimensional range so that $\sigma_n(Y) \subset Y$ for all $n\in\N$, $\sup_n \Lip(\sigma_n) < \infty$ and $\lim_n \sigma_nx = x$ for all $x \in X$.
Choosing pairs of finite-dimensional spaces $(E_n,F_n) \subset (X,Y)$ with $\sigma_n(X) \subset E_n$, $\sigma_n(Y) \subset F_n$, $\bigcup_n E_n$ dense in $X$ and $\bigcup_n F_n$ dense in $Y$, the map $T : X \to c(E_n)$ given by $Tx = (\sigma_n(x))_n$ is a simultaneous Lipschitz lifting for \eqref{eqn-Lusky-pair}.
Since $(X,Y)$ has the Lipschitz-lifting property, by Proposition \ref{prop-lifting-property}, there exists a simultaneous linear lifting $L : X \to c(E_n)$. By Theorem \ref{thm-BAP-equivalent-to-simultaneous-splitting}, we conclude $(X,Y)$ has BAP.
\end{proof}

In view of the preceding result, one would want to know whether all pairs of separable Banach spaces enjoy the Lipschitz-lifting property.
Unfortunately, that is not the case. 
In the rest of this section, we show a way to find pairs of separable Banach spaces without the Lipschitz-lifting property.
The argument is indirect, and relies on Theorem \ref{thm-Lip-BAP-GK};
we are indebted to Prof. W.B. Johnson for showing it to us.
We start by showing that the Lipschitz BAP for $X$ and $Y$ individually, together with a Lipschitz retraction from $X$ onto $Y$, imply the Lipschitz BAP for the pair $(X,Y)$.

\begin{proposition}\label{prop-retraction-implies-LipBAP}
Let $(X,Y)$ be a \pbs{}. If $X$ has $\lambda$-Lipschitz BAP, $Y$ has $\mu$-Lipschitz BAP and there is a Lipschitz retraction $P$ from $X$ onto $Y$, then the pair $(X,Y)$ has the $C$-Lipschitz BAP with $C = \mu\Lip(P) + \lambda(1 + \Lip(P))$.
\end{proposition}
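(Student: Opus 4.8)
The plan is to combine the approximating maps on $X$ and $Y$ through the retraction $P$, following the recipe suggested by the target constant $C=\mu\Lip(P)+\lambda(1+\Lip(P))$. The two summands $\mu\Lip(P)$ and $\lambda(1+\Lip(P))$ point to a map of the form $S=B\circ P + A'\circ(Id_X-P)$, where $B$ comes from the $\mu$-Lipschitz BAP of $Y$ (so that $\Lip(B\circ P)\le\mu\Lip(P)$) and $A'$ is a suitably recentered version of a map coming from the $\lambda$-Lipschitz BAP of $X$ (so that $\Lip(A'\circ(Id_X-P))\le\lambda\,\Lip(Id_X-P)\le\lambda(1+\Lip(P))$). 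Here $Id_X-P$ is Lipschitz with constant at most $1+\Lip(P)$ and, crucially, vanishes on $Y$ because $P$ is a retraction.

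Concretely, fix $\varepsilon>0$ and a compact set $K\subset X$. First I would form the two compact sets $P(K)\subset Y$ and $(Id_X-P)(K)\cup\{0\}\subset X$. Using the $\mu$-Lipschitz BAP of $Y$ I would choose a Lipschitz map $B:Y\to Y$ with finite-dimensional range, $\Lip(B)\le\mu$, and $\n{B(w)-w}\le\varepsilon/3$ for all $w\in P(K)$; note that $B(Y)\subseteq Y$ automatically. Using the $\lambda$-Lipschitz BAP of $X$ I would choose a Lipschitz map $A:X\to X$ with finite-dimensional range, $\Lip(A)\le\lambda$, and $\n{A(z)-z}\le\varepsilon/3$ for all $z\in(Id_X-P)(K)\cup\{0\}$; in particular $\n{A(0)}\le\varepsilon/3$. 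I then set $A'(z):=A(z)-A(0)$, so that $A'(0)=0$, $\Lip(A')=\Lip(A)\le\lambda$, and $A'$ still has finite-dimensional range, and finally define $S(x)=B(P(x))+A'(x-P(x))$.

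With $S$ so defined, the verification splits into four routine checks. The range of $S$ is contained in the finite-dimensional subspace $\spa\big(\ran(A)\cup\ran(B)\big)$, since $B(P(x))\in\ran(B)$ and $A'(x-P(x))=A(x-P(x))-A(0)\in\spa(\ran(A))$. For the Lipschitz constant I would use $\Lip(f+g)\le\Lip(f)+\Lip(g)$ together with the composition estimates above to obtain $\Lip(S)\le\mu\Lip(P)+\lambda(1+\Lip(P))=C$. To see $S(Y)\subseteq Y$, observe that for $y\in Y$ we have $P(y)=y$, so $B(P(y))=B(y)\in Y$, while $y-P(y)=0$ forces $A'(y-P(y))=A'(0)=0$; thus $S(y)=B(y)\in Y$. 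For the approximation, write $x=P(x)+(x-P(x))$ and estimate, for $x\in K$,
\begin{align*}
\n{S(x)-x}
&\le \n{B(P(x))-P(x)} + \n{A'(x-P(x))-(x-P(x))}\\
&\le \frac{\varepsilon}{3} + \n{A(x-P(x))-(x-P(x))} + \n{A(0)}\\
&\le \frac{\varepsilon}{3} + \frac{\varepsilon}{3} + \frac{\varepsilon}{3} = \varepsilon,
\end{align*}
using that $P(x)\in P(K)$ and $x-P(x)\in(Id_X-P)(K)$.

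The only genuine subtlety --- and the step I expect to be the main obstacle --- is arranging $S(Y)\subseteq Y$ \emph{exactly} rather than merely approximately. The naive choice $S=B\circ P + A\circ(Id_X-P)$ fails on $Y$ because $A(0)$ need not lie in $Y$: for $y\in Y$ it would leave the leftover term $A(0)$. The fix is to recenter $A$ to $A'=A-A(0)$, which annihilates the constant term that would otherwise leak out of $Y$, and then to compensate for the error this introduces by insisting that $0$ belong to the compact set on which $A$ approximates the identity (so that $\n{A(0)}\le\varepsilon/3$). Everything else is a bookkeeping of Lipschitz constants and images.
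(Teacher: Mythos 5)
Your proposal is correct and follows essentially the same route as the paper: both form $B\circ P + A'\circ(Id_X-P)$ with the second map normalized to vanish at $0$ so that the image of $Y$ lands in $Y$. The only difference is cosmetic: where the paper simply asserts ``we may assume $S(0)=0$,'' you spell out the recentering $A'=A-A(0)$ and the inclusion of $0$ in the compact set that makes this normalization harmless.
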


\begin{proof}
Let $\varepsilon>0$, and let $K \subset X$ be a compact set. Note that $P(X) \subset Y$ and $(Id_X-P)(K) \subset X$ are also compact.
Thus, there exist Lipschitz maps with finite-dimensional range $T : Y \to Y$ and $S : X \to X$ such that
$\Lip(T) \le \mu$, $\Lip(S) \le \lambda$, and for every $x \in K$, $\n{ TP(x) - P(x) } \le \varepsilon/2$ and $\n{S(x-P(x)) - (x-P(x))} \le \varepsilon/2$. Note that in addition we may assume $S(0)=0$.
Now consider the map $R = TP + S(Id_X-P) : X \to X$. Clearly $R$ has finite-dimensional range, it has Lipschitz constant at most
$\Lip(T) \Lip(P) + \Lip(S) \Lip(Id_X - P) \le \mu\Lip(P) + \lambda(1 + \Lip(P))$,
for any $y \in Y$ we have $R(y) = T(y) + S(0) = T(y) \in Y$, and for every $x \in K$
\begin{multline*}
\n{R(x) -x} = \n{ TP(x) +S(x-P(x))  - x} \\
\le \n{TP(x) - P(x)} + \n{ S(x-P(x)) - (x-P(x)) } < \varepsilon.
\end{multline*}
\end{proof}

We are now ready for the example.

\begin{proposition}
There exists a separable \pbs{} $(X,Y)$ with the Lipschitz BAP, but without the BAP.
In particular, $(X,Y)$ does not have the Lipschitz-lifting property.
\end{proposition}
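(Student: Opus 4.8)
The plan is to assemble the example from the machinery of this section and then read off the failure of the Lipschitz-lifting property from Theorem~\ref{thm-Lip-BAP-GK}. It suffices to produce a separable Banach space $X$ with the BAP and a closed subspace $Y$ with the BAP such that there is a Lipschitz retraction $P\colon X\to Y$ while the pair $(X,Y)$ fails the BAP. Indeed, by the Godefroy-Kalton theorem \cite[Thm.~5.3]{Godefroy-Kalton-03} each of $X$ and $Y$ then has the Lipschitz BAP, so Proposition~\ref{prop-retraction-implies-LipBAP} shows that $(X,Y)$ has the Lipschitz BAP; combined with the failure of the BAP this already gives the first assertion. The ``in particular'' is then immediate: were $(X,Y)$ to have the Lipschitz-lifting property, Theorem~\ref{thm-Lip-BAP-GK} would force the BAP and the Lipschitz BAP of $(X,Y)$ to coincide, contradicting what we have arranged.

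The whole difficulty is thus concentrated in the construction, and the first thing I would record is that the subspace $Y$ in any such example is necessarily \emph{uncomplemented} in $X$. For if $P_0\colon X\to Y$ were a bounded linear projection, then, combining finite-rank operators $S$ witnessing the BAP of $X$ (applied to the compact sets $K$ and $(Id_X-P_0)(K)$) with finite-rank operators $R$ witnessing the BAP of $Y$ (applied to the compact set $P_0(K)$), the operators $T=RP_0+S(Id_X-P_0)$ would be finite-rank, uniformly bounded, would satisfy $T(Y)\subseteq Y$, and would approximate the identity on $K$; that is, $(X,Y)$ would have the BAP. Hence the example must exhibit a subspace that is uncomplemented yet, nonlinearly, a Lipschitz retract, with both components approximable but no \emph{simultaneous} finite-rank approximation of the identity available; equivalently, by Theorem~\ref{thm-BAP-equivalent-to-simultaneous-splitting}, the Lusky pair \eqref{eqn-Lusky-pair} admits no simultaneous linear lifting. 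This is exactly the content of W.B.~Johnson's example.

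The main obstacle is this construction, and it is worth noting why it is delicate but not impossible. A Lipschitz \emph{section} of the quotient $q\colon X\to X/Y$ would yield, via $x\mapsto x-s(q(x))$, a Lipschitz retraction onto $Y$; but since $X/Y$ is separable it has the Lipschitz-lifting property \cite[Thm.~3.1]{Godefroy-Kalton-03}, so by \cite[Prop.~2.8]{Godefroy-Kalton-03} any such section would upgrade to a linear section and force $Y$ to be complemented (the same phenomenon makes $\ker(\beta_Q)$ complemented in $\fbs(Q)$ for separable $Q$, and makes copies of $c_0$ complemented by Sobczyk's theorem). A Lipschitz retraction onto $Y$, however, need not factor through $q$ and so need not produce a section; this is precisely the slack that allows an uncomplemented subspace to remain a Lipschitz retract, and it is the slack the construction must exploit. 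Verifying in tandem that $P$ is a genuine (nonlinear) Lipschitz retraction onto an uncomplemented $Y$ and that the pair fails the BAP is the heart of the matter; once these are in hand, the combination of Proposition~\ref{prop-retraction-implies-LipBAP}, the Godefroy-Kalton theorem, and Theorem~\ref{thm-Lip-BAP-GK} closes the argument as in the first paragraph.
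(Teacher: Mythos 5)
Your reduction is sound: granting a separable $X$ with BAP, a closed subspace $Y$ with BAP, a Lipschitz retraction $P\colon X\to Y$, and the failure of the BAP for the pair $(X,Y)$, the combination of the Godefroy--Kalton theorem, Proposition~\ref{prop-retraction-implies-LipBAP}, and Theorem~\ref{thm-Lip-BAP-GK} does finish the proof exactly as you describe, and your observation that $Y$ must be uncomplemented is a correct and useful sanity check. The problem is that you never actually produce the example. The proposition is an existence statement, and your proposal explicitly defers ``the heart of the matter'' --- exhibiting $X$, $Y$, and $P$ and verifying that the pair fails the BAP --- without carrying it out. As written, this is a genuine gap, not a stylistic omission.

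For the record, the construction is short. Take $Z$ a separable Banach lattice without the approximation property \cite{Szankowski}, let $(E_n)$ be a paving of $Z$, and set $X=c(E_n)$, $Y=c_0(E_n)$. Both spaces are separable with the BAP, hence with the Lipschitz BAP. The Lipschitz retraction does not come from an ad hoc nonlinear construction but from Kalton's theorem \cite[Thm.~5.2]{Kalton-Uniform-Structure}: there is a Lipschitz retraction from $Y^{**}=\ell_\infty(E_n)$ onto $Y=c_0(E_n)$, and restricting it to $c(E_n)$ gives $P\colon X\to Y$. Finally, the failure of the BAP for the pair is read off from the quotient: by \cite[Cor.~1.2]{Figiel-Johnson-Pelczynski} the BAP for $(X,Y)$ would pass to $X/Y$, which is isometric to $Z$, contradicting Szankowski's choice of $Z$. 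Note that this last step is not the ``simultaneous approximation'' obstruction you gesture at via Theorem~\ref{thm-BAP-equivalent-to-simultaneous-splitting}; it is a direct quotient argument, and without citing some such result your proposal has no mechanism for ruling out the BAP of the pair even after an example is specified.
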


\begin{proof}
Let $Z$ be a separable Banach lattice without the Approximation Property \cite{Szankowski},
and let $(E_n)$ be a paving of $Z$. 
Let $X = c(E_n)$ and $Y = c_0(E_n)$;
clearly $X$ and $Y$ are separable and have the BAP. 
In particular, they both have the Lipschitz BAP.
By \cite[Thm. 5.2]{Kalton-Uniform-Structure}, there is a Lipschitz retraction
from $Y^{**} = \ell_\infty(E_n)$ onto $Y$.
Restricting this map to $X = c(E_n)$ gives a Lipschitz retraction from $X$ onto $Y$, and now
it follows from Proposition \ref{prop-retraction-implies-LipBAP} that
the pair $(X,Y)$ has the Lipschitz BAP.
If the pair $(X,Y)$ had the BAP then $X/Y$ would have the BAP as well by \cite[Cor. 1.2]{Figiel-Johnson-Pelczynski}, but $X/Y$ is isometric to $Z$.
The last part of the conclusion now follows from Theorem \ref{thm-Lip-BAP-GK}.
\end{proof}

%---------------------------------------------------------------------------------------------
%---------------------------------------------------------------------------------------------
%---------------------------------------------------------------------------------------------

\section{BAP for pairs of Lipschitz-free spaces}

The Godefroy-Kalton theorem \cite[Thm. 5.3]{Godefroy-Kalton-03} not only shows the equivalence between the BAP and the Lipschitz BAP for a Banach space $X$, but also that these properties are equivalent to the BAP for the corresponding Lipschitz-free space $\fbs(X)$. We do not know whether a similar result holds for the BAP for pairs.
Nevertheless, our Ando-Choi-Effros theorem respecting subspaces can be used to characterize the BAP for pairs of free spaces over compact metric spaces.
The result is a version for pairs of \cite[Thm. 2.1]{Godefroy-Extensions-and-BAP}.
Before stating it, let us introduce some notation.
If $K \subset M$ are metric spaces, we will always assume that they share the same distinguished point whenever we consider their associated Lipschitz-free spaces or the spaces of Lipschitz functions defined on them.
If $(X,Y)$ is a \pbs{}, we denote
$$
\Lip_0(M,K; X,Y) = \big\{ f \in \Lip_0(M,X) \st f(K) \subset Y \big\}
$$
with the norm inherited from $\Lip_0(M;X)$.
If $K$ and $M$ are compact metric spaces and $T :\Lip_0(M,K; X,Y) \to \Lip_0(M,K; X,Y)$ is a bounded linear operator, we denote by $\n{T}_L$ its norm when both the domain and the codomain are equipped with the Lipschitz norm, and by $\n{T}_{L,\infty}$ its norm when the domain space is equipped with the Lipschitz norm and the range space with the uniform norm.
A subset $S$ of a metric space $M$ is said to be $\varepsilon$-dense if for all $x \in M$, $\inf\{ d(x,s) \st s \in S \} \le \varepsilon$.
 
\begin{theorem}\label{Thm-ext}
Let $K \subset M$ be compact metric spaces.
Let $(K_n)_n$ and $(M_n)_n$ be sequences of finite $\varepsilon_n$-dense subsets of $K$, respectively $M$, containing the distinguished point and with $K_n \subseteq M_n$ and $\lim \varepsilon_n = 0$.
For a function $f$ defined on $M$ (resp. $K$), we denote $R_n(f)$ its restriction to $M_n$.
 The following are equivalent:
 \begin{enumerate}[(i)]
 \item\label{Thm-ext:BAP}
 The pair $\big( \fbs(M), \fbs(K) \big)$ has the $\lambda$-BAP.
 \item\label{Thm-ext:alm-ext-X}
 There exist $\alpha_n \ge 0$ with $\lim \alpha_n = 0$ such that for every \pbs{} $(X,Y)$, there exist linear operators
 $U_n : \Lip_0(M_n,K_n; X,Y) \to \Lip_0(M,K; X,Y)$ 
such that $\n{U_n}_L \le \lambda$ 
 and
 $\n{R_nU_n - Id_{\Lip_0(M,K; X,Y)}}_{L,\infty} \le \alpha_n$.
 \item\label{Thm-ext:alm-ext-R}
There exist linear operators $U_n : \Lip_0(M_n) \to \Lip_0(M)$ such that
$\n{U_n}_L \le \lambda$, the sequence
 $\n{R_n U_n - Id_{\Lip_0(M_n)}}_{L,\infty}$ converges to $0$ and $U_n( J_{K_n} \cap \Lip_0(M_n) ) \subset J_K \cap  \Lip_0(M)$.
 \item \label{Thm-ext:ext-X}
For every \pbs{} $(X,Y)$, there exist $\beta_n \ge 0$ with $\lim \beta_n = 0$ such that for every $1$-Lipschitz function $F : (M_n,K_n) \to (X,Y)$, there exists a $\lambda$-Lipschitz function $H : (M,K) \to (X,Y)$ such that 
 $\n{R_n(H) - F}_{\ell_\infty(M_n,X)} \le \beta_n$.
 \end{enumerate}
 \end{theorem}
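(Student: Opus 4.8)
The plan is to prove the cycle $\eqref{Thm-ext:BAP}\Rightarrow\eqref{Thm-ext:alm-ext-X}\Rightarrow\eqref{Thm-ext:alm-ext-R}\Rightarrow\eqref{Thm-ext:BAP}$, together with $\eqref{Thm-ext:alm-ext-X}\Rightarrow\eqref{Thm-ext:ext-X}\Rightarrow\eqref{Thm-ext:BAP}$. Everything is phrased through the dualities $\Lip_0(M)\equiv\fbs(M)^*$ and $\Lip_0(M_n)\equiv\fbs(M_n)^*$, under which $R_n$ is the adjoint of the isometric inclusion $\iota_n\colon\fbs(M_n)\hookrightarrow\fbs(M)$, while $J_K\cap\Lip_0(M)=\fbs(K)^\perp$ and $J_{K_n}\cap\Lip_0(M_n)=\fbs(K_n)^\perp$. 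Since $\fbs(M_n)$ is finite-dimensional, every linear $U_n\colon\Lip_0(M_n)\to\Lip_0(M)$ is the adjoint of a unique preadjoint $V_n\colon\fbs(M)\to\fbs(M_n)$ with $\n{V_n}=\n{U_n}$; the side condition $U_n(\fbs(K_n)^\perp)\subseteq\fbs(K)^\perp$ is equivalent to $V_n(\fbs(K))\subseteq\fbs(K_n)$, and one computes $\n{R_nU_n-Id}_{L,\infty}=\sup_{x\in M_n}\n{V_n\delta(x)-\delta(x)}_{\fbs(M_n)}$. The same identity holds coordinatewise for vector-valued operators via $\Lip_0(M_n,K_n;X,Y)\equiv\{S\in\L(\fbs(M_n);X)\st S(\fbs(K_n))\subseteq Y\}$. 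I will assume the nets increasing, so that $(\fbs(M_n),\fbs(K_n))$ is a paving of $(\fbs(M),\fbs(K))$.

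For $\eqref{Thm-ext:BAP}\Rightarrow\eqref{Thm-ext:alm-ext-X}$ I would feed this paving into Theorem \ref{thm-BAP-equivalent-to-simultaneous-splitting}: the $\lambda$-BAP of $(\fbs(M),\fbs(K))$ yields a simultaneous lifting of the associated Lusky pair, that is, operators $V_n\colon\fbs(M)\to\fbs(M_n)$ with $\n{V_n}\le\lambda$, $V_n(\fbs(K))\subseteq\fbs(K_n)$ and $\iota_nV_n\to Id$ strongly. As $\delta(M)$ is norm-compact, this convergence is uniform on $\delta(M)$, so $\alpha_n:=\sup_{x\in M}\n{\iota_nV_n\delta(x)-\delta(x)}\to0$. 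For an arbitrary pair $(X,Y)$ I then set $U_n(S)=\overline{S}\circ V_n$, the composition of the linearization of $S\in\Lip_0(M_n,K_n;X,Y)$ with $V_n$, read back as a Lipschitz function on $M$; this is bounded by $\lambda$, maps into $\Lip_0(M,K;X,Y)$ because $\overline{S}V_n(\fbs(K))\subseteq\overline{S}(\fbs(K_n))\subseteq Y$, and satisfies $\n{R_nU_n-Id}_{L,\infty}\le\alpha_n$ with the same, universal, $\alpha_n$.

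The remaining implications are elementary and do not reinvoke the lifting theorem. For $\eqref{Thm-ext:alm-ext-X}\Rightarrow\eqref{Thm-ext:alm-ext-R}$ I would apply $\eqref{Thm-ext:alm-ext-X}$ to the single pair $(X,Y)=(\fbs(M_n),\fbs(K_n))$ at the point $Id_{\fbs(M_n)}$; its preadjoint is exactly the scalar operator required, the universality of $\alpha_n$ forcing $\n{R_nU_n-Id}_{L,\infty}\to0$. Implication $\eqref{Thm-ext:alm-ext-X}\Rightarrow\eqref{Thm-ext:ext-X}$ is the routine passage from operators to a single function, after translating so that $F(0)=0$. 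For both $\eqref{Thm-ext:alm-ext-R}\Rightarrow\eqref{Thm-ext:BAP}$ and $\eqref{Thm-ext:ext-X}\Rightarrow\eqref{Thm-ext:BAP}$ I would produce operators $V_n\colon\fbs(M)\to\fbs(M_n)$ with $\n{V_n}\le\lambda$, $V_n(\fbs(K))\subseteq\fbs(K_n)$ and $\gamma_n:=\sup_{x\in M_n}\n{V_n\delta(x)-\delta(x)}\to0$, and then use $\varepsilon_n$-density to upgrade near-fixing on $M_n$ to uniform convergence on all of $\delta(M)$: for $x\in M$ choose $x'\in M_n$ with $d(x,x')\le\varepsilon_n$ to get $\n{\iota_nV_n\delta(x)-\delta(x)}\le\gamma_n+(1+\lambda)\varepsilon_n$. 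The finite-rank operators $\iota_nV_n$ are then bounded by $\lambda$, converge strongly to $Id$, and send $\fbs(K)$ into $\fbs(K_n)\subseteq\fbs(K)$, which is exactly the $\lambda$-BAP of the pair. The only point needing care is how $\eqref{Thm-ext:ext-X}$ yields such $V_n$ uniformly in $n$: I would apply $\eqref{Thm-ext:ext-X}$ to the fixed pair $(X,Y)=\big(\ell_\infty(\fbs(M_k)),\ell_\infty(\fbs(K_k))\big)$ and to the functions $F_n$ supported in the $n$-th coordinate with $n$-th entry $\delta_{M_n}(x)$, so that one error sequence $\beta_n\to0$ attached to this single pair controls every $n$ at once; reading off the $n$-th coordinate of the extension gives the required $V_n$ with $\gamma_n\le\beta_n$.

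The delicate point, and where I expect the real work to lie, is the first implication $\eqref{Thm-ext:BAP}\Rightarrow\eqref{Thm-ext:alm-ext-X}$: one must manufacture finite-rank operators $V_n$ landing in the prescribed finite-dimensional space $\fbs(M_n)$ with the \emph{sharp} norm bound $\lambda$ and, simultaneously, the subspace constraint $V_n(\fbs(K))\subseteq\fbs(K_n)$. A naive finite-rank approximation of the identity on $\fbs(M)$ only achieves norm $\lambda+\varepsilon$ and does not obviously respect $\fbs(K_n)$; it is precisely here that Theorem \ref{thm-BAP-equivalent-to-simultaneous-splitting}---and behind it the Ando--Choi--Effros theorem respecting subspaces (Theorem \ref{thm-Ando-Choi-Effros-respecting-subspaces}), applied to the simultaneous $M$-ideal of Corollary \ref{cor-example-simultaneous-c0}---earns its keep. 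A secondary technical matter is the reduction to increasing nets: for sequences that are merely $\varepsilon_n$-dense one either refines them or carries out the construction of the $V_n$ directly via $\varepsilon_n$-density together with a slow diagonalization, the implications \emph{into} $\eqref{Thm-ext:BAP}$ being insensitive to nesting.
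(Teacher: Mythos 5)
Your argument is correct and, for the implications \eqref{Thm-ext:BAP}$\Rightarrow$\eqref{Thm-ext:alm-ext-X}, \eqref{Thm-ext:alm-ext-X}$\Rightarrow$\eqref{Thm-ext:ext-X} and \eqref{Thm-ext:ext-X}$\Rightarrow$\eqref{Thm-ext:BAP}, it is essentially the paper's proof: the paper likewise feeds the paving $\big(\fbs(M_n),\fbs(K_n)\big)$ into Theorem \ref{thm-BAP-equivalent-to-simultaneous-splitting}, sets $g_n=\pi_n\circ L\circ\delta_M$ (your $V_n\circ\delta_M$), uses compactness of $\delta(M)$ for the uniform $\alpha_n$, defines $U_nF=\overline{F}\circ g_n$, and proves \eqref{Thm-ext:ext-X}$\Rightarrow$\eqref{Thm-ext:BAP} with exactly your $\ell_\infty\big(\fbs(M_n)\big)$ device. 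The one genuine divergence is condition \eqref{Thm-ext:alm-ext-R}: the paper proves \eqref{Thm-ext:BAP}$\Rightarrow$\eqref{Thm-ext:alm-ext-R} directly by rerunning the construction with scalar target, whereas you derive \eqref{Thm-ext:alm-ext-R} from \eqref{Thm-ext:alm-ext-X} by specializing, for each $n$, to the pair $(X,Y)=\big(\fbs(M_n),\fbs(K_n)\big)$ and evaluating at $\delta_{M_n}$, the universality of $\alpha_n$ making the diagonal work. This is a nice point: the paper's closing remark observes that \eqref{Thm-ext:alm-ext-R} does \emph{not} follow from \eqref{Thm-ext:alm-ext-X} by specializing to $X=\R$, and your argument shows it nonetheless follows formally once one specializes to the right (varying) pair, so the cycle \eqref{Thm-ext:BAP}$\Rightarrow$\eqref{Thm-ext:alm-ext-X}$\Rightarrow$\eqref{Thm-ext:alm-ext-R}$\Rightarrow$\eqref{Thm-ext:BAP} closes without invoking the lifting theorem a second time. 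Your proof of \eqref{Thm-ext:alm-ext-R}$\Rightarrow$\eqref{Thm-ext:BAP} is also marginally cleaner: you take the preadjoint $V_n$ of $U_n$ (which exists because $\fbs(M_n)$ is finite-dimensional) and use $\varepsilon_n$-density to get strong operator convergence of $\iota_nV_n$ outright, while the paper passes through $T_n=U_nR_n$ and weak operator convergence of $A_n=T_n{}_*$ before appealing to standard arguments; the two are the same operator in disguise. Finally, your assumption that the sets $M_n$, $K_n$ are increasing is also made (implicitly) in the paper's proof, which asserts that $\big(\fbs(M_n)\big)_n$ is an increasing sequence, so this is not a gap relative to the paper.
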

 
 \begin{proof}
\eqref{Thm-ext:BAP} $\Rightarrow$ \eqref{Thm-ext:alm-ext-X}
Note that $\big(\fbs(K_n)\big)_n$ and $\big(\fbs(M_n)\big)_n$ are increasing sequences of finite-dimensional subspaces of $\fbs(K)$, resp. $\fbs(M)$, with the former having dense union in $\fbs(K)$ and the latter in $\fbs(M)$. Moreover, $\fbs(K_n) \subseteq \fbs(M_n)$ for each $n\in\N$.
By Theorem \ref{thm-BAP-equivalent-to-simultaneous-splitting},
there exists a simultaneous linear lifting $L : \big( \fbs(M), \fbs(K) \big) \to \big( c\big(\fbs(M_n)\big), c\big(\fbs(K_n)\big) \big)$ with $\n{L} \le \lambda$.
Let $\pi_n :c\big(\fbs(M_n)\big) \to \fbs(M_n)$ be the canonical projection.
Define $g_n := \pi_n \circ L \circ \delta_M : M \to \fbs(M_n)$.
Note that $g_n(K) \subset \fbs(K_n)$.
The maps $g_n$ are clearly $\lambda$-Lipschitz, and for every $x \in M$ we have 
$\lim \n{g_n(x) - \delta_M(x)} = 0$.
Since $M$ is compact, this implies by an equicontinuity argument that if we let
$$
\alpha_n = \sup_{x\in M} \n{ g_n(x) - \delta_M(x) }_{\fbs(M)}
$$
then $\lim \alpha_n = 0$.
Let now $X$ be a Banach space, and $F : (M_n,K_n) \to (X,Y)$ be a Lipschitz map. By the universal property of the free space, there exists a unique bounded linear map $\overline{F} : \fbs(M_n) \to X$ such that $\overline{F} \circ \delta_{M_n} = F$ and $\n{\overline{F}} = \Lip(F)$. Note that $\overline{F}$ depends linearly on $F$. We now define $U_nF = \overline{F} \circ g_n$.
Since $g_n(K) \subset \fbs(K_n)$, it follows that
$(U_nF)(K) = \overline{F}( g_n(K) ) \subset \overline{F}( \fbs(K_n) ) \subset Y$, where the last equality follows from the fact that $F(K_n) \subset Y$.
Thus $U_n$ defines a map from $\Lip_0(M_n,K_n; X,Y)$ to $\Lip_0(M,K; X,Y)$,
and it is easy to see that the sequence $U_n$ satisfies the requirements of \eqref{Thm-ext:alm-ext-X}.

 \item\eqref{Thm-ext:BAP} $\Rightarrow$ \eqref{Thm-ext:alm-ext-R}
Construct the operators $U_n$ as in the proof of the previous implication, with $X = Y = \R$.
Now let $F \in  J_{K_n} \cap \Lip_0(M_n)$. It follows that $\overline{F}$ is identically zero on $\fbs(K_n)$, and thus $U_nF$ vanishes on $K$ (since $U_nF = \overline{F} \circ g_n$ and $g_n(K) \subset \fbs(K_n)$).

\eqref{Thm-ext:alm-ext-X} $\Rightarrow$ \eqref{Thm-ext:ext-X}
It suffices to take $H = U_n(F)$ and $\beta_n = \alpha_n$.

\eqref{Thm-ext:alm-ext-R} $\Rightarrow$ \eqref{Thm-ext:BAP}
Let $\gamma_n = \n{R_n U_n - Id_{\Lip_0(M_n)}}_{L,\infty} $, so that $\lim \gamma_n = 0$.
If $H \in \Lip_0(M)$, then
$$
\n{ R_n\big[ U_nR_n(H) -H \big] }_{\ell_\infty(M_n)} = \n{ R_nU_nR_n(H) -R_n(H) }_{\ell_\infty(M_n)} \le \gamma_n \n{H}_L
$$
Now let $T_n = U_n \circ R_n : \Lip_0(M) \to \Lip_0(M)$. Note that $\n{T_n}_L \le \lambda$, and since $M_n$ is $\varepsilon_n$-dense in $M$ with $\lim \varepsilon_n = 0$, it follows that for every $H \in \Lip(M)$ one has
$$
\lim \n{T_n(H) - H}_{\ell_\infty(M)} = 0.
$$
The operator $R_n$ is a finite rank operator which is weak$^*$-to-norm continuous, hence so is $T_n = U_n \circ R_n$. In particular, there exists $A_n : \fbs(M) \to \fbs(M)$ such that $A_n^* = T_n$.
It is clear that $\n{A_n}_{\fbs(M)} \le \lambda$, and that the sequence $(A_n)$ converges to the identity for the weak operator topology.
Moreover, let $\mu \in \fbs(K)$ and $f \in \fbs(K)^\perp = J_K \cap \Lip_0(M)$.
Then
$$
\pair{f}{A_n\mu} = \pair{T_n f}{\mu} = \pair{U_nR_nf}{ \mu} = 0,
$$
since $R_n(f) \in J_{K_n} \cap \Lip_0(M_n)$ and $U_n( J_{K_n} \cap \Lip_0(M_n) ) \subset J_K \cap \Lip_0(M)$.
Therefore, $A_n(\fbs(K)) \subset \fbs(K)$.
This shows \eqref{Thm-ext:BAP}.

\eqref{Thm-ext:ext-X} $\Rightarrow$ \eqref{Thm-ext:BAP}
Consider the space $X = \ell_\infty( \fbs(M_n) )$, its closed subspace $Y =  \ell_\infty( \fbs(K_n) )$ and the maps $F_n := j_n \circ \delta_{M_n} :  (M_n, K_n) \to (X,Y)$ where $j_n : \fbs(M_n) \to X$ is the obvious injection ($(j_n(\mu))_k = 0$ for $k \not= n$ and $(j_n(\mu))_n = \mu$).
Note that each $F_n$ is an isometric injection.
By \eqref{Thm-ext:ext-X}, there exist $\lambda$-Lipschitz maps $H_n : (M,K) \to (X,Y)$ such that
 $\n{R_n(H_n) - F_n}_{\ell_\infty(M_n,X)} \le \beta_n$.
 Let $S_n := P_n \circ H_n : (M,K) \to \big( \fbs(M_n), \fbs(K_n) \big)$, where $P_n : (X,Y) \to \big( \fbs(M_n), \fbs(K_n) \big)$ is the canonical projection onto the $n$-th coordinate.
 Note that each $S_n$ is a Lipschitz map with $\Lip(S_n) \le \lambda$, and thus it has a linear extension
 $\overline{S_n} :  \big( \fbs(M), \fbs(K) \big) \to \big( \fbs(M_n), \fbs(K_n) \big)$ with $\bign{ \overline{S_n} } \le \lambda$.
Moreover, since for each $x \in M_n$ one has $P_n F_n(x) = \delta_{M_n}(x)$ we have $\n{ S_n(x) - \delta_{M_n}(x) }_{\fbs(M_n)} \le \beta_n$.
If we now consider the operators $J_n \circ \overline{S_n} : \big( \fbs(M), \fbs(K) \big) \to \big( \fbs(M), \fbs(K) \big)$, where $J_n : \big( \fbs(M_n), \fbs(K_n) \big) \to \big( \fbs(M), \fbs(K) \big)$ is the canonical injection, it follows from the above estimates that
the sequence $(J_n \circ \overline{S_n})_n$ converges to the identity of $\fbs(M)$ in the strong operator topology; this proves \eqref{Thm-ext:BAP}.
 \end{proof}
 
 \begin{remark}
In the version for a single space of Theorem \ref{Thm-ext} \cite[Thm. 2.1]{Godefroy-Extensions-and-BAP}, the condition corresponding to \eqref{Thm-ext:alm-ext-R} follows formally from the condition corresponding to \eqref{Thm-ext:alm-ext-X} by specializing to $X=\R$. That is not the case in our version for pairs, since such specialization only gives the BAP for $\fbs(M)$.
 \end{remark}

Our next closely related result
characterizes the BAP for a pair of Lipschitz-free spaces over separable metric spaces, in terms of a Lipschitz version of the principle of local reflexivity respecting subspaces: it is an adaptation to pairs of \cite[Thm. 2]{Godefroy-Ozawa}.
Before the theorem, we will need some terminology including a refinement of the concept of a simultaneous $M$-ideal. The Ando-Choi-Effros Theorem respecting subspaces will not play a direct role here, but some similar ideas will indeed be involved.

Let $M_2 \subset M_1$ be separable metric spaces and $X_2 \subset X_1$ complete metric spaces.
We denote by $\Lip^\lambda(M_1, M_2 ; X_1, X_2)$ the set of all $\lambda$-Lipschitz maps $f : (M_1,M_2) \to \X$.
Let us fix a dense sequence $(x_n)_n$ in $M$ and define a metric $d$ on  $\Lip^\lambda(M_1, M_2 ; X_1, X_2)$ by
$$
d(f,g) = \sum_{n=1}^\infty \min \big\{  d(f(x_n),g(x_n)), 2^{-n} \big\}. 
$$
Observe that $d$ is a complete metric on $\Lip^\lambda(M_1, M_2 ; X_1, X_2)$, whose induced topology is the topology of pointwise convergence.
 
Let $\J \subset \X$ be \pbss{}, and let $Q : X_1 \to X_1/J_1$ be the canonical projection. 
We say that $\J$ is a \emph{simultaneous $M$-ideal with approximate unit} (simultaneous $M$-iwau for short) if there are nets of bounded linear operators $\phi_\alpha : \X \to \J$ and $\psi_\alpha : \X \to \X$ such that $\phi_\alpha(x) \to x$ for every $x \in J_1$, $Q \circ \psi_\alpha = Q$ for all $\alpha$, $\phi_\alpha + \psi_\alpha \to Id_{X_1}$ pointwise, and $\n{\phi_\alpha(x) + \psi_\alpha(y)} \le \max\{ \n{x}, \n{y}\}$ for any $x,y \in X_1$ and all $\alpha$.
 
 \begin{lemma}\label{M-iwau-is-M-ideal}
 If $\J \subset \X$ is a simultaneous $M$-iwau, then it is a simultaneous $M$-ideal.
 \end{lemma}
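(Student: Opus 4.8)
The plan is to reduce everything to the geometric characterization of Theorem \ref{thm-geo-charac}: I will verify condition \eqref{thm-geo-restricted-3ball}, which together with the fact that $J_1$ is an $M$-ideal in $X_1$ is exactly what that theorem needs to conclude that $\J$ is a simultaneous $M$-ideal. The only operators I will use are the $\phi_\alpha$, and the decisive point will be that, since $\phi_\alpha : \X \to \J$ respects subspaces, the test element $\phi_\alpha(x)$ lands in $J_2$ as soon as $x \in X_2$.

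First I would establish the single-space part, namely that $J_1$ is an $M$-ideal in $X_1$. Given $y_1,y_2,y_3 \in B_{J_1}$, $x \in B_{X_1}$ and $\eps>0$, I claim that $y := \phi_\alpha(x) \in J_1$ works for $\alpha$ large enough. The identity
$$
x + y_i - \phi_\alpha(x) = \big(\psi_\alpha(x)+\phi_\alpha(y_i)\big) + \big(x-\phi_\alpha(x)-\psi_\alpha(x)\big) + \big(y_i-\phi_\alpha(y_i)\big)
$$
splits the target vector into a controlled piece and two error terms. The first bracket satisfies $\n{\phi_\alpha(y_i)+\psi_\alpha(x)} \le \max\{\n{y_i},\n{x}\} \le 1$ by the defining $M$-estimate; the second bracket tends to $0$ because $\phi_\alpha+\psi_\alpha \to Id_{X_1}$ pointwise at $x$; and the third tends to $0$ because $\phi_\alpha(y_i)\to y_i$, as $y_i \in J_1$. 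Since there are only finitely many indices $i$ and the net is directed, a single $\alpha$ beyond all the relevant thresholds makes $\n{x+y_i-y}\le 1+\eps$ for $i=1,2,3$ simultaneously. This is the restricted $3$-ball property of \cite[Thm. I.2.2]{Harmand-Werner-Werner}, so $J_1$ is an $M$-ideal in $X_1$.

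Next, exactly the same computation verifies condition \eqref{thm-geo-restricted-3ball}. Now I take $y_1,y_2,y_3 \in B_{J_1}$ but $x \in B_{X_2}$; the decomposition above is unchanged and yields the same bound $\n{x+y_i-\phi_\alpha(x)}\le 1+\eps$. The one new observation is that, because $x \in X_2$ and $\phi_\alpha$ maps $\X$ into $\J$, we have $\phi_\alpha(x) \in J_2$, so the produced element $y = \phi_\alpha(x)$ lies in $J_2$ as required. This is precisely where the ``simultaneous'' hypothesis on the $\phi_\alpha$ is used, and it is the heart of the matter.

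Having verified both that $J_1$ is an $M$-ideal in $X_1$ and condition \eqref{thm-geo-restricted-3ball}, Theorem \ref{thm-geo-charac} applies and gives that $\J$ is a simultaneous $M$-ideal in $\X$; note that the proof of that theorem extracts from \eqref{thm-geo-restricted-3ball} both that $J_2$ is an $M$-ideal in $X_2$ and the compatibility of the associated $L$-projections, so nothing further is required. I do not expect a genuine obstacle: the entire content is choosing the correct test element $y=\phi_\alpha(x)$ and reading off where it lands. The only points deserving care are that a single $\alpha$ must handle the three indices at once, and that the $M$-estimate together with $\phi_\alpha+\psi_\alpha\to Id_{X_1}$ and $\phi_\alpha \to Id$ on $J_1$ carry all the weight, while the condition $Q\circ\psi_\alpha = Q$ is interestingly not needed for this lemma.
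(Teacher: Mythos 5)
Your proof is correct and follows essentially the same route as the paper: the identical decomposition of $x+y_i-\phi_\alpha(x)$ into the $M$-estimate term $\psi_\alpha(x)+\phi_\alpha(y_i)$ plus two vanishing error terms, with $y=\phi_\alpha(x)$ landing in $J_2$ when $x\in X_2$, followed by an appeal to condition \eqref{thm-geo-restricted-3ball} of Theorem \ref{thm-geo-charac}. Your closing observation that $Q\circ\psi_\alpha=Q$ is not used here is also made in the paper, in the remark immediately following the lemma.
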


\begin{proof} 
Let $y_1, y_2, y_3 \in B_{J_1}$, $x \in B_{X_1}$ and $\varepsilon>0$.
Choose $\alpha_0$ large enough so that
$$
\n{x - \phi_{\alpha_0} x - \psi_{\alpha_0}x } \le \varepsilon/2, \qquad \n{ \phi_{\alpha_0}y_i - y_i } \le \varepsilon/2, \quad i=1,2,3
$$
Set $y = \phi_{\alpha_0} x \in J_1$. Then
\begin{align*}
\n{x+y_i-y} = \n{x - \phi_{\alpha_0}x + y_i} &\le \n{x - \phi_{\alpha_0}x -\psi_{\alpha_0}x} + \n{\psi_{\alpha_0}x + \phi_{\alpha_0}y_i} + \n{-\phi_{\alpha_0}y_i + y_i} \\
&\le \varepsilon/2 + \max\{ \n{x}, \n{y_i} \} + \varepsilon/2 \le 1+\varepsilon.
\end{align*}
This proves that $J_1$ is an $M$-ideal in $X_1$ \cite[Thm. I.2.2]{Harmand-Werner-Werner}.
If we assume that $x \in B_{X_2}$ then $y = \phi_{\alpha_0} x \in J_2$, so the same argument gives the desired conclusion by Theorem \ref{thm-geo-charac}.
\end{proof}
 
Notice that our habitual example is in fact an $M$-iwau: if $(E_n,F_n)$ is a paving of a \pbs{} $\X$, then $\big(c_0(E_n),c_0(F_n)\big)$ is a simultaneous $M$-iwau in $\big(c(E_n),c(F_n)\big)$, with
 \begin{align*}
 \phi_k( (x_n)_n ) &= (x_1, \dotsc, x_k, 0,0, \dotsc),\\
 \psi_k( (x_n)_n ) &= (0, \dotsc, 0, x_{k+1}, x_{k+2}, \dotsc).
 \end{align*}
 Notice also that the proof of Lemma \ref{M-iwau-is-M-ideal} did not make use of the condition $Q \circ \psi_\alpha = Q$; the importance of that condition will be apparent in the next Lemma. One important consequence of Lemma \ref{M-iwau-is-M-ideal} is that the equality $Q \circ \psi_\alpha = Q$ not only holds as a mapping $X_1 \to X_1/J_1$, but in fact as a mapping $\X \to (X_1/J_1,X_2/J_2)$.
The proof of \cite[Lemma 1]{Godefroy-Ozawa} now gives, verbatim, the following.
 
 \begin{lemma}\label{Lemma-Arveson-kind-of-ACE}
 Let $\J$ be a simultaneous $M$-iwau in $\X$, and $M_2 \subset M_1$ separable metric spaces. Then for every $\lambda \ge 1$ the set
 $$
 \{ Q \circ f \st f \in \Lip^\lambda(M_1, M_2 ; X_1, X_2) \} \subset \Lip^\lambda(M_1, M_2 ; X_1/J_1, X_2/J_2)
 $$
is closed under the topology of pointwise convergence.
 \end{lemma}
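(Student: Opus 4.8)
The plan is to run an Ando-style successive-approximation scheme, exploiting the fact (emphasized just before the statement) that $Q\circ\psi_\alpha = Q$ while $Q\circ\phi_\alpha = 0$, together with the contractivity $\n{\phi_\alpha},\n{\psi_\alpha}\le 1$ (which follows from the max-norm condition by feeding in a zero argument) and the asymptotics $\phi_\alpha+\psi_\alpha\to Id_{X_1}$ pointwise, $\phi_\alpha\to Id_{X_1}$ on $J_1$, and hence $\psi_\alpha\to 0$ pointwise on $J_1$. Since $M_1$ is separable, the topology of pointwise convergence on $\Lip^\lambda(M_1, M_2 ; X_1, X_2)$ is induced by the complete metric $d$, so it suffices to take a sequence $(f_k)$ in $\Lip^\lambda(M_1, M_2 ; X_1, X_2)$ with $g_k := Q\circ f_k\to g$ pointwise and to produce $f\in \Lip^\lambda(M_1, M_2 ; X_1, X_2)$ with $Q\circ f = g$.

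First I would fix a dense sequence $(x_m)$ in $M_1$ and, by a diagonal argument, pass to a subsequence of $(f_k)$ (relabelled $(f_n)$) so that $\n{g_n(x_m)-g_{n-1}(x_m)}\le 2^{-n}$ for all $m\le n$. Then I would build the lift as a pointwise limit of a sequence $(h_n)$ defined recursively by $h_0=0$ and
\[
h_n = \psi_{\alpha_n}\circ f_n + \phi_{\alpha_n}\circ h_{n-1},
\]
where the indices $\alpha_n$ are chosen large in a sense made precise below. The three structural properties are immediate by induction: using $Q\psi_\alpha=Q$ and $Q\phi_\alpha=0$ one gets $Q\circ h_n = g_n$; the max-norm inequality applied to the increments $h_n(x)-h_n(x')=\phi_{\alpha_n}(h_{n-1}(x)-h_{n-1}(x'))+\psi_{\alpha_n}(f_n(x)-f_n(x'))$ gives $\Lip(h_n)\le\max\{\lambda,\Lip(h_{n-1})\}\le\lambda$; and since $\psi_\alpha$ maps the pair into itself while $\phi_\alpha$ maps it into $\J$, one has $h_n(M_2)\subset X_2$.

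The heart of the argument is the choice of $\alpha_n$ making $(h_n)$ pointwise Cauchy. Writing
\[
h_n(x)-h_{n-1}(x) = \psi_{\alpha_n}\big(f_n(x)-h_{n-1}(x)\big) + (\phi_{\alpha_n}+\psi_{\alpha_n}-Id_{X_1})\big(h_{n-1}(x)\big),
\]
I would control the two summands separately on the finitely many points $x_1,\dots,x_n$. The second term is made $\le 2^{-n}$ by taking $\alpha_n$ large, using $\phi_\alpha+\psi_\alpha\to Id_{X_1}$ pointwise. For the first term, observe that $Q(f_n(x)-h_{n-1}(x)) = g_n(x)-g_{n-1}(x)$ is small by the subsequence choice, so the definition of the quotient norm lets me write $f_n(x)-h_{n-1}(x)=j_n(x)+r_n(x)$ with $j_n(x)\in J_1$ and $\n{r_n(x)}$ of order $2^{-n}$; since $\n{\psi_{\alpha_n}}\le 1$ and $\psi_\alpha\to 0$ pointwise on $J_1$, enlarging $\alpha_n$ further (to handle the finitely many vectors $j_n(x_m)$, $m\le n$) makes $\n{\psi_{\alpha_n}(f_n(x_m)-h_{n-1}(x_m))}$ of order $2^{-n}$ as well. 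Thus $\sum_n\n{h_n(x_m)-h_{n-1}(x_m)}<\infty$ for every $m$, so $(h_n(x_m))_n$ converges.

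Finally, the uniform bound $\Lip(h_n)\le\lambda$ upgrades pointwise convergence on the dense set $(x_m)$ to pointwise convergence on all of $M_1$, with a $\lambda$-Lipschitz limit $f$; completeness of $X_1$ and closedness of $X_2$ place $f$ in $\Lip^\lambda(M_1, M_2 ; X_1, X_2)$, and continuity of $Q$ gives $Q\circ f = \lim_n g_n = g$. I expect the main obstacle to be exactly the estimate on $\psi_{\alpha_n}(f_n(x)-h_{n-1}(x))$: this is where the simultaneous $M$-iwau structure is genuinely used, since it is only because consecutive quotient values are close that the difference lies near $J_1$, where $\psi_\alpha$ vanishes in the limit. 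It is precisely here that the condition $Q\circ\psi_\alpha=Q$ becomes indispensable, as foreshadowed in the remark preceding the statement.
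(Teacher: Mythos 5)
Your proposal is correct and is essentially the paper's proof: the paper simply invokes the proof of Lemma~1 of Godefroy--Ozawa ``verbatim,'' and that proof is exactly your Ando-style iteration $h_n=\psi_{\alpha_n}\circ f_n+\phi_{\alpha_n}\circ h_{n-1}$, with the quotient identity $Q\circ\psi_\alpha=Q$, the max-norm inequality for the Lipschitz bound, and the decomposition of $f_n(x_m)-h_{n-1}(x_m)$ into a $J_1$-part (killed asymptotically by $\psi_\alpha$) plus a small remainder. The only adaptation to pairs --- that $\phi_\alpha$ lands in $J_2$ and $\psi_\alpha$ preserves $X_2$, so $h_n(M_2)\subset X_2$ and the limit respects the subspace --- is exactly the point you check, so nothing is missing.
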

 
 The proof of \cite[Thm. 2]{Godefroy-Ozawa} can now be adapted in a straightforward manner to prove the Theorem below. As would be expected, the part of the proof that uses the principle of local reflexivity now requires the version respecting subspaces \cite{Oja-PLR-respecting-subspaces}.
 
 \begin{theorem}
 Let $M_2 \subset M_1$ be separable metric spaces, and $\lambda \ge 1$.
 Then the pair $\big( \fbs(M_1), \fbs(M_2) \big)$ has the $\lambda$-BAP if and only if the following holds:
 for every \pbs{} $\X$ and any $f \in \Lip^1(M_1,M_2; X_1^{**},X_2^{**})$, there is a net in $\Lip^\lambda(M_1,M_2; X_1,X_2)$ which converges to $f$ in the pointwise-weak$^*$ topology.
 \end{theorem}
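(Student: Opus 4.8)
The plan is to follow the proof of \cite[Thm.~2]{Godefroy-Ozawa} step by step, inserting the subspace-respecting refinements at the two places where linear information passes between a space and its bidual: the principle of local reflexivity in the necessity direction, and the $M$-ideal structure in the sufficiency direction. Throughout, fix finite $\eps_n$-dense subsets $B_n \subseteq A_n$ of $M_2 \subseteq M_1$ with $B_n \subseteq B_{n+1}$, $A_n \subseteq A_{n+1}$ and $\eps_n \to 0$, and set $E_n = \fbs(A_n)$, $F_n = \fbs(B_n)$, so that $(E_n,F_n)$ is a paving of $\big(\fbs(M_1),\fbs(M_2)\big)$ and, by Corollary~\ref{cor-example-simultaneous-c0} together with the discussion following Lemma~\ref{M-iwau-is-M-ideal}, the pair $\big(c_0(E_n),c_0(F_n)\big)$ is a simultaneous $M$-iwau, hence a simultaneous $M$-ideal, in $\big(c(E_n),c(F_n)\big)$.

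For necessity, assume the pair has $\lambda$-BAP. By definition this yields a net $(T_\alpha)$ of finite-rank operators on $\fbs(M_1)$ with $\n{T_\alpha} \le \lambda$, each carrying $\fbs(M_2)$ into itself, and $T_\alpha \to Id$ in the strong operator topology. Given a \pbs{} $\X$ and $f \in \Lip^1(M_1,M_2;X_1^{**},X_2^{**})$, linearise to $\overline f : \fbs(M_1) \to X_1^{**}$ with $\bign{\overline f} \le 1$ and $\overline f(\fbs(M_2)) \subseteq X_2^{**}$. Then $\overline f \circ T_\alpha$ is a finite-rank operator into $X_1^{**}$ carrying $\fbs(M_2)$ into $X_2^{**}$, and Oja's principle of local reflexivity respecting subspaces \cite{Oja-PLR-respecting-subspaces} (invoked exactly as in Theorem~\ref{thm-PLRRS-a-la-Dean}) produces operators $v_{\alpha,\beta} : \fbs(M_1) \to X_1$ with $\n{v_{\alpha,\beta}} \le \lambda$ and $v_{\alpha,\beta}(\fbs(M_2)) \subseteq X_2$ converging to $\overline f\circ T_\alpha$ pointwise in the weak$^*$ topology. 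Composing with $\delta_{M_1}$ and passing to the iterated net gives elements of $\Lip^\lambda(M_1,M_2;X_1,X_2)$ converging to $f$ pointwise-weak$^*$.

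For sufficiency, assume the stated approximation property and let $Q : c(E_n) \to c(E_n)/c_0(E_n) \equiv \fbs(M_1)$ be the quotient map, which carries $c(F_n)$ onto $\fbs(M_2)$. Since $\big(c_0(E_n),c_0(F_n)\big)$ is a simultaneous $M$-ideal, Remark~\ref{remark-simultaneous-Mideal} provides a simultaneous $M$-summand decomposition $c(E_n)^{**} \equiv \ell_\infty(E_n) \oplus_\infty \fbs(M_1)^{**}$ compatible with the corresponding decomposition of $c(F_n)^{**}$; the second summand is an isometric section of $Q^{**}$ respecting the subspaces. Composing this section with the canonical embedding $\fbs(M_1) \hookrightarrow \fbs(M_1)^{**}$ and with $\delta_{M_1}$ yields a map $\Phi \in \Lip^1(M_1,M_2;c(E_n)^{**},c(F_n)^{**})$ whose image under $Q^{**}$ is the canonical image of $\delta_{M_1}$. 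Applying the hypothesis to $\X = \big(c(E_n),c(F_n)\big)$ and $\Phi$ produces a net $(f_\gamma)$ in $\Lip^\lambda(M_1,M_2;c(E_n),c(F_n))$ with $f_\gamma \to \Phi$ pointwise-weak$^*$; weak$^*$-continuity of $Q^{**}$ then gives $Q\circ f_\gamma \to \delta_{M_1}$ pointwise-weakly in $\fbs(M_1)$. The set $\mathcal S = \{\, Q\circ f : f \in \Lip^\lambda(M_1,M_2;c(E_n),c(F_n))\,\}$ is convex and, by Lemma~\ref{Lemma-Arveson-kind-of-ACE}, closed for the topology of pointwise-norm convergence; as the pointwise-norm and pointwise-weak topologies share the same continuous linear functionals, Mazur's theorem shows $\mathcal S$ is also pointwise-weakly closed, whence $\delta_{M_1} \in \mathcal S$. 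Thus there is $g \in \Lip^\lambda(M_1,M_2;c(E_n),c(F_n))$ with $Q\circ g = \delta_{M_1}$; linearising $g$ produces a simultaneous linear lifting of the Lusky pair \eqref{eqn-Lusky-pair} of norm at most $\lambda$, and Theorem~\ref{thm-BAP-equivalent-to-simultaneous-splitting} yields the $\lambda$-BAP.

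The main obstacle is to keep the subspace respected at every transfer between a space and its bidual. Concretely, one must verify that Oja's local reflexivity respecting subspaces indeed returns approximating operators sending $\fbs(M_2)$ into $X_2$ rather than merely into $X_2^{**}$, that the $M$-summand section of $Q^{**}$ furnished by Remark~\ref{remark-simultaneous-Mideal} is genuinely simultaneous so that $\Phi$ carries $M_2$ into $c(F_n)^{**}$, and---most delicately---that membership in $\mathcal S$ survives the passage from the pointwise-weak$^*$ convergence supplied by the hypothesis to the pointwise-norm convergence governed by Lemma~\ref{Lemma-Arveson-kind-of-ACE}; this last point is precisely where the convexity of $\mathcal S$ and Mazur's theorem are needed.
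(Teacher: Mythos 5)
Your proposal is correct and is essentially the proof the paper intends: the paper itself only says that \cite[Thm.~2]{Godefroy-Ozawa} "can be adapted in a straightforward manner," using Oja's principle of local reflexivity respecting subspaces in the necessity direction and Lemma~\ref{Lemma-Arveson-kind-of-ACE} together with the simultaneous $M$-iwau structure of $\big(c_0(E_n),c_0(F_n)\big)$ and Theorem~\ref{thm-BAP-equivalent-to-simultaneous-splitting} in the sufficiency direction, exactly as you do. The only cosmetic point is that the local reflexivity step yields operators of norm at most $(1+\eps)\lambda$ rather than $\lambda$, so one should rescale by $(1+\eps)^{-1}$ before passing to the iterated net.
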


 Recently, a characterization of the BAP for the free space over a compact metric space has appeared in \cite[Thm. 2.19]{Ambrosio-Puglisi}.
Their proof actually follows from general principles, which we illustrate by proving a version for pairs. 
In the language of  \cite{Ambrosio-Puglisi}, condition \eqref{AmPu-2} below could be paraphrased as the existence of an ``asymptotic simultaneous $\lambda$-random projection''.
 
 \begin{theorem}
  Let $K \subset M$ be compact metric spaces.
 Let $(K_n)_n$ and $(M_n)_n$ be sequences of finite subsets of $K$, respectively $M$, with $K_n \subseteq M_n$, $\bigcup_n K_n$ dense in $K$ and $\bigcup_n M_n$ dense in $M$.
The following are equivalent:
 \begin{enumerate}[(i)]
 \item\label{AmPu-1} The pair $\big( \fbs(M), \fbs(K) \big)$ has the $\lambda$-BAP.
 \item\label{AmPu-2} For every $n \in \N$, there exists $\nu_n : X \to \fbs(M_n)$ such that $\nu_n(K) \subset \fbs(K_n)$ and
 	\begin{enumerate}[(a)]
	\item $\lim_n \n{ \nu_n(x) - \delta_X(x) }_{\fbs(M)} = 0$ for every $x \in \bigcup_n M_n$.
	\item $\nu_n$ is $\lambda$-Lipschitz.
	\end{enumerate}
 \end{enumerate}
 \end{theorem}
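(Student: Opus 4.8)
The plan is to observe that both implications repackage machinery already in hand: the forward direction is essentially the construction from the proof of Theorem \ref{Thm-ext}, while the converse is a routine free-space linearization. Following the convention used in the proof of Theorem \ref{Thm-ext} (and as the paving framework requires), I take the sets $M_n$ and $K_n$ to be increasing, so that $\big(\fbs(M_n),\fbs(K_n)\big)_n$ is a paving of the pair $\big(\fbs(M),\fbs(K)\big)$. (Here $X$ and $\delta_X$ in condition \eqref{AmPu-2} are read as $M$ and $\delta_M$.)

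For \eqref{AmPu-1} $\Rightarrow$ \eqref{AmPu-2}, I would invoke Theorem \ref{thm-BAP-equivalent-to-simultaneous-splitting} to obtain, from the $\lambda$-BAP of the pair, a simultaneous linear lifting $L : \big(\fbs(M),\fbs(K)\big) \to \big(c(\fbs(M_n)),c(\fbs(K_n))\big)$ with $\n{L}\le\lambda$. Writing $\pi_n$ for the $n$-th coordinate projection, I set $\nu_n := \pi_n\circ L\circ\delta_M$, exactly as the maps $g_n$ were built in the proof of Theorem \ref{Thm-ext}. Each $\nu_n$ is $\lambda$-Lipschitz as a composition of the $1$-Lipschitz $\delta_M$, the norm-$\lambda$ operator $L$, and the contractive $\pi_n$; it sends $K$ into $\fbs(K_n)$ because $L$ respects the subspace and $\delta_M(K)\subseteq\fbs(K)$; and, since the limit map $c(\fbs(M_n))\to\fbs(M)$ is precisely $q_1$ and $q_1\circ L=Id$, one has $\nu_n(x)=\pi_n\big(L\delta_M(x)\big)\to q_1\big(L\delta_M(x)\big)=\delta_M(x)$ for every $x\in M$. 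This yields all the requirements of \eqref{AmPu-2}.

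For \eqref{AmPu-2} $\Rightarrow$ \eqref{AmPu-1}, I would linearize: each $\nu_n$ extends by the universal property to a linear map $\overline{\nu_n}:\fbs(M)\to\fbs(M_n)$ with $\bign{\overline{\nu_n}}=\Lip(\nu_n)\le\lambda$, and since $\nu_n(K)\subseteq\fbs(K_n)$ while $\fbs(K)=\overline{\spa}\,\delta_M(K)$, the closed subspace $\fbs(K)$ is mapped into the closed subspace $\fbs(K_n)$. Composing with the inclusion $\fbs(M_n)\hookrightarrow\fbs(M)$ produces finite-rank operators $T_n$ on $\fbs(M)$ with $\n{T_n}\le\lambda$ and $T_n(\fbs(K))\subseteq\fbs(K)$. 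On the generators $\delta_M(x)$ with $x\in\bigcup_nM_n$ one has $T_n\delta_M(x)=\nu_n(x)\to\delta_M(x)$ by \eqref{AmPu-2}(a); the uniform bound $\n{T_n}\le\lambda$ together with the density of the span of these generators in $\fbs(M)$ upgrades this to strong-operator convergence $T_n\to Id_{\fbs(M)}$. A uniformly bounded sequence of finite-rank operators leaving $\fbs(K)$ invariant and converging strongly to the identity is exactly the $\lambda$-BAP for the pair (as in the implication (ii)$\Rightarrow$(i) of Theorem \ref{thm-BAP-equivalent-to-simultaneous-splitting}), so \eqref{AmPu-1} follows.

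The two genuinely load-bearing points, which I expect to be the only places requiring care, are: first, that linearization preserves the invariance of the subspace---this is where the pair structure is used, and it rests on $\fbs(K)=\overline{\spa}\,\delta_M(K)$ together with the closedness of $\fbs(K_n)$; and second, the passage from pointwise convergence on the dense set $\bigcup_nM_n$ to strong-operator convergence on all of $\fbs(M)$, which is the standard ``uniformly bounded plus dense'' argument but must use the uniform Lipschitz bound $\lambda$ to keep the constant. Neither step is deep, but both are essential to getting the constant and the subspace right.
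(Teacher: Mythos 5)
Your proposal is correct and follows essentially the same route as the paper: the forward implication extracts the coordinate maps of the simultaneous lifting from Theorem \ref{thm-BAP-equivalent-to-simultaneous-splitting} and restricts them to $M$ via $\delta_M$, and the converse linearizes the $\nu_n$ using the universal property of the free space, checks the invariance of $\fbs(K)$ via the closed span of $\delta_M(K)$, and concludes by the standard bounded-plus-dense argument. The only cosmetic difference is that you upgrade the convergence to the strong operator topology where the paper is content with the weak operator topology; both suffice.
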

 
\begin{proof}
\eqref{AmPu-1} $\Rightarrow$ \eqref{AmPu-2}:
Consider $E_n = \fbs(M_n)$ and $F_n = \fbs(K_n)$.
Theorem \ref{thm-BAP-equivalent-to-simultaneous-splitting} implies the existence of a sequence of linear maps $T_n : \fbs(M) \to \fbs(M_n)$ such that $\lim_n \n{ T_n \mu - \mu  } = 0$ for all $\mu \in \fbs(M)$, $\n{T_n} \le \lambda$ and $T_n(\fbs(K)) \subseteq \fbs(K_n)$. Taking $\nu_n = \restr{T_n}{M}$ clearly gives \eqref{AmPu-2}.

\eqref{AmPu-2} $\Rightarrow$ \eqref{AmPu-1}
Let $S_n : \fbs(M) \to \fbs(M)$ be the linearization of $\nu_n : M \to \fbs(M)$.
Then $(S_n)$ is a sequence of finite-rank bounded linear maps $\fbs(M) \to \fbs(M)$ of norm at most $\lambda$, that converges to the identity in the weak operator topology and leaves $\fbs(K)$ invariant.
By standard arguments, the pair $\big( \fbs(M), \fbs(K) \big)$ has the $\lambda$-BAP.
\end{proof}

\bibliographystyle{amsalpha}

\def\cprime{$'$}
\providecommand{\bysame}{\leavevmode\hbox to3em{\hrulefill}\thinspace}
\providecommand{\MR}{\relax\ifhmode\unskip\space\fi MR }
% \MRhref is called by the amsart/book/proc definition of \MR.
\providecommand{\MRhref}[2]{%
  \href{http://www.ams.org/mathscinet-getitem?mr=#1}{#2}
}
\providecommand{\href}[2]{#2}

\end{document}